\newcommand{\Rq}{\textit{Remark : }}
\newtheorem{theorem}{Theorem}[section]
\newtheorem{lemme}[theorem]{Lemma}
\newtheorem{coro}[theorem]{Corollary}
\newtheorem{propo}[theorem]{Proposition}
\theoremstyle{definition}
\newtheorem{defi}[theorem]{Definition}
\DeclareMathOperator{\HKR}{HKR}
\DeclareMathOperator{\Id}{Id}
\DeclareMathOperator{\H-mod}{\text{H-}mod}
\DeclareMathOperator{\Proj}{Proj}
\DeclareMathOperator{\trace}{\mathtt{t}}
\DeclareMathOperator{\perleTot}{b}
\DeclareMathOperator{\korps}{\mathbb{k}}
\DeclareMathOperator{\categ}{\mathscr{C}}
\DeclareMathOperator{\PtInf}{p_{\infty}}
\DeclareMathOperator{\Vect}{\mathcal{V}ect}
\newcommand{\MatInter}[2]{[ {#1} \cap  {#2} ]}
\DeclareMathOperator{\Kup}{Kup}
\DeclareMathOperator{\SN}{SN}
\DeclareMathOperator{\genre}{\mathsf{g}}
\title{Chromatic spherical invariant and Hennings invariant of $3$-dimensional manifolds}
\author[1]{J. Reina}
\affil[1]{Univ Bretagne Sud, CNRS UMR 6205, LMBA, F-56000, Vannes, France, email : julie.reina@univ-ubs.fr}
\date{}
\begin{document}

\maketitle

\begin{abstract}
This paper establishes a relation between two invariants of $3$-dimensional manifolds: the chromatic spherical invariant $\mathcal{K}$ and the Hennings-Kauffman-Radford invariant $\HKR$. 
We show that, for a spherical Hopf algebra $H$, the invariant $\mathcal{K}$ associated to the pivotal category of finite-dimensional $H$-modules is equal to the invariant $\HKR$ associated to the Drinfeld double $D(H)$ of the same Hopf algebra. 
\end{abstract}

\tableofcontents

\section{Introduction}

It was long conjectured and progressively established until recently (in \cite{Turaev_Virelizier}) that there exists a connection between the Witten–Reshetikhin–Turaev invariants and the Turaev–Viro construction. 
Both of these constructions fundamentally rely on the semi-simplicity of the underlying tensor category.
The Hennings–Kauffman–Radford invariant, denoted $\HKR$, and the Kuperberg invariant, denoted $\Kup$, are often regarded as non-semisimple counterparts of the Witten–Reshetikhin–Turaev invariant and the Turaev–Viro construction, respectively. 
In the non-semisimple setting, it has been analogously conjectured that the Kuperberg invariant $\Kup$, associated to a finite-dimensional Hopf algebra $H$, is related to the invariant $\HKR$ defined using the Drinfeld double of $H$, $D(H)$. 
Since the formulation of this conjecture, several partial results have been established supporting this relation.

The work presented in this paper has been motivated by the article \cite{Chang_Cui}, in which Chang and Cui proved that, for a finite-dimensional double balanced Hopf algebra $H$ and a closed oriented $3$-manifold $M$, there exists a framing $b$ and a 2-framing $\phi$ of $M$ such that
$$ \Kup (M, b; H) = \HKR (M, \phi ; D(H) ). $$

In \cite{CGPT}, Constantino, Geer, Patureau-Mirand, and Turaev introduced the spherical chromatic invariant $\mathcal{K}$ and proved that, for any finite-dimensional involutive pivotal unimodular unibalanced Hopf algebra $H$, there is equality between the invariant $\mathcal{K}_{\H-mod}$ and the Kuperberg invariant associated to $H$ for any connected closed oriented 3-manifold.

In this article, we provide an explicit algorithm for computing the invariant of connected closed oriented $3$-manifold $\mathcal{K}_{\H-mod}$, for any finite-dimensional spherical Hopf algebra $H$. 
To this end, we introduce a new type of diagram, which we call \textit{virtual Heegaard diagrams}. 
These are a variant of classical Heegaard diagrams that allow for crossings on the $\beta$-curves.

This computational method enables us to establish the following restricted version of the conjectured relation mentioned above. 

\begin{theorem}[Main Theorem]
Let $H$ be a spherical Hopf algebra and $M$ be a $3$-dimensional closed connected oriented manifold. 
Then,
$$ \mathcal{K}_{\H-mod} (M) = \HKR_{D(H)}(M), $$
where $D(H)$ denotes the Drinfeld double of $H$ and $\H-mod$ is the pivotal category of finite-dimensional $H$-modules.
\end{theorem}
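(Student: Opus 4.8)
The plan is to compute both invariants from one and the same combinatorial datum --- a virtual Heegaard diagram $(\Sigma,\alpha,\beta)$ of $M$ --- and then to verify that the two resulting scalars of $\korps$ coincide. On the left-hand side this is essentially by construction: by the algorithm developed above, $\mathcal{K}_{\H-mod}(M)$ is the tensor contraction read off from $(\Sigma,\alpha,\beta)$, in which the points of $\alpha\cap\beta$ carry the (co)multiplication tensors of $H$, the curves prescribe the pattern of contractions, and the crossings permitted on the $\beta$-curves carry the pivotal/transposition data. The real work is therefore entirely on the right-hand side: I must reorganize the HKR evaluation of $D(H)$ so that it becomes exactly this contraction.

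First I would pass from the Heegaard diagram to a surgery presentation of $M$ in the standard way, so that $\HKR_{D(H)}(M)$ is expressed as the Hennings evaluation of a framed link whose beads are determined by the two-sided integral, the $R$-matrix and the ribbon element of $D(H)$. The decisive structural input is the factorization $D(H)\cong H^{*}\bowtie H$: its integral splits as the cointegral of $H$ tensored with the integral of $H$, its $R$-matrix is the canonical element $\sum_i (1\otimes e_i)\otimes(e^i\otimes 1)$ for a basis $\{e_i\}$ of $H$ with dual basis $\{e^i\}$, and its ribbon element is built from the pivotal element of $H$. Substituting these into the Hennings evaluation and expanding in the basis, every bead on the link splits into an $H$-part and an $H^{*}$-part.

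The heart of the argument is then a diagram-by-diagram matching. I expect the two handlebodies of the splitting to separate the two tensor factors of $D(H)$: the $H$-beads should assemble into the (co)multiplications sitting at the $\alpha\cap\beta$ intersections, while the $H^{*}$-beads should assemble, under the pairing $\langle e^i,e_i\rangle$, into the contractions running along the curves. In this picture the canonical element of the $R$-matrix is precisely what realizes a crossing on a $\beta$-curve, which explains why virtual Heegaard diagrams are the correct bookkeeping device. Carrying this out reduces to checking that each elementary piece of the decorated link (a clasp, a cap, a single crossing) is sent to the corresponding elementary piece of the virtual diagram, and that the global contraction patterns then agree.

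The main obstacle will be the scalar bookkeeping in the non-involutive spherical case. Because $S^{2}\neq\Id$ in general, the pivotal element enters the ribbon element of $D(H)$, and the framing anomaly of the Hennings construction must be reconciled with the placement of the pivotal/transposition beads at the $\beta$-crossings; a naive matching produces spurious grouplike factors together with framing corrections. Controlling these --- showing that sphericity of $H$ forces all such factors to cancel, so that neither side depends on the chosen framing or on the particular diagram beyond the topology of $M$ --- is where the delicate computation lies, and it is exactly the step at which the hypothesis that $H$ be spherical, rather than merely pivotal, is used.
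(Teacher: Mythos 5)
Your proposal follows essentially the same route as the paper: both invariants are computed from a matched pair consisting of a Heegaard diagram and the surgery link formed by its $\beta$-curves, the $D(H)$-beads are split into $H$- and $H^{*}$-parts via the canonical $R$-matrix and the integral $\Lambda\otimes\lambda$, the dual-basis pairing reassembles the $H^{*}$-parts into the coproduct of the cointegral distributed over the $\alpha\cap\beta$ crossings, and sphericity kills the framing anomaly by forcing $\mu^D(g^D\theta^D)=1$. One small correction of detail: in the chromatic algorithm the virtual $\beta$-$\beta$ crossings carry no algebraic data at all (only the $\alpha\cap\beta$ points and the caps/cups of $\beta$ do), so the $R$-matrix beads of the link correspond to the $\alpha\cap\beta$ intersections rather than to the virtual crossings.
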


The invariant $\mathcal{K}_{\categ}$ is defined for a pivotal category $\categ$, and likewise, the $\HKR$ invariant has been generalized for categories by Lyubashenko. 
Both invariants can be viewed as induced by a ribbon TQFT (see \cite{DGGPR1} and \cite{CGPV} respectively).
A similar result to this article was established in \cite{Mull_Schw_Woike_Yang} in the setting of representations of diffeomorphism groups. 
In their work, Müller, Schweigert, Woike, and Yang use string-nets to construct a modular functor $\SN_{\categ}$ from a pivotal category $\categ$ (it is suspected \cite[Rk 2.17]{Mull_Schw_Woike_Yang} that this functor extends into the TQFT of \cite{CGPV}). 
They also prove \cite{Mull_Schw_Woike_Yang} that the functor $\SN_{\categ}$ of a finite pivotal tensor category is equivalent to Lyubashenko’s modular functor associated to the Drinfeld center of that category. 
We suspect the existence of a similar relation between the $(2+1)$-TQFT of \cite{DGGPR1} and that of \cite{CGPV}.

The first part of this article establishes the conventions we will use for Hopf algebras and their Drinfeld doubles. 
We then describe computational methods for the $\HKR$ invariant and for the invariant $\mathcal{K}$.
This will include the definition of virtual Heegaard diagrams and the algorithm for computing $\mathcal{K}_{\H-mod}$ using them. 
In the final part of the paper, we state and prove our main result, establishing the relationship between the two invariants discussed before.

\vspace{15pt}

\textbf{\textit{Acknowledgments : }}
The author wants to thank Bertrand Patureau-Mirand for his guidance and advice throughout the research and writing of this article.
The author also wishes to thank François Constantino and Nathan Geer for their insights that aided in the research for this article. 
This work was partially funded by the France 2030 program, Centre Henri Lebesgue ANR-11-LABX-0020-01.

\section{Hopf algebra's definition}

\subsection{Hopf algebras and integrals}

We begin by establishing the conventions on Hopf algebras that will be used throughout this work.

Let $H = (H, \eta, m, \epsilon, \Delta)$ be a finite-dimensional Hopf algebra over a field $\korps$, with bijective antipode $S$, and unit map $\eta: \korps \to H$ defining the unit element $1_H := \eta(1) \in H$.
For every $n \in \mathbb{N}$, we denote by $\Delta^{(n)} : H \to H^{\otimes n}$ the iteration of the coproduct defined recursively by $\Delta^{(0)} = \epsilon$, $\Delta^{(1)} = \Id_H$ and $\Delta^{(n+1)} = (\Id^{\otimes (n-1)} \otimes \Delta) \circ \Delta^{(n)}$ for $n \geq 1$. 
We will always use Sweedler’s notation for the coproduct,
\begin{equation}
\Delta^{(n)}(x) = x_{(1)} \otimes \cdots \otimes x_{(n)} \label{Not:Sweedler}
\end{equation}
that hides the summation symbol.

A map $f$ will occasionally be written as $f(\varspadesuit)$, where $\varspadesuit$ denotes the argument. 
We denote by $\tau : H \otimes H \to H \otimes H$ the map $ : a \otimes b \mapsto b \otimes a$.

We recall some standard definitions on Hopf algebras (see \cite{RadHA} and \cite{GPP} for more details).

\begin{defi}
\begin{itemize}
\item A right (resp. left) \emph{integral} on $H$ is an element $\lambda \in H^*$ such that
$$ \lambda(x_{(1)})x_{(2)} = \lambda(x) 1_H \qquad (\text{resp. } x_{(1)}\lambda(x_{(2)}) = \lambda(x) 1_H), $$
for all $x \in H$.

\item A left (resp. right) \emph{cointegral} in $H$ is an element $\Lambda \in H$ such that
$$ x \Lambda = \epsilon(x) \Lambda \qquad (\text{resp. } \Lambda x  = \epsilon(x) \Lambda), $$
for all $x \in H$.

\item A \emph{group-like} element in $H$ is an element $g \in H$ satisfying $\Delta (g) = g \otimes g$ and $\epsilon(g) = 1$.
\end{itemize}
\end{defi}

It follows from the definition that a group-like element $g$ is invertible and satisfies $S(g) = g^{-1}$.

\begin{defi}

\begin{itemize}
\item Let $\Lambda$ be a nonzero left cointegral of $H$. 
There exists a group-like element $\alpha \in H^*$ determined by the relation  
$$ \Lambda v = \alpha(v) \Lambda, $$
for all $v \in H$. 
We call $\alpha$ the \emph{$H$-distinguished group-like element of $H^*$}. 

\item Let $\lambda$ be a nonzero right integral of $H$.
There exists a group-like element  $a \in H$ determined by the relation
$$ f \lambda = f(a) \lambda, $$
for all $f \in H^*$. 
We call $a$ the \emph{$H$-distinguished group-like element of $H$}. 
\end{itemize}
\end{defi}

Since $H$ is finite-dimensional, there is a unique (up to a scalar) right integral and left cointegral.
Therefore, the distinguished group-like elements $\alpha$ and $a$ are independent from the choice of $\Lambda$ and $\lambda$. 
From now on, we fix a right integral $\lambda \in H^*$ and a left cointegral $\Lambda \in H$ such that $\lambda(\Lambda)=1$.

\begin{defi}
A Hopf algebra $H$ is said to be \emph{unimodular} if it is finite-dimensional and if left cointegrals are also right cointegrals.
\end{defi}

\Rq Saying that $H$ is unimodular is equivalent to having $\alpha = \epsilon$. 
\vspace{5pt} 

\Rq \cite{RadHA}  If $H$ is unimodular, then $\lambda$ is a quantum character, i.e., $\lambda(xy) = \lambda(S^2(y)x)$ for all $x, y \in H$. 
In particular, $\lambda \circ S^2 = \lambda$. 
\vspace{5pt}

\begin{defi}
\begin{itemize}
\item A Hopf algebra $H$ with antipode $S$ is called \emph{pivotal} with pivot $g \in H$ if $g$ is a group-like element such that $ \forall x \in H, S^2(x) = gxg^{-1}$. 

\item A Hopf algebra is called \emph{spherical} if it is pivotal and unimodular (hence finite-dimensional), with its pivot $g$ satisfying $g^2 = a$. 
\end{itemize}

\end{defi}

\Rq In the spherical case, we have the equality $\lambda \circ S = \lambda (g^2 \varspadesuit)$ (direct consequence of \cite[Th. 10.5.4]{RadHA}). 
\vspace{5pt}

\begin{defi}
A \emph{symmetrized integral} in a pivotal Hopf algebra $H$ is a linear form $\mu$ satisfying:
\begin{enumerate}
\item $(\mu \otimes g)\Delta(x) = \mu(x)1_H$ for all $ x \in H$,
\item $\mu(xy) = \mu(yx)$ for all $x, y \in H$,
\item $\mu(S(x))= \mu(x)$ for all $x \in H$. 
\end{enumerate}
\end{defi}

\begin{propo}
Let $H$ be a spherical Hopf algebra and $\lambda$ a right integral. 
 Then the linear form $\mu \in H^*$ defined by $\mu := \lambda(g \varspadesuit)$ is a symmetrized integral.
\end{propo}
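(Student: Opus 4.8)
The plan is to verify the three defining properties of a symmetrized integral directly for $\mu(x) = \lambda(gx)$, drawing only on the structural facts already recorded in the excerpt: the defining identity of a right integral, the group-likeness of the pivot $g$ (so that $\Delta(g) = g\otimes g$ and $S(g) = g^{-1}$), the pivotal relation $S^2(x) = gxg^{-1}$, the quantum-character property $\lambda(xy) = \lambda(S^2(y)x)$ coming from unimodularity, and the spherical identity $\lambda\circ S = \lambda(g^2\,\varspadesuit)$. Each condition will reduce to a short computation, so the proof is essentially a matter of assembling these ingredients in the right order.

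For the integral condition (1), I would read $(\mu\otimes g)\Delta(x)$ as $\mu(x_{(1)})\,g\,x_{(2)}$ and apply the right-integral identity to the element $gx$. Since $g$ is group-like, $\Delta(gx) = (g\otimes g)\Delta(x) = gx_{(1)}\otimes gx_{(2)}$, so the right-integral property $\lambda(y_{(1)})y_{(2)} = \lambda(y)1_H$ evaluated at $y = gx$ gives $\lambda(gx_{(1)})\,gx_{(2)} = \lambda(gx)\,1_H$. Rewriting $\lambda(gx_{(1)}) = \mu(x_{(1)})$ and $\lambda(gx) = \mu(x)$ yields exactly $\mu(x_{(1)})\,g\,x_{(2)} = \mu(x)\,1_H$.

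For the trace condition (2), I compute $\mu(xy) = \lambda(gxy)$ and apply the quantum-character identity with the two arguments $gx$ and $y$, obtaining $\lambda((gx)y) = \lambda(S^2(y)\,gx)$. Using $S^2(y) = gyg^{-1}$ this becomes $\lambda(gyg^{-1}gx) = \lambda(gyx) = \mu(yx)$, so $\mu(xy) = \mu(yx)$.

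The $S$-invariance (3) is the step I expect to require the most care, since it combines the spherical identity with the trace property just established. Writing $S(xg^{-1}) = S(g^{-1})S(x) = gS(x)$, using $S(g^{-1}) = g$, I would set $y = xg^{-1}$ so that $\lambda(gS(x)) = \lambda(S(y)) = \lambda(g^2 y) = \lambda(g^2 x g^{-1})$ by the spherical relation. The right-hand side equals $\mu(gxg^{-1})$, and applying the cyclicity from (2) with the factors $gx$ and $g^{-1}$ collapses this to $\mu(x)$. Hence $\mu(S(x)) = \mu(x)$. The only genuine subtlety throughout is the bookkeeping with $S(g^{\pm 1}) = g^{\mp 1}$ and applying the spherical relation in the correct direction; once the trace property (2) is in hand, (3) follows formally.
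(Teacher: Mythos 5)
Your proof is correct and follows essentially the same route as the paper: property (1) via the right-integral identity applied to $gx$ using group-likeness of $g$, property (2) via the quantum-character identity together with $S^2(y)=gyg^{-1}$, and property (3) via $gS(x)=S(xg^{-1})$ and the spherical relation $\lambda\circ S = \lambda(g^2\,\varspadesuit)$. The only cosmetic difference is that you finish (3) by invoking the cyclicity from (2), whereas the paper collapses $\lambda(g^2xg^{-1})$ to $\lambda(gx)$ directly; these are the same computation.
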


\begin{proof}
 
For all $x \in H$, by the definition of a right integral, we have
$$ \mu(x)1_H = \lambda(gx) 1_H = \lambda((gx)_{(1)})(gx)_{(2)} = \lambda(gx_{(1)}) gx_{(2)} = (\mu \otimes g) \Delta(x). $$
Thus, equality $1.$ is verified. 

In the unimodular case, by properties of right integrals and the definition of $g$ as the pivot, for all $x, y \in H$ we have
$$ \mu(xy) = \lambda(gxy) = \lambda(S^2(y)gx) = \lambda(gyx) = \mu(yx) $$
Thus, equality $2.$ is verified. 

Since $g$ is group-like, in particular $S(g) = g^{-1}$ and $S(g^{-1}) = g$. 
Therefore, for all $x \in H$, we have
$$ \mu(S(x)) = \lambda(gS(x)) = \lambda(S(xg^{-1})) = \lambda(g^2 x g^{-1}) = \lambda(gx) = \mu(x), $$
due to the properties of right integrals in the unimodular and spherical case.
Thus, equality $3.$ is verified.  
\end{proof}

From now on, we will consider the symmetrized integral defined by $\mu:= \lambda(g \varspadesuit)$.  

\begin{defi}
A Hopf algebra $H$ is called \emph{quasitriangular} if it admits an invertible element $R \in H \otimes H$, denoted $R = \sum^n_{i=1} r_i \otimes s_i$, satisfying
\begin{enumerate}
\item $\sum^n_{i=1} \Delta(r_i) \otimes s_i = \sum^n_{i,j=1} r_i \otimes r_j \otimes s_i s_j $,
\item $\sum^n_{i=1} r_i \otimes \Delta(s_i) = \sum^n_{i,j=1} r_j r_i \otimes s_i \otimes s_j, $
\item $ \tau \circ \Delta(h) R = R \Delta(h).$
\end{enumerate}
This element $R$ is called the $R$-matrix of $H$. 
To alleviate the notations, we will sometimes leave the summation implicit and write $R = r \otimes s$ (or $R= r_i \otimes s_i$ to keep track of the pairing between $r$ and $s$ when multiple iterations of $R$ appear). 
\end{defi}

The element $u := \sum_{i=1}^n S(s_i) r_i \in H$ is called the Drinfeld element.
It is invertible and satisfies, for all $h \in H$, $S^2(h) = u h u^{-1}$ and $uS(u) = S(u)u \in Z(H)$. 

\begin{defi}
A quasitriangular Hopf algebra $H$ is called \emph{ribbon} if it admits an element $\theta \in Z(H)$, called the ribbon element, satisfying the following equations
$$ \theta^{-2} = uS(u), \qquad S(\theta) = \theta, \qquad \epsilon(\theta) = 1, \qquad \Delta(\theta) = (\sum_{i,j} s_j r_i \otimes r_j s_i)(\theta \otimes \theta). $$ 
The algebra is then pivotal with pivot element $g = u \theta$ (more details in the proof of \cite[Prop 12.3.6]{RadHA}).
\end{defi}

An equivalent definition in the case of a pivotal structure is given as follows (see \cite{GPP}). 

\begin{defi}[Equivalent definition, \cite{GPP}]
A quasitriangular pivotal Hopf algebra $H$ is called \emph{ribbon} if the element $\theta := m(\tau((g \otimes 1_H)R))$ satisfies
$$ \theta = m((1_H \otimes g^{-1})R).$$
\end{defi}

\Rq Since $(S \otimes S)(R) = R$, this definition is equivalent to requiring that $\theta$ satisfies  $S(\theta) = \theta$. 
\vspace{5pt}

\begin{propo} \label{PivotalEnrubanné}
An unimodular, quasitriangular, pivotal Hopf algebra $H$ is ribbon if and only if it is spherical.
\end{propo}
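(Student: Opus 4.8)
The plan is to run everything through the equivalent definition of ribbon stated above, together with the following reading of sphericity: since $H$ is assumed unimodular, quasitriangular and pivotal with a fixed pivot $g$, being spherical is just the single requirement $g^{2}=a$ on that pivot. By the equivalent definition and the remark following it, $H$ is ribbon if and only if $\theta:=m(\tau((g\otimes 1_H)R))=\sum_i s_i g r_i$ satisfies $S(\theta)=\theta$. So the whole statement collapses to proving that $S(\theta)=\theta$ is equivalent to $g^{2}=a$.

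First I would rewrite $\theta$ through the Drinfeld element $u=\sum_i S(s_i)r_i$. Using the standard companion formula $u^{-1}=\sum_i s_i S^{2}(r_i)$ and $S^{2}(r_i)=g r_i g^{-1}$ gives $u^{-1}g=\sum_i s_i g r_i=\theta$, so $\theta=u^{-1}g$. Because $u$ and $g$ both conjugate to $S^{2}$, their product $u^{-1}g$ conjugates to the identity, hence $\theta\in Z(H)$ and, in particular, $g$ commutes with $u$. Since $S(g)=g^{-1}$, we get $S(\theta)=S(u^{-1}g)=g^{-1}S(u)^{-1}$, so the condition $S(\theta)=\theta$ becomes $g u^{-1}=g^{-1}S(u)^{-1}$; as $g$, $u$ and $S(u)$ pairwise commute (using $uS(u)=S(u)u$ and the centrality of $\theta$), this is equivalent to $g^{2}=uS(u)^{-1}$.

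It then remains to identify the special grouplike element $uS(u)^{-1}$ with the distinguished grouplike $a$. In a finite-dimensional quasitriangular Hopf algebra $uS(u)^{-1}$ is grouplike, and Radford's analysis shows it equals $a$ up to a grouplike correction determined by the distinguished grouplike $\alpha\in H^{*}$ and $R$; unimodularity forces $\alpha=\epsilon$, so that correction is trivial and $uS(u)^{-1}=a$. Feeding this into the previous step turns the ribbon condition $g^{2}=uS(u)^{-1}$ into exactly $g^{2}=a$, i.e. sphericity, which settles both implications at once. I expect the genuine obstacle to be this last identity $uS(u)^{-1}=a$: the statement is clean but it encodes a real computation, to be handled either by citing Radford's description of the special grouplike, or self-containedly by combining the non-degeneracy of the pairing $(x,y)\mapsto\lambda(xy)$ with the unimodular form $\lambda\circ S=\lambda(a\,\varspadesuit)$ of Radford's theorem and an explicit evaluation of $\lambda$ against $u$ and $S(u)$.
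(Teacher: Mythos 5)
Your proposal is correct and follows essentially the same route as the paper: both arguments rewrite $\theta=m(\tau((g\otimes 1_H)R))$ as $u^{-1}g$ (the paper writes $u=\theta^{-1}g$), use centrality of $\theta$ to reduce the ribbon condition $S(\theta)=\theta$ to $g^2=uS(u^{-1})$, and then invoke Radford's identity $uS(u^{-1})=a$ for unimodular quasitriangular Hopf algebras (the paper cites it as \cite[Th 12.3.3.b]{RadHA}). The step you flag as the "genuine obstacle" is indeed handled in the paper simply by that citation, so no further work is needed.
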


\begin{proof}
If a Hopf algebra $H$ is unimodular, then $a = u S(u^{-1})$, with $a$ the distinguished group-like element of $H$ (see \cite[Th 12.3.3.b]{RadHA}).

Let us define $\theta := m(\tau((g \otimes 1_H)R))$. 
Then it can be seen that $u= \theta^{-1} g$, and thus, that $\theta \in Z(H)$. 

Let us assume that $H$ is ribbon. 
Since $\theta = S(\theta)$ and $g$ is a pivot for $H$, we compute 
$$a = uS(u^{-1}) = S(u^{-1})u = \theta g  \theta^{-1} g = g \theta \theta^{-1} g = g^2. $$

Thus, $H$ is spherical. 

Conversely, let us assume that $H$ is spherical. 
Then, 
\begin{align*}
& \qquad u S(u^{-1}) = \theta^{-1} g^2 S(\theta) \\
\iff &\qquad a = \theta^{-1} g^2 S(\theta) \\
\iff &\qquad \theta g^2 = g^2 S(\theta) \\
\iff &\qquad g^2 \theta = g^2 S(\theta) \\
\iff &\qquad \theta = S(\theta) 
\end{align*}

Thus, $H$ is ribbon. 
\end{proof}

\textit{Example of a spherical Hopf algebra:}

The small quantum group $U_q = \overline{U_q(\mathfrak{sl}(2))}$ (with $q$ a $2r$-th root of unity) is the algebra generated by $E, F, K, K^{-1}$ together with the relations:
$$ K^r = 1, \quad E^r = F^r = 0, \quad (r \in \mathbb{N}) $$
$$ K K^{-1} = K^{-1} K = 1, $$
$$  KE = q^2 EK,  \quad \quad KF = q^{-2} FK,$$
$$ [E,F] = \frac{K - K^{-1}}{q- q^{-1}} $$

The following theorem gives its structure as a vector space. 
(A demonstration of this theorem and more details about $U_q$ can be found in \cite{Kassel}.)

\begin{theorem}[Poincaré-Birkhoff-Witt] 
\hfill

    The set $\{ K^m E^n F^l\}_{ m,n,l \in \{0, ..., r-1\}}$ is a basis of $U_q$. 
\end{theorem}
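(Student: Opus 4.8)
The statement is a Poincaré–Birkhoff–Witt (PBW) type result, and the plan is to prove it with Bergman's diamond lemma, which simultaneously yields that the ordered monomials span $U_q$ and that they are linearly independent. The first step is to orient the defining relations into a terminating set of rewriting rules whose irreducible words are exactly the $K^m E^n F^l$ with $0 \le m,n,l \le r-1$. Reading the relations left-to-right, I would push every $K$ to the left, every $E$ to the middle, and every $F$ to the right: replace $K^{-1}$ by $K^{r-1}$; rewrite $EK \mapsto q^{-2}KE$ and $FK \mapsto q^{2} KF$; rewrite $FE \mapsto EF - \frac{K - K^{-1}}{q - q^{-1}}$; and truncate via $K^{r} \mapsto 1$, $E^{r} \mapsto 0$, $F^{r} \mapsto 0$.

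Next I would check that this reduction system is Noetherian and confluent. Termination follows from a well-chosen monomial order: the $K$-moves preserve the number of $E$'s and $F$'s while lowering an inversion count, the commutator rule strictly lowers the count of mis-ordered $(F,E)$ pairs (its correction term being shorter in $E,F$), and the truncations lower total degree. Confluence reduces, by the diamond lemma, to resolving the finitely many overlap ambiguities among the rules — for instance those occurring in $FEK$, $EK^{r}$, $F^{r}E$, and $FE^{r}$. Some resolve automatically, while others impose arithmetic constraints: matching the two reductions of $EK^{r}$ (move $E$ leftward past $K^{r}$ versus collapse $K^{r}$ to $1$) forces $q^{2r} = 1$, and the truncation overlaps force the vanishing discussed below. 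Once every ambiguity reduces to a common normal form, the diamond lemma guarantees that the irreducible monomials form a $\korps$-basis of $U_q$, which is the assertion.

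The delicate point — and the step I expect to be the main obstacle — is the compatibility of the truncations $E^{r} = 0$, $F^{r} = 0$ with the commutator relation, i.e. the resolution of the overlaps $F^{r}E$ and $FE^{r}$. A priori, commuting $E^{r}$ past $F$ could generate new nonzero terms and collapse the algebra below the expected dimension $r^{3}$. This is controlled by the identity $[E^{r}, F] = [r]_q\, E^{r-1}\,\frac{q^{r-1}K - q^{-(r-1)}K^{-1}}{q - q^{-1}}$ (and its $E \leftrightarrow F$ mirror), together with the hypothesis that $q$ is a primitive $2r$-th root of unity, which forces $q^{r} = -1$ and hence $[r]_q = \frac{q^{r} - q^{-r}}{q - q^{-1}} = 0$. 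This vanishing makes $E^{r}$ and $F^{r}$ central, so the truncations propagate consistently and every ambiguity involving them resolves; it is exactly the arithmetic input that keeps $\dim U_q = r^{3}$. As an alternative to the diamond-lemma bookkeeping, linear independence can instead be deduced from the known PBW basis of the generic algebra $U_q(\mathfrak{sl}(2))$ by a specialization-and-quotient argument, or by exhibiting a faithful family of modules (enough Verma-type modules to separate all the weights recorded by $K^m$); but the diamond lemma keeps the argument self-contained.
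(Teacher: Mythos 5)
The paper does not actually prove this statement: it is quoted from the literature, with the reader referred to Kassel's book for a demonstration, so there is no internal proof to compare against. Your diamond-lemma argument is a correct and self-contained alternative to the standard textbook route. The rewriting rules are oriented correctly (note that for the normal forms to be exactly the words $K^mE^nF^l$ you also need the elimination $K^{-1}\mapsto K^{r-1}$, which you include), the termination order you sketch works, and you correctly isolate the only nontrivial inclusion ambiguities: $EK^r$ and $FK^r$, resolved by $q^{2r}=1$, and $F^rE$, $FE^r$, resolved by the identity $[E^r,F]=[r]_q\,E^{r-1}\frac{q^{r-1}K-q^{-(r-1)}K^{-1}}{q-q^{-1}}$ together with $[r]_q=0$; for the latter, primitivity of $q$ is not actually needed — any $2r$-th root of unity with $q^2\neq 1$ gives $q^r=q^{-r}$ and hence $[r]_q=0$, which matches the paper's weaker hypothesis. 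By contrast, the reference's approach establishes the PBW basis for the generic $U_q(\mathfrak{sl}(2))$ by realizing it as an iterated Ore (skew-polynomial) extension and then passes to the restricted quotient by showing $E^r$, $F^r$, $K^r-1$ generate a well-controlled ideal (again via centrality of $E^r$, $F^r$ and $K^r$, which rests on the same vanishing $[r]_q=0$). Your version buys a single uniform argument that treats the truncations and the ordering relations on the same footing at the cost of some overlap bookkeeping, which you have correctly identified and which does resolve; the Ore-extension route buys a cleaner conceptual picture of the infinite-dimensional algebra first and defers the root-of-unity arithmetic to the quotient step.
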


The following relations give $U_q$ a structure as a Hopf algebra. 
$$ \Delta (K)= K \otimes K, \quad \Delta(K^{-1}) = K^{-1} \otimes K^{-1} $$
$$ \Delta (E) = 1 \otimes E + E \otimes K, \quad \Delta (F) = K^{-1} \otimes F + F \otimes 1, $$
$$ \epsilon(E) = \epsilon (F) = 0, \quad \epsilon(K) = \epsilon(K^{-1}) = 1, $$
$$ S(K) = K^{-1}, \quad S(K^{-1})= K, \quad S(E) = -E K^{-1}, \quad S(F) = -K F. $$

Furthermore, it can be seen that $S^2(x) = K x K^{-1}$, thus, $g = K$ can be chosen as pivot of $U_q$. 

Let $c \in \korps$ be an invertible constant, a right integral of $U_q$ is defined, on the previously mentioned basis of $U_q$, by 
    $$ \lambda(K E^{r-1} F^{r-1})  = \frac{r}{c}, \quad \text{ and } \quad \lambda(K^m E^n F^l) = 0 \text{ si }(m,n,l) \not =  (1, r-1, r-1) $$
and a cointegral $\Lambda$ is given by
$$ \Lambda = c(\frac{1}{r} \sum^{r-1}_{j=0} K^j)E^{r-1}F^{r-1}. $$
We can note that $\lambda(\Lambda) = 1$ (more details in \cite{CGHP}).

From $\lambda$, we get the symmetrized integral $\mu$, defined by
  $$ \mu(E^{r-1} F^{r-1})  = \frac{r}{c}, \quad \text{ and } \quad \mu(K^m E^n F^l) = 0 \text{ si }(m,n,l) \not =  (0, r-1, r-1).$$

\subsection{Drinfeld double}\label{PartDoubleDrinfeld}

We consider the finite-dimensional Hopf algebra $H^{*,\mathrm{cop}}$ defined from $H$ by $H^{*,\mathrm{cop}} := (H^*)^{\mathrm{cop}} = (H^*, { }^t\epsilon, { }^t\Delta, { }^t\eta, { }^tm^{\mathrm{op}})$ with antipode ${ }^t(S^{-1})$.
Here, ${ }^tm^{\mathrm{op}}$ is the map defined by $(m^{\mathrm{op}}(f))(x \otimes y) = f(yx),$ for all $f \in H^*$ and all $x, y \in H$. 
The map ${ }^t \Delta : H^* \otimes H^* \to H^*$ is defined through the identification $(H \otimes H)^*=H^* \otimes H^*$ by $({ }^t \Delta(f \otimes g))(x) = f(x_{(1)}) \otimes g(x_{(2)})$, for all $x \in H$ and all $f, g \in H^*$. 

\emph{Structure on $D(H)$ : }
Following the conventions of \cite{Chang_Cui} and \cite{RadHA}, the Drinfeld double $D(H) = H^{*\mathrm{cop}} \otimes H$ of a finite-dimensional Hopf algebra $H$ is a quasitriangular Hopf algebra with the structure given below.

For all $f \in (H^*)^{\mathrm{cop}}$ and all $v \in H$, we have

$$ 1^D = \epsilon \otimes 1_H, \qquad \epsilon^D = 1_H \otimes \epsilon, \qquad \Delta^D = (f_{(1)} \otimes x_{(1)}) \otimes (f_{(2)} \otimes x_{(2)}), $$
$$ M^D(f \otimes v, f' \otimes v') = (f f'(S^{-1}(v_{(3)}) \varspadesuit v_{(1)}) \circ \Delta \otimes v_{(2)}v',$$
$$ S^D(f \otimes v) = M^D(\epsilon \otimes S(v) , f \circ S^{-1} \otimes 1) = f \circ S^{-1}(v_{(3)} \varspadesuit S(v_{(1)})) \otimes S(v_{(2)}).$$

Let $(e_1, ..., e_n)$ be a basis of $H$ and $(e^*_1, ..., e^*_n)$ the dual basis of $H^{*,\mathrm{cop}}$, we can make explicit the $R$-matrix of $D(H)$ by
$$ R^D := \sum^n_{i = 1} (\epsilon \otimes e_i) \otimes (e^*_i \otimes 1_H) $$
and its inverse by 
$$ (R^D)^{-1} := \sum^n_{i = 1} (\epsilon \otimes S(e_i)) \otimes (e^*_i \otimes 1_H). $$

From now on, we denote the multiplication in the double by $(f \otimes v)( f' \otimes v') := M^D(f \otimes v, f' \otimes v')$. 
It can be noted, by direct calculation, that $(f \otimes 1_H)( f' \otimes v') = f f' \otimes v'$ and $(f \otimes v)( \epsilon \otimes v') = f \otimes v v'$, for all $(f \otimes v), ( f' \otimes v') \in D(H)$.

\begin{propo}\cite[Th 13.2.1]{RadHA} 
The Drinfeld double $D(H) = H^{* \mathrm{cop}} \otimes H$ of a finite-dimensional Hopf algebra $H$ is an unimodular Hopf algebra (for it is both quasitriangular and factorizable).
\end{propo}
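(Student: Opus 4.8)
The plan is to establish unimodularity of $D(H)$ by exhibiting a single nonzero element that is simultaneously a left and a right cointegral; by definition this is exactly what unimodularity asks, and it is equivalent to showing that the distinguished group-like element $\alpha^D \in D(H)^*$ of the double coincides with its counit $\epsilon^D$. Two routes present themselves. The abstract route realizes the parenthetical remark: one first checks that the monodromy element $\tau(R^D)R^D$ induces a bijective Drinfeld map $D(H)^* \to D(H)$ (factorizability, which is immediate from the explicit $R^D = \sum_i (\epsilon \otimes e_i)\otimes(e^*_i \otimes 1_H)$ of Section~\ref{PartDoubleDrinfeld} together with nondegeneracy of the evaluation pairing between $H$ and $H^*$), and then invokes the general fact that a factorizable Hopf algebra is unimodular \cite{RadHA}. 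I would, however, prefer the concrete route, which produces the cointegral explicitly and stays entirely within the conventions already fixed for $D(H)$.

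For the concrete route the candidate two-sided cointegral is
$$ \Lambda^D := \lambda \otimes \Lambda \in H^* \otimes H = D(H), $$
where $\lambda$ is the fixed right integral on $H$ and $\Lambda$ the fixed left cointegral in $H$. Since the elements $(f \otimes 1_H)$ and $(\epsilon \otimes v)$ generate $D(H)$ — indeed $f \otimes v = (f \otimes 1_H)(\epsilon \otimes v)$ — and since their products obey the simple rules $(f \otimes 1_H)(f' \otimes v') = ff' \otimes v'$ and $(f \otimes v)(\epsilon \otimes v') = f \otimes vv'$, it suffices to verify the identities $x\,\Lambda^D = \epsilon^D(x)\,\Lambda^D$ and $\Lambda^D\,x = \epsilon^D(x)\,\Lambda^D$ for $x$ of these two types. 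The $H$-slot will be governed by the left-cointegral relation $v\Lambda = \epsilon(v)\Lambda$, and the $H^*$-slot by the right-integral relation $\lambda(v_{(1)})v_{(2)} = \lambda(v)1_H$ (equivalently $\lambda * f = f(1_H)\lambda$ for the convolution product on $H^*$).

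The heart of the computation is the mixed term $(\epsilon \otimes v)\,\Lambda^D = M^D(\epsilon \otimes v,\ \lambda \otimes \Lambda)$, in which the twisted multiplication braids $v$ past $\lambda$ through the expression $\lambda\bigl(S^{-1}(v_{(3)})\,\varspadesuit\, v_{(1)}\bigr)\circ\Delta$ while leaving $v_{(2)}\Lambda$ in the $H$-slot. There the cointegral relation collapses $v_{(2)}\Lambda = \epsilon(v_{(2)})\Lambda$, and the remaining convolution in $H^*$ is then resolved using that $\lambda$ is a right integral. The only genuine subtlety is that the distinguished group-like elements $a \in H$ and $\alpha \in H^*$ — which measure the failure of $H$ and of $H^*$ to be unimodular — reappear during these reductions; the design of the double's multiplication is precisely that the contribution of $a$ cancels against that of $\alpha$, leaving $\epsilon^D$ rather than a nontrivial modular character. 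I would then run the symmetric computation for $\Lambda^D(\epsilon \otimes v)$ and for the $H^*$-type generators $(f \otimes 1_H)$ to confirm both the left and the right cointegral identities with the same scalar.

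The main obstacle is bookkeeping rather than conceptual content: one must carry the threefold coproduct $\Delta^{(3)}$ that appears in $M^D$ (and in $S^D$), respect the co-opposite convention on $H^{*\mathrm{cop}}$, and use exactly the correct one-sided objects (right integral $\lambda$, left cointegral $\Lambda$) so that $a$ and $\alpha$ cancel instead of accumulating. Making that cancellation explicit — equivalently, checking directly that $\alpha^D = \epsilon^D$ — is the step where the quasitriangular and factorizable structure of $D(H)$ is really doing the work, and it is where I would concentrate the effort.
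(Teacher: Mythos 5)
The paper does not actually prove this proposition — it is quoted verbatim from \cite[Th 13.2.1]{RadHA} — so there is no in-paper argument to compare yours against. Your overall strategy, exhibiting an explicit two-sided cointegral of the form (integral on $H$) $\otimes$ (cointegral in $H$) and checking the defining identities on the two types of generators $f\otimes 1_H$ and $\epsilon\otimes v$, is exactly how the cited reference proceeds, and the abstract alternative (factorizable $\Rightarrow$ unimodular) is also legitimate. The issue is with the specific candidate and with the fact that the decisive computations are announced rather than carried out.

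Concretely, with the paper's conventions your verification fails already at the first generator type. One has $(f\otimes 1_H)(\lambda\otimes\Lambda)=f\lambda\otimes\Lambda$, where $f\lambda$ is the convolution product in $H^{*\mathrm{cop}}$, i.e.\ $(f\lambda)(x)=f(x_{(1)})\lambda(x_{(2)})$. Since you take $\lambda$ to be the fixed \emph{right} integral, the paper's own definition of the distinguished group-like element $a$ gives $f\lambda=f(a)\lambda$, hence $(f\otimes 1_H)\Lambda^D=f(a)\,\Lambda^D$ rather than $\epsilon^D(f\otimes 1_H)\Lambda^D=f(1_H)\,\Lambda^D$. There is no "mixed term" here for $\alpha$ to cancel against: the cancellation mechanism you invoke simply does not operate on generators of the form $f\otimes 1_H$, so whenever $a\neq 1_H$ your candidate is not a left cointegral of $D(H)$ at all. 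The first tensor slot must carry a \emph{left} integral on $H$ (for instance $\lambda\circ S$), for which $f\lambda=f(1_H)\lambda$ does hold; and even then the genuinely delicate part — the Radford-type identity $\lambda\bigl(S^{-1}(v_{(2)})\,x\,v_{(1)}\bigr)=\epsilon(v)\lambda(x)$ needed for the mixed generators $\epsilon\otimes v$, together with the symmetric right-multiplication checks — still has to be proved rather than asserted. As it stands the proposal is a reasonable plan built on a wrong candidate, with the steps where the statement could actually fail left unexamined.
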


As with the structure maps of $D(H)$, we can define a right integral and a $D(H)$-distinguished group-like element based on those previously established on $H$, 
$$ \lambda^D := \Lambda \otimes \lambda \qquad \text{ and } \qquad a^D = \alpha \otimes a,$$
(see \cite[Part. 2.4]{Chang_Cui} and \cite[Part 13.7]{RadHA}).

\begin{lemme}\cite[Lemma 13.7.2]{RadHA}\label{Lemmeg_D}
Let $H$ be a finite-dimensional Hopf algebra over a field $\korps$, and let $a$ and $\alpha$ be the $H$-distinguished and $H^*$-distinguished group-like elements, respectively. 
Then, for any pair $(\beta , b) \in H^* \times H$ such that
\begin{itemize}
\item $\beta$ and $b$ are group-like elements, 
\item $\beta^{-2} = \alpha$, $b^{-2} = a$, and 
\item $S^2(h)= b^{-1}(\beta(h_{(1)})h_{(2)} \beta^{-1}(h_{(3)}))b $, for all $h \in H$,
\end{itemize} 
the element $(\beta \otimes b)^{-1}$ is a pivot of $D(H)$.  
\end{lemme}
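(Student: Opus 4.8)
The plan is to verify directly that $p := (\beta \otimes b)^{-1}$ satisfies the two defining conditions of a pivot of $D(H)$: that $p$ is a group-like element, and that $(S^D)^2(z) = p\,z\,p^{-1} = (\beta \otimes b)^{-1} z (\beta \otimes b)$ for every $z \in D(H)$. For the group-like claim, since the group-like elements form a group it suffices to show $\beta \otimes b$ is group-like, for then so is its inverse $p$. A character $\beta$ of $H$ is group-like in $H^{*,\mathrm{cop}}$ (the opposite comultiplication is irrelevant, as $\beta \otimes \beta$ is symmetric) and $b$ is group-like in $H$ by hypothesis; feeding the pair $(\beta,b)$ into $\Delta^D(f \otimes v) = (f_{(1)} \otimes v_{(1)}) \otimes (f_{(2)} \otimes v_{(2)})$ then yields $\Delta^D(\beta \otimes b) = (\beta \otimes b) \otimes (\beta \otimes b)$, while $\epsilon^D(\beta \otimes b) = \beta(1_H)\epsilon(b) = 1$.

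For the conjugation identity, I would reduce to generators. Both $(S^D)^2$, being the square of an anti-homomorphism, and conjugation by $\beta \otimes b$ are algebra endomorphisms of $D(H)$, so it is enough to check their equality on a generating set. As $D(H)$ is generated by the two subalgebras $\epsilon \otimes H$ and $H^* \otimes 1_H$ (indeed $f \otimes v = (f \otimes 1_H)(\epsilon \otimes v)$), I would verify the identity separately on elements $\epsilon \otimes v$ and on elements $f \otimes 1_H$. The factorization $\beta \otimes b = (\beta \otimes 1_H)(\epsilon \otimes b)$ also splits the conjugation into one conjugation inside the copy of $H$ (by $b$) and one cross-conjugation by $\beta \otimes 1_H$.

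On $\epsilon \otimes H$, a short computation from the given formula for $S^D$ gives $S^D(\epsilon \otimes v) = \epsilon \otimes S(v)$, hence $(S^D)^2(\epsilon \otimes v) = \epsilon \otimes S^2(v)$. I would then evaluate $(\beta \otimes b)^{-1}(\epsilon \otimes v)(\beta \otimes b)$ via $M^D$: conjugation by $\epsilon \otimes b$ returns $\epsilon \otimes b^{-1} v b$ (the inclusion $H \hookrightarrow D(H)$ being an algebra map), and the cross-conjugation by $\beta \otimes 1_H$ produces exactly $\epsilon \otimes \bigl(\beta(v_{(1)})\, v_{(2)}\, \beta^{-1}(v_{(3)})\bigr)$. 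Hypothesis $3$, namely $S^2(v) = b^{-1}\bigl(\beta(v_{(1)})\,v_{(2)}\,\beta^{-1}(v_{(3)})\bigr)b$, is precisely what makes the two sides coincide.

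On $H^* \otimes 1_H$, the analogous computation gives $S^D(f \otimes 1_H) = f \circ S^{-1} \otimes 1_H$, so $(S^D)^2(f \otimes 1_H) = f \circ S^{-2} \otimes 1_H$, and it remains to show the conjugate $(\beta \otimes b)^{-1}(f \otimes 1_H)(\beta \otimes b)$ equals $f \circ S^{-2} \otimes 1_H$. This is the dual counterpart of Hypothesis $3$ — an expression of $S^{-2}$ on $H^*$ as conjugation by $\beta$ twisted by the action of $b$ — and I expect it to be the main obstacle: it does not follow from Hypothesis $3$ alone, but requires Radford's formula relating $S^{-2}$ on $H^*$ to the distinguished group-likes $\alpha$ and $a$, combined with the square-root conditions $\beta^{-2} = \alpha$ and $b^{-2} = a$ to rewrite the result in terms of $\beta$ and $b$. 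The residual difficulty is purely computational, namely carrying the coproduct-heavy multiplication $M^D$ through the cross-conjugations without error.
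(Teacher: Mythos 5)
The paper does not prove this lemma at all --- it is imported verbatim as \cite[Lemma 13.7.2]{RadHA} --- so there is no internal proof to compare against; your proposal must be judged on its own. Your strategy is sound: $\beta\otimes b$ is group-like in $D(H)$ exactly as you say, and since $(S^D)^2$ and conjugation by $(\beta\otimes b)^{-1}=\beta^{-1}\otimes b^{-1}$ are both algebra endomorphisms, checking them on the generating subalgebras $\epsilon\otimes H$ and $H^*\otimes 1_H$ suffices; the computation on $\epsilon\otimes H$ goes through exactly as you describe and lands on Hypothesis $3$. The one point where your assessment is off is the case $H^*\otimes 1_H$: carrying $M^D$ through the cross-conjugation gives $(\beta\otimes b)^{-1}(f\otimes 1_H)(\beta\otimes b) = \bigl(x\mapsto \beta^{-1}(x_{(1)})\,f(b\,x_{(2)}\,b^{-1})\,\beta(x_{(3)})\bigr)\otimes 1_H = f\bigl(b\,(\beta^{-1}(x_{(1)})\,x_{(2)}\,\beta(x_{(3)}))\,b^{-1}\bigr)\otimes 1_H$, and the map $x\mapsto b(\beta^{-1}(x_{(1)})x_{(2)}\beta(x_{(3)}))b^{-1}$ is precisely the \emph{inverse} of the automorphism $\Phi(h)=b^{-1}(\beta(h_{(1)})h_{(2)}\beta^{-1}(h_{(3)}))b$ appearing in Hypothesis $3$; since $\Phi=S^2$, this is $S^{-2}$, so the conjugate equals $f\circ S^{-2}\otimes 1_H=(S^D)^2(f\otimes 1_H)$. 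No appeal to Radford's $S^4$ formula is needed, and indeed that formula could not replace Hypothesis $3$ here, since degree-$4$ information plus the square-root conditions does not by itself pin down a degree-$2$ formula. The conditions $\beta^{-2}=\alpha$ and $b^{-2}=a$ play no role in the pivot property itself; they are consistency conditions (compatible with Radford's $S^4$ formula) that the paper uses afterwards, in Proposition \ref{g_D}, to get $(g^D)^2=a^D$ and hence sphericality of $D(H)$.
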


When $H$ is spherical (and so, unimodular), we have $\alpha= \epsilon$ and $g^2 = a$. 
Therefore, the following proposition can be deduced from the lemma. 

\begin{propo}\label{g_D}
Let $H$ be a spherical Hopf algebra with $g$ as its pivot. 
Then its Drinfeld double $D(H)$ is also spherical with $g^D:= \epsilon \otimes g$ as its pivot. 
\end{propo}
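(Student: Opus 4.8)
The plan is to apply Lemma~\ref{Lemmeg_D} with the specific choice $\beta = \epsilon$ and $b = g^{-1}$, and then to verify the spherical condition $(g^D)^2 = a^D$ directly. First I would check that the pair $(\epsilon, g^{-1})$ satisfies the three hypotheses of the lemma. Since $\epsilon$ is the unit of $H^*$ it is group-like, and $g^{-1}$ is group-like because $g$ is, which gives the first bullet. For the second bullet, unimodularity of $H$ gives $\alpha = \epsilon$, so $\beta^{-2} = \epsilon = \alpha$, while sphericity gives $a = g^2$, hence $b^{-2} = (g^{-1})^{-2} = g^2 = a$. For the third bullet, taking $\beta = \epsilon$ collapses $\beta(h_{(1)})\, h_{(2)}\, \beta^{-1}(h_{(3)})$ to $h$ by the counit axiom, so the right-hand side reduces to $g\, h\, g^{-1}$, which equals $S^2(h)$ precisely because $g$ is the pivot of $H$. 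Thus the lemma applies and $(\epsilon \otimes g^{-1})^{-1}$ is a pivot of $D(H)$.

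Next I would identify this pivot explicitly. Using the multiplication rule $(f \otimes v)(\epsilon \otimes v') = f \otimes v v'$ recorded just after the definition of $M^D$, one computes $(\epsilon \otimes g^{-1})(\epsilon \otimes g) = \epsilon \otimes 1_H = 1^D$, so the inverse is $\epsilon \otimes g = g^D$, as claimed. To conclude that $D(H)$ is spherical it then remains to note unimodularity and to verify $(g^D)^2 = a^D$. Unimodularity of $D(H)$ is already available from \cite[Th 13.2.1]{RadHA}. For the square of the pivot, the same multiplication rule gives $(g^D)^2 = (\epsilon \otimes g)(\epsilon \otimes g) = \epsilon \otimes g^2$, whereas the distinguished group-like element of $D(H)$ is $a^D = \alpha \otimes a = \epsilon \otimes g^2$, using $\alpha = \epsilon$ and $a = g^2$ once more. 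The two agree, so $(g^D)^2 = a^D$ and $D(H)$ is spherical with pivot $g^D = \epsilon \otimes g$.

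I would expect no serious obstacle here: every step is a direct consequence of Lemma~\ref{Lemmeg_D} together with the unimodular and spherical identities $\alpha = \epsilon$ and $a = g^2$. The only point requiring a little care is the reduction of the third hypothesis of the lemma, where one must recognize that the $\beta$-twisted expression degenerates to the ordinary conjugation $g\,\varspadesuit\,g^{-1}$ once $\beta = \epsilon$, so that the hypothesis becomes exactly the pivot relation $S^2(\varspadesuit) = g\,\varspadesuit\,g^{-1}$ for $H$.
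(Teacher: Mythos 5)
Your proposal is correct and follows essentially the same route as the paper's own proof: both apply Lemma~\ref{Lemmeg_D} to the pair $(\epsilon, g^{-1})$, verify its three hypotheses via $\alpha=\epsilon$, $a=g^2$, and the pivot relation, compute $(\epsilon\otimes g^{-1})^{-1}=\epsilon\otimes g$ from the multiplication rule, and conclude with $(g^D)^2=\epsilon\otimes g^2=a^D$. No gaps.
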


\begin{proof}
First, let us show that the pair $(\epsilon, g^{-1})$ satisfies the assumptions of Lemma \ref{Lemmeg_D}. 

By definition, $g$ and $\epsilon$ are group-like, therefore, $g^{-1}$ also is. 
Since $H$ is spherical and unimodular, we have that $\alpha= \epsilon$ and $g^2 = a$. 

It remains to verify that $S^2(h)= g(\epsilon^{-1}(h_{(1)})h_{(2)} \epsilon(h_{(3)}))g^{-1} $ for all $h \in H$.
However, by the definition of $\epsilon$ and of the pivot $g$, we have
$$ g(\epsilon^{-1}(h_{(1)})h_{(2)} \epsilon(h_{(3)}))g^{-1} = ghg^{-1} = S^2(h),$$ 
for all $h \in H$.
Therefore, $g^D=(\epsilon \otimes g^{-1})^{-1}$ can be set as pivot of $D(H)$.

As mentioned earlier, by direct calculation we have $(\epsilon \otimes g^{-1}) (\epsilon \otimes g) = \epsilon \otimes 1_H$. 
Therefore $(\epsilon \otimes g^{-1})^{-1} = \epsilon \otimes g$. 

Furthermore, since $H$ is spherical, we note that 
$$ (g^D)^2 = (\epsilon \otimes g)(\epsilon \otimes g) = \epsilon \otimes g^2 = \epsilon \otimes a = a^D. $$ 
Thus, $D(H)$ is also spherical.
\end{proof}

We can argue similarly that, if a pair $(\beta , b) \in H^* \times H$ as described in Lemma \ref{Lemmeg_D} exists, then $D(H)$ with the pivot $\beta^{-1} \otimes b^{-1}$ is spherical. 

\vspace{5pt}
\Rq Proposition \ref{g_D} allows us to express a symmetrized integral of $D(H)$ through elements of $H$ and $H^*$. 
$$ \mu^D = \lambda^D(g^D \varspadesuit) = \Lambda \otimes \mu $$

\Rq Since the Drinfeld double of a Hopf algebra is unimodular and quasitriangular (see \cite[Th 13.2.1]{RadHA}), Propositions \ref{PivotalEnrubanné} and \ref{g_D} imply that the double of a spherical Hopf algebra is also ribbon.

\section{Computation of the $\HKR$ invariant}\label{Part:CalcHenn}

Let $M$ be a $3$-dimensional manifold and $L$ an oriented surgery link of $M$. 
Let $H$ be a finite-dimensional ribbon Hopf algebra with pivot $g$, and $\mu$ a symmetrized non-degenerate integral of $H$ (i.e. $\mu(g \theta)\mu(g^{-1} \theta^{-1}) \not = 0 \in \korps$). 
We denote $\delta := \mu ( g \theta) \in \korps$. 
In the rest of this article, we assume that $\mu$ is renormalized such that $\mu(g \theta)\mu(g^{-1} \theta^{-1}) =1 \in \korps$. 
Therefore, we will use the $\HKR$ invariant with the computational algorithm described in \cite{GPP} that we express as follows. 

\begin{itemize}
\item Let $D$ be a planar diagram of $L$ with each component admitting a base point.
For each cap, cup, and crossing, we place on $D$ the beads described in Figure \ref{fig:FoncPerleGPP}.
Each bead is colored by an element of the algebra $H$. 
Furthermore, we denote $R = r \otimes s$.
\begin{figure}[hbtp]
\centering
\includegraphics[scale=0.7]{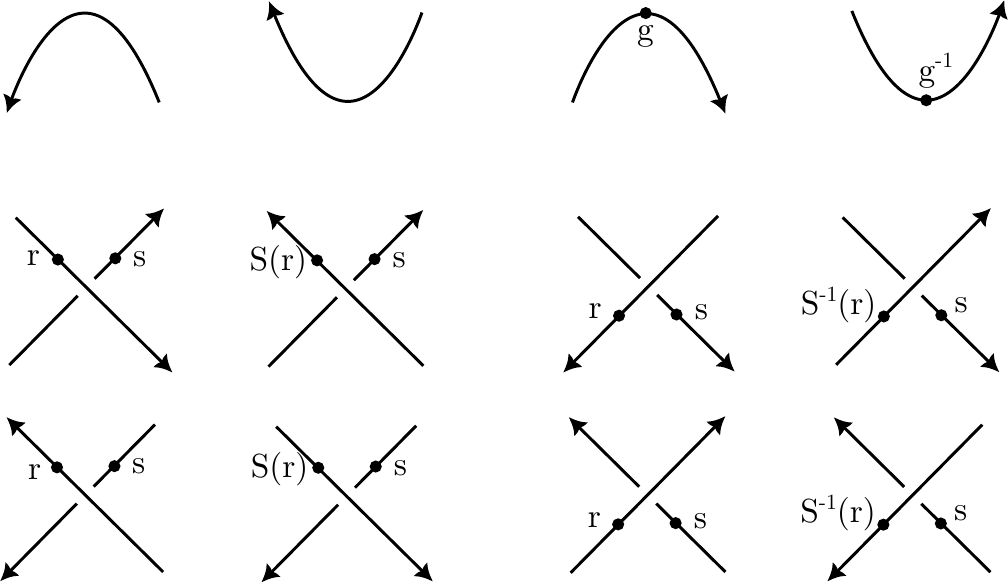}
\caption{Beads placement (according to the conventions of \cite{GPP}) }\label{fig:FoncPerleGPP}
\end{figure}

\item Starting from the base point and following the orientation, we multiply all beads in one element, which gives an element of $H$ for each component. 
(All multiplications are performed from the left.) 
This produces an element $PT(D) \in H^{\otimes n}$ (with $n$ the number of components of $L$) which we call the total bead of $D$ (at this point, it is not an invariant of the diagram).

\item The $\HKR$ invariant is given by the following theorem. 
\end{itemize}

\begin{theorem} \cite[Th 2.9]{GPP}
Let $H$ be a finite-dimensional Hopf algebra. 
For each manifold $M$, the scalar $ \HKR_H(M) $ is a manifold invariant defined as 
$$ \HKR_H(M) := \delta^{-s} \mu^{\otimes n}(PT(D)) \in \korps $$
where $D$ is a diagram of a surgery link of $M$, $s \in \mathbb{Z}$ is the signature of the linking matrix of $L$ and $n$ is the number of components of $L$. 

\end{theorem}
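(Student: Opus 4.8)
The plan is to prove that the scalar $\delta^{-s}\,\mu^{\otimes n}(PT(D))$ depends only on the homeomorphism type of $M$, and neither on the chosen surgery link $L$ nor on its planar diagram $D$. By the Kirby calculus in the Fenn--Rourke form, two framed links produce orientation-preservingly homeomorphic surgery manifolds exactly when they are related by a finite sequence of (i) ambient isotopies of the diagram, (ii) handle slides, and (iii) stabilizations by a $\pm 1$-framed unknot. It therefore suffices to verify that $\delta^{-s}\,\mu^{\otimes n}(PT(D))$ is unchanged under each of these operations, together with invariance under the choice of base points on the components.

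First I would dispose of the diagrammatic invariances. The assignment $D \mapsto PT(D)$ is designed so that the beads of Figure \ref{fig:FoncPerleGPP} behave like the structure morphisms of the ribbon category of $H$-modules: the cap and cup beads encode the pivot $g$ and its inverse, while the crossing beads encode the $R$-matrix $R = r \otimes s$. Reidemeister II and III then reduce to the defining axioms of the $R$-matrix (its compatibility with $\Delta$ and the Yang--Baxter relation that follows from them), and Reidemeister I introduces precisely a factor of the ribbon element $\theta^{\pm 1}$, which is recorded by the framing and hence by $s$. Invariance under moving a base point along a component follows from the trace property $\mu(xy) = \mu(yx)$ of the symmetrized integral, since relocating the base point cyclically permutes the ordered product of beads on that component.

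The main obstacle is handle-slide invariance. When one component is slid over another, the portion of the first strand running parallel to the second is doubled, and this doubling is governed algebraically by the coproduct $\Delta$ together with conjugation by copies of the $R$-matrix picked up at the crossings. The essential input is the twisted cointegral identity $(\mu \otimes g)\Delta(x) = \mu(x)1_H$ satisfied by the symmetrized integral: it lets one collapse the slid portion, replacing a sum indexed by $\Delta(x)$ by the single scalar $\mu(x)$ times the unit, so that the bead configuration after the slide evaluates to the same element as before. Here the property $\mu(S(x)) = \mu(x)$ reconciles the two possible orientations of the slid strand, while the centrality of $\theta$ (equivalently, the fact that $g = u\theta$ is a genuine pivot) guarantees that the pivotal beads commute past the doubled strand in the required way. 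Tracking every $R$-matrix bead crossed by the slid strand while performing this collapse is the technical heart of the argument.

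Finally, for stabilization I would compute the contribution of an isolated $\pm 1$-framed unknot. A positively framed meridian closes up to $\mu(g\theta) = \delta$, and a negatively framed one to $\mu(g^{-1}\theta^{-1}) = \delta^{-1}$, using the normalization $\mu(g\theta)\mu(g^{-1}\theta^{-1}) = 1$; so adjoining such a component multiplies $\mu^{\otimes n}(PT(D))$ by $\delta^{\pm 1}$. At the same time the signature $s$ of the linking matrix changes by $\pm 1$, so the prefactor $\delta^{-s}$ absorbs exactly this factor and the total scalar is preserved. Combining the diagrammatic invariance, the handle-slide invariance, and this stabilization computation establishes that $\HKR_H(M)$ is independent of all choices, which is the assertion of the theorem.
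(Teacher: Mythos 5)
You should first be aware that the paper does not prove this statement at all: it is quoted from \cite{GPP} and the text explicitly defers the proof there (``A proof can be found in [GPP, Th 2.9] considering $G=\{0\}$''), ultimately going back to Hennings and Kauffman--Radford. So there is no in-paper argument to compare against; your attempt has to be measured against the standard proof in the literature. Against that standard your strategy is the right one: reduce to invariance under Kirby moves and planar isotopy; use $\mu(xy)=\mu(yx)$ for independence of the base point; the quasitriangularity axioms for Reidemeister II and III; the ribbon element and the framing for Reidemeister I; the identity $\mu(x_{(1)})\,g\,x_{(2)}=\mu(x)1_H$ for handle slides; and the normalization $\mu(g\theta)\mu(g^{-1}\theta^{-1})=1$ together with the change of signature for stabilization. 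The stabilization computation and the bookkeeping of the prefactor $\delta^{-s}$ are correct as you present them.

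The genuine gap is the handle-slide step, which you yourself label ``the technical heart'' and then do not carry out. What is missing is the actual mechanism: after the band sum of $L_i$ with a push-off of $L_j$, one must show that the total bead of the new diagram is obtained from the old one by applying the coproduct to the bead of the slid-over component, with the factors distributed between $L_j$ and its parallel copy; this requires propagating $\Delta$ through every $R$-matrix bead via $(\Delta\otimes\mathrm{id})(R)=R_{13}R_{23}$ and $(\mathrm{id}\otimes\Delta)(R)=R_{13}R_{12}$, and absorbing the extra $g^{\pm1}$ beads from the new extrema using $S^{2}(x)=gxg^{-1}$; only after that reorganization does $\mu(x_{(1)})\,g\,x_{(2)}=\mu(x)1_H$ collapse the parallel copy. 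As written, the claim that ``the bead configuration after the slide evaluates to the same element as before'' is an assertion, not a proof. A second, smaller omission: Kirby's theorem concerns unoriented framed links, whereas the bead assignment depends on the orientation of each component, so you also need invariance under orientation reversal of a single component (this is where $\mu\circ S=\mu$ and $(S\otimes S)(R)=R$ are genuinely used); your sketch only gestures at this inside the handle-slide discussion rather than establishing it as a separate required move.
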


This theorem provides a reformulation of the 3-manifold invariant introduced by Hennings \cite{Hen96} and by Kauffman and Radford \cite{KR95}. 
A proof can be found in \cite[Th 2.9]{GPP} considering $G = \{ 0 \}$.  

\Rq The beads on the rightmost crossing in the Figure \ref{fig:FoncPerleGPP} are denoted $ r_i$ and $S(s_i)$ instead of $S^{-1}(r_i)$ and $s_i$ as in other references (see \cite[Part. 2.2]{GPP}).
Both conventions are equivalent because $(S \otimes S) (R) = R$. 
\vspace{5pt}

\section{The chromatic spherical invariant $\mathcal{K}$}

In this section, we fix $H$ to be a spherical Hopf algebra over a field $\korps$. 

\subsection{Virtual Heegaard diagrams}

Next, we will define what we will call \emph{virtual Heegaard diagrams}. 
Roughly speaking, these are Heegaard diagrams allowing intersection points over the $\beta$ curves. 

We assume that all mentioned curve are oriented. 

\begin{defi}[Heegaard diagrams]
We call \emph{Heegaard diagram} a triplet $D =(\Sigma_{\genre}, \alpha, \beta)$ such that

\begin{itemize}
\item $\Sigma_{\genre}$ is a closed oriented surface of genus ${\genre} \in \mathbb{N}$. 
\item $\alpha = (\alpha_1, ..., \alpha_{\genre})$ is a collection of ${\genre}$ simple closed disjoint curves.
\item $\beta = (\beta_1, ..., \beta_{\genre})$ is a collection of ${\genre}$ simple closed disjoint curves.
\item The complement of $\alpha$ and the complement of $\beta$ in $\Sigma_{\genre}$ are connected. 
\item Every intersection between two curves is transverse. 
\end{itemize}
\end{defi}

In this article, we choose to consider the curves in a Heegaard diagram as oriented, although this is not usually the case in the literature on Heegaard diagrams. 

Every Heegaard diagram defines a $3$-manifold by gluing together two handlebodies determined by the sets of curves $\alpha$ and $\beta$, with boundary the surface $\Sigma_{\genre}$ (more details in \cite[Part 2]{Ozsvath_Szabo}).  
Our interest in these diagrams stems from the following classical result (see \cite[Theorem 2.1 and Remark 2.6]{Ozsvath_Szabo}).

\begin{theorem} 
Every oriented closed $3$-manifold admits a Heegaard diagram.
\end{theorem}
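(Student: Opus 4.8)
The plan is to obtain a Heegaard diagram from a Heegaard \emph{splitting} of $M$, whose existence is the substantive input. First I would invoke the fact that every closed oriented $3$-manifold is triangulable (Moise's theorem) --- or, working smoothly, that it admits a Morse function --- and hence carries a handle decomposition. A standard rearrangement argument (reordering handles by index, together with handle slides and cancellations) lets me assume that all handles of index $0$ and $1$ are attached before all handles of index $2$ and $3$. The union $V_\alpha$ of the $0$- and $1$-handles is then a handlebody of some genus $\genre$, its complement $V_\beta$ (the union of the $2$- and $3$-handles, viewed upside down) is another handlebody of the same genus, and the two meet along a common boundary surface $\Sigma_{\genre}$. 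This is a Heegaard splitting $M = V_\alpha \cup_{\Sigma_{\genre}} V_\beta$.

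Next I would extract the two systems of curves. In a genus-$\genre$ handlebody one can choose a complete system of $\genre$ meridian disks whose boundaries are disjoint simple closed curves and whose union cuts the handlebody into a single $3$-ball. Applying this to $V_\alpha$ yields $\alpha = (\alpha_1, \dots, \alpha_{\genre})$ and to $V_\beta$ yields $\beta = (\beta_1, \dots, \beta_{\genre})$, both collections lying on $\Sigma_{\genre}$. The $\alpha$-curves are pairwise disjoint, and likewise the $\beta$-curves, since within each system they bound disjoint disks; after a small isotopy I may assume that every intersection between an $\alpha$-curve and a $\beta$-curve is transverse, giving the last axiom. Choosing an arbitrary orientation on each curve supplies the oriented data required by our conventions.

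The connectedness conditions are exactly the statement that each meridian system is a \emph{cut system}: cutting $\Sigma_{\genre}$ along the $\genre$ disjoint curves $\alpha_1, \dots, \alpha_{\genre}$ leaves a connected planar surface (a sphere with $2\genre$ boundary circles), precisely because the $\genre$ disks cut $V_\alpha$ into a ball; the same reasoning applies to $\beta$. Hence the complement of $\alpha$ and the complement of $\beta$ in $\Sigma_{\genre}$ are each connected, and all five defining conditions hold. That the $3$-manifold reconstructed from $(\Sigma_{\genre}, \alpha, \beta)$ by regluing the two handlebodies is diffeomorphic to $M$ is immediate from the construction.

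I expect the main obstacle to be the first step --- establishing the existence of the Heegaard splitting itself --- since it rests on the nontrivial triangulability (equivalently, the Morse-theoretic structure) of $3$-manifolds together with the handle-rearrangement lemma; once a splitting is in hand, the passage to a diagram is essentially bookkeeping. Since this is the classical result cited as \cite[Theorem 2.1 and Remark 2.6]{Ozsvath_Szabo}, I would in practice appeal directly to that reference for the splitting and devote the argument to verifying that our \emph{oriented} Heegaard-diagram axioms are met.
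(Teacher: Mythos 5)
Your argument is the standard one — Heegaard splitting via handle decomposition, then meridian disk systems for the two handlebodies, with the cut-system property giving the connectedness of the complements — and it is correct; the paper itself gives no proof but simply cites \cite[Theorem 2.1 and Remark 2.6]{Ozsvath_Szabo}, which is exactly the classical route you describe and ultimately defer to. Your additional care in checking the orientation convention and the transversality axiom is consistent with the paper's definition.
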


In this article, we define and use a more general and abstract type of diagram encompassing Heegaard diagrams.

\begin{defi}[Virtual Heegaard diagrams]
We denote by $D^v_{He}$ the set of triplets $D =(\Sigma_{\genre}, \alpha, \beta)$ such that 

\begin{itemize}
\item $\Sigma_{\genre}$ is a closed oriented surface of genus ${\genre} \in \mathbb{N}$. 
\item $\alpha = (\alpha_1, ..., \alpha_{\genre})$ is a collection of ${\genre}$ simple closed disjoint curves.
\item $\beta = (\beta_1, ..., \beta_{\genre})$ is a collection of ${\genre}$ closed immersed curves.
\item The complement of $\alpha$ in $\Sigma_{\genre}$ is connected. 
\item  Every (self-)intersection of the curves is transverse. 
\end{itemize}

We call \emph{virtual Heegaard diagrams} the elements of $D^v_{He}$.

\end{defi}

\Rq It is clear that a regular Heegaard diagram is also a virtual Heegaard diagram. 
\vspace{5pt}

\begin{figure}[ht]
\centering
\includegraphics[scale=0.75]{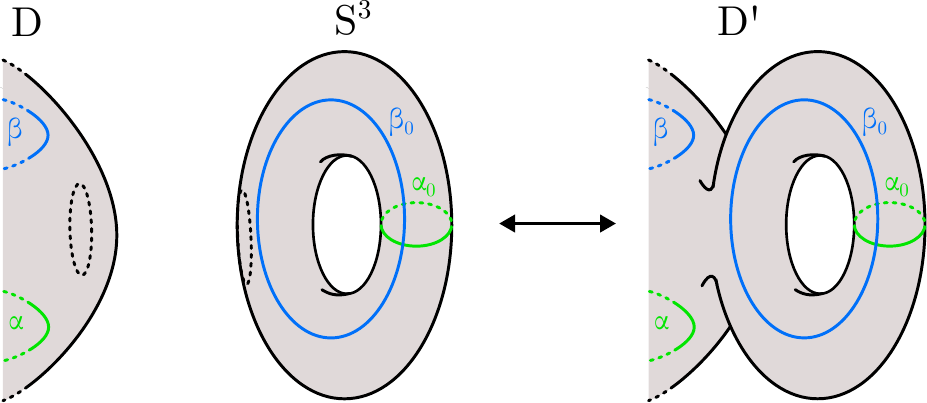}
\caption{Stabilization on a diagram $D$ or destabilization on a diagram $D'$}\label{fig:Stabilisation}
\end{figure}

\begin{defi}

Let $D = (\Sigma_{\genre}, \alpha, \beta) \in D^v_{He}$ be a virtual Heegaard diagram. 
Let us consider the Heegaard diagram of genus $1$ of the sphere $S^3$, $(T, \alpha_0, \beta_0)$ (as represented in Figure \ref{fig:Stabilisation}). 
Let $\Sigma_{\genre}'$ be a connected sum of $\Sigma_{\genre}$ with the torus $T$ along a disk disjoint from the sets of curves $\alpha, \alpha_0, \beta$ and $\beta_0$. 
Define $\alpha' = \alpha_0 \cup \alpha$ and $\beta' = \beta_0 \cup \beta$. 
We say that the virtual Heegaard diagram $D' =  (\Sigma_{\genre}', \alpha', \beta')$ is a \emph{stabilization} of $D$.

Conversely, we say that $D$ is a \emph{destabilization} of $D'$. 
\end{defi}

\begin{figure}[ht]
\centering
\includegraphics[scale=0.5]{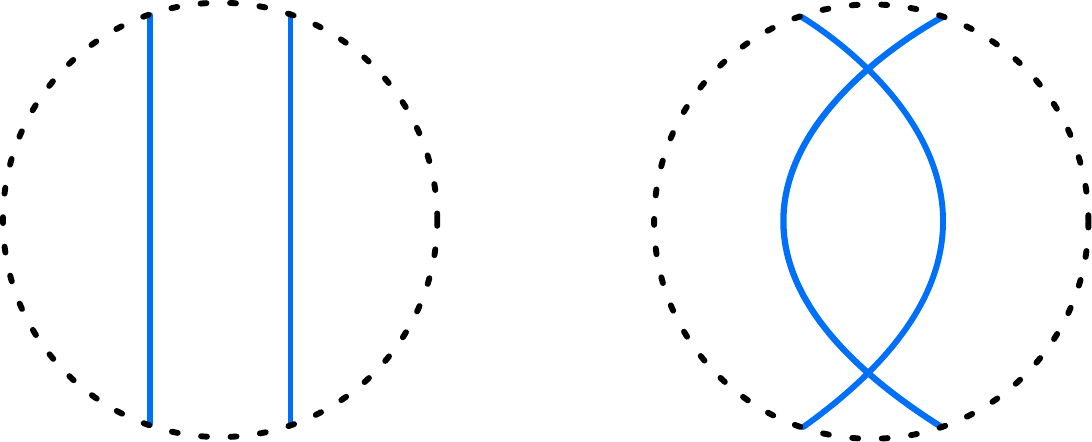}
\caption{Diagrams differing by a $R_{II}$ move (here, between $\beta$ curves).}\label{fig:MouvR2}
\end{figure}

\begin{defi}
We say that two Heegaard diagrams are equivalent if we can link one to the other through a sequence of isotopies of the surface, handle-slides (of $\beta$ curves or $\alpha$ curves between themselves), stabilizations and destabilizations, type $2$ Reidemeister moves between $\beta$ and $\alpha$ curves (also called $R_{II}$ moves, see Figure \ref{fig:MouvR2}) and orientation changes on $\alpha$ and $\beta$ curves. 
\end{defi}

\begin{theorem}
Two Heegaard diagrams representing the same manifold are equivalent. 
\end{theorem}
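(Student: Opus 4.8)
The plan is to reduce the statement to the classical Reidemeister--Singer theorem, treating the orientations of the curves and the $R_{II}$ moves as inessential decorations layered on top of it. Recall the Reidemeister--Singer theorem in its diagrammatic form (see \cite{Ozsvath_Szabo}): two \emph{unoriented} Heegaard diagrams represent the same closed oriented $3$-manifold if and only if they are related by a finite sequence of isotopies of the surface, handle-slides among the $\alpha$ curves and among the $\beta$ curves, and stabilizations and destabilizations. Since the $3$-manifold obtained by gluing the two handlebodies depends only on the underlying unoriented curve systems---the orientations of the $\alpha$ and $\beta$ curves play no role in the gluing---two oriented diagrams $D_1$ and $D_2$ represent the same manifold precisely when the diagrams obtained by forgetting their orientations do. Thus the first step is to forget orientations, apply Reidemeister--Singer, and then lift the resulting sequence of moves back to the oriented setting.

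To lift the sequence, I would perform each move of the Reidemeister--Singer sequence on the oriented diagram while carrying the orientations along: an isotopy transports orientations canonically, a stabilization introduces the standard oriented pair $(\alpha_0, \beta_0)$, and a handle-slide of one curve over another produces a new curve whose orientation is induced by the band used in the slide. After running through the whole sequence we arrive at a diagram whose underlying unoriented data agree with the target $D_2$, but whose curve orientations may differ from those prescribed by $D_2$. Because orientation changes on the $\alpha$ and $\beta$ curves are among the allowed moves, we may correct each such discrepancy, obtaining $D_1 \sim D_2$ as oriented (virtual) Heegaard diagrams.

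It remains to account for the $R_{II}$ moves, which are needed precisely because an isotopy of a $\beta$ curve relative to the fixed $\alpha$ curves can change the number of $\alpha$--$\beta$ intersection points. I would make this explicit by a standard transversality argument: after a small perturbation, a one-parameter family of curve configurations realizing such an isotopy meets the locus of tangencies between an $\alpha$ and a $\beta$ curve transversally and in finitely many parameter values, and the configuration changes at each such value by exactly one $R_{II}$ move, while between consecutive values the intersection pattern is constant up to isotopy preserving generic position. Hence every isotopy appearing in the Reidemeister--Singer sequence decomposes into the isotopies and $R_{II}$ moves permitted here; triple-point crossings, which do not alter the intersection number, are absorbed into the isotopy.

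The main obstacle is this bookkeeping rather than any deep geometry: one must check that the orientation-carrying is coherent across handle-slides and stabilizations, and that the generic-position decomposition of the isotopies is compatible with the transported orientations. Both are routine once the conventions are fixed, so the genuine content of the statement lies entirely in the classical Reidemeister--Singer theorem, the present formulation being an oriented, transversality-refined repackaging of it.
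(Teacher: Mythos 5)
Your proposal is correct and follows essentially the same route as the paper, which offers no proof of its own but simply observes that the statement is a corollary of the Reidemeister--Singer theorem (citing \cite{Ozsvath_Szabo} and \cite{Laudenbach}). Your additional bookkeeping---treating orientations as removable decorations handled by the explicit orientation-change moves, and decomposing a curve isotopy into ambient isotopies plus $R_{II}$ moves via a genericity argument---is exactly the routine refinement the paper implicitly relies on.
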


\Rq This Theorem (more details in \cite[Theorem 2.9]{Ozsvath_Szabo}) can be seen as a direct consequence or a Corollary of Reidemeister\cite{Reidemeister}-Singer\cite{Singer}'s Theorem, for which proof can be found in \cite{Laudenbach}. 

\begin{figure}[ht]
\centering
\includegraphics[scale=0.5]{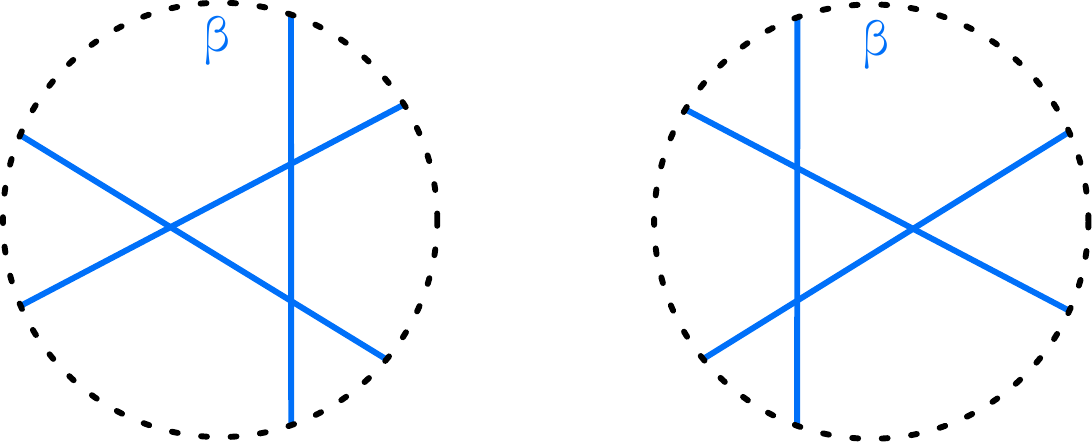}
\caption{$R_{III}$ move}\label{fig:MouvR_3}
\end{figure}

On virtual Heegaard diagrams, we also consider the following moves : 
\begin{itemize}
\item Handle slides of $\beta$ curves over $\alpha$ curves, 
\item Type $3$ Reidemeister moves between $\beta$ curves (also called $R_{III}$ moves (see Figure \ref{fig:MouvR_3}), 
\end{itemize}

\Rq In the rest of this article, we use the diagram representations of \cite{Chang_Cui} and described below. 
\vspace{5pt}

\begin{figure}[ht]
\centering
\includegraphics[scale=0.13]{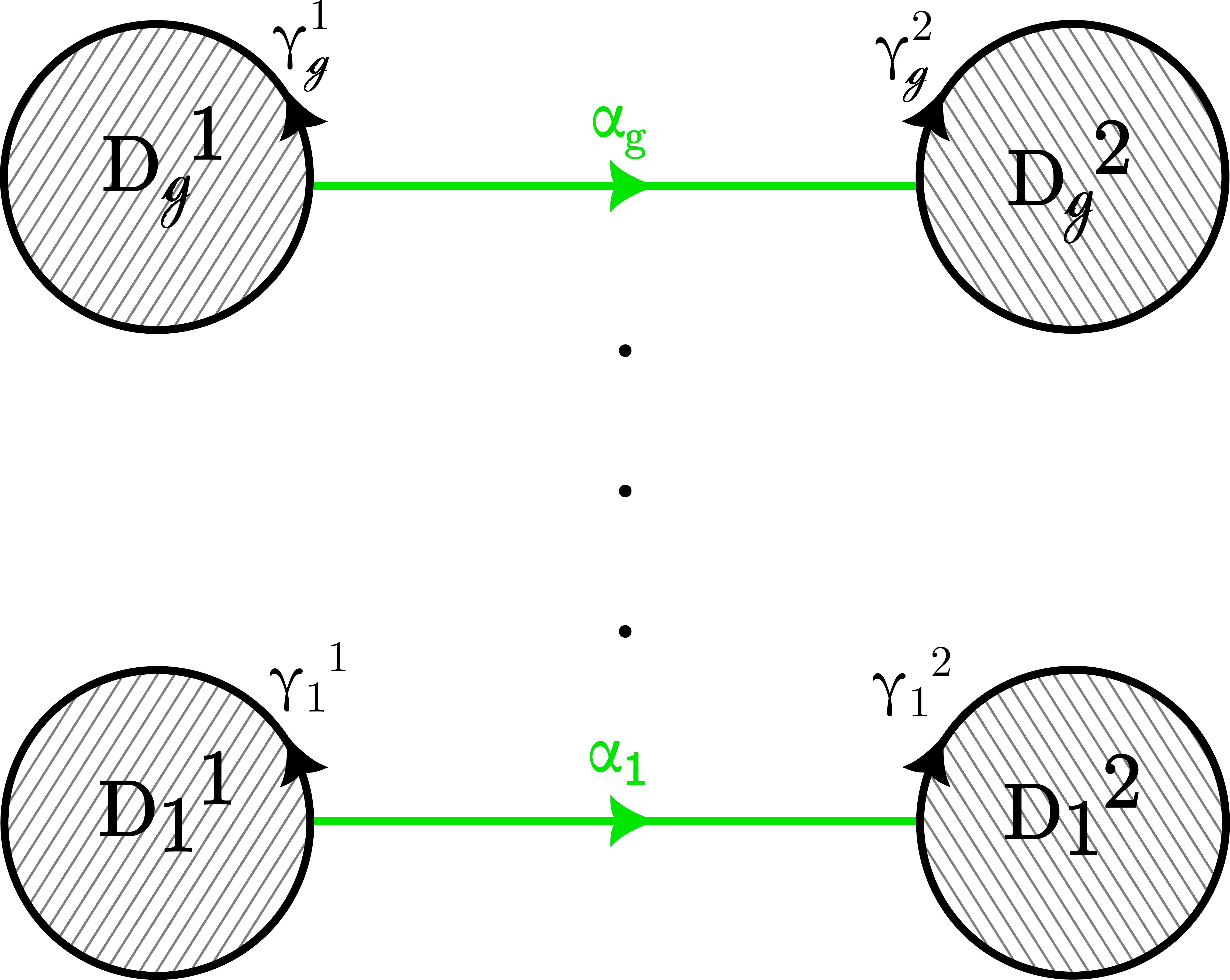} 
\caption{Genus ${\genre}$ Heegaard diagram of $\mathbb{S}^3$}\label{fig:DiagHeegType}
\end{figure}

Let $\genre$ be a natural integer. 
We consider, in $\mathbb{R}^2$, $\genre$ pairs of disks $(D_i^1, D_i^2)_{1 \leq i \leq g} \subset \mathbb{R}^2$ of the same radius $< \frac{1}{3}$ and with centers $(1,i)$ and $(2,i)$, respectively.
For each pair $(D_i^1, D_i^2)$, we consider $\alpha_i$ the minimum length segment connecting $D_i^1$ and $D_i^2$, oriented from $D_i^1$ to $D_i^2$. 
We see the boundaries of these disks as curves $\gamma_i^1$ and $\gamma_i^2$, oriented such that their intersections with $\alpha_i$ verify $\alpha_i \cdot \gamma_i^1 = \alpha_i \cdot \gamma_i^2 = +1$ (see Figure \ref{fig:DiagHeegType}).

We define the \emph{geometric intersection matrix} of two families of ${\genre}$ curves, $\alpha$ and $\gamma$, with transverse intersections, as $\MatInter{\alpha}{\gamma} := \big(\# (\alpha_i \cap \gamma_j)\big)_{i,j \in \{1, ..., {\genre} \}}$. 

\begin{defi}
We say that a Heegaard diagram $(\Sigma_{\genre}, \alpha, \gamma)$ of $\mathbb{S}^3$ is \emph{standard} if $\MatInter{\alpha}{\gamma}= \Id_{\genre}$ and if all intersections between $\alpha$ and $\gamma$ are positive. 

Let $(\Sigma_{\genre}, \alpha', \gamma')$ be a standard diagram of $\mathbb{S}^3$, and let $\PtInf$ be a point on the surface $\Sigma_{\genre} \setminus (\alpha' \cup \gamma')$. 
For all $i \in \{1, ..., {\genre}\}$, let $V_i = \gamma'_i \times [-\varepsilon, \varepsilon]$ (with $\varepsilon > 0$) be a tubular neighborhood of $\gamma'_i$ with its two boundaries denoted by $\gamma'^1_i = \gamma'_i \times -\{\varepsilon\}$ and $\gamma'^2_i = \gamma'_i \times \{ \varepsilon \}$. 

A \emph{standard projection} is an orientation preserving diffeomorphism
$$\psi : \Sigma_{\genre} \setminus \big( \{ \PtInf \} \cup \underset{i \in \{1, ..., {\genre}\}}{\bigcup} \mathring{V_i} \big) \to \mathbb{R}^2  \setminus \underset{i \in \{1, ..., {\genre}\}, j \in \{1,2\}}{\bigcup} \mathring{D_i^j} $$
which sends the set of curves $\{\alpha'_i \}_{i \in \{1, ..., {\genre}\}}$ over the previously described set of curves $\{ \alpha \}_{i \in \{1, ..., {\genre}\}}$. 
This induces that the boundaries $\{ \gamma'^1_i \}_{i \in \{1, ..., {\genre}\}}$ and $ \{ \gamma'^2_i \}_{i \in \{1, ..., {\genre}\}}$ of $\Sigma_{\genre} \setminus \big( \{ \PtInf \} \cup \underset{i \in \{1, ..., {\genre}\}}{\bigcup} \mathring{V_i} \big)$ are mapped onto the boundaries $\{ \gamma^1_i \}_{i \in \{1, ..., {\genre}\}}$ and $ \{ \gamma^2_i \}_{i \in \{1, ..., {\genre}\}}$ of $\mathbb{R}^2  \setminus \underset{i \in \{1, ..., {\genre}\}, j \in \{1,2\}}{\bigcup} \mathring{D_i^j}$, respectively. 
\end{defi}

Let $\phi$ be the index permutation defined by $\psi(\alpha'_i) = \alpha_{\phi(i)}$ for all $i \in \{1, ..., {\genre}\}$.
This permutation is also the one permuting the projections of the indices of $\gamma'^1$ and $\gamma'^2$. 
Therefore, we find that $\psi((\alpha'_i, \gamma'^1_i, \gamma'^2_i)) = (\alpha_{\phi(i)}, \gamma^1_{\phi(i)}, \gamma^2_{\phi(i)})$, for all $i \in \{1, ..., {\genre}\}$. 

We can make the identification $\Sigma \setminus \gamma' \simeq \Sigma \setminus \overline{V_i}$. 
Through that identification, $\gamma'^1_i$ and $\gamma'^2_i$ represent the same curve $\gamma'_i$ of $\Sigma_{\genre}$. 
In the rest of this article, we will also use $\gamma_{\phi(i)}$ for such images obtained from a standard projection of curves $\gamma'_i$. 

\Rq In \cite{Chang_Cui} the notion of standard diagrams also appears, however, curves are non-oriented in that article. 

\begin{defi}\label{def:DiagStandardisé}
Let $D = (\Sigma_{\genre}, \alpha, \beta)$ be a virtual Heegaard diagram along with a fixed point $\PtInf \in \Sigma_{\genre} \setminus (\alpha, \beta)$. 
Let us consider a family of curves $\gamma \subset \Sigma_{\genre} \setminus \{ \PtInf \}$ such that $(\Sigma_{\genre}, \alpha, \gamma)$ is standard and that the intersections $\beta \cap \gamma$ are transverse. 
We then say that $(\Sigma_{\genre}, \alpha, \beta, \PtInf, \gamma)$ is a \emph{standardized} diagram. 
\end{defi}

\Rq For all virtual Heegaard diagrams $(\Sigma_{\genre}, \alpha, \beta)$ and all fixed points $\PtInf \in \Sigma_{\genre} \setminus (\alpha, \beta)$, there exists at least a family $\gamma \subset (\Sigma_{\genre} \setminus \{\PtInf\})$ of ${\genre}$ simple disjoint closed curves such that $\MatInter{\alpha}{\gamma} = \Id_{\genre}$. 

Indeed, the surface $\Sigma_{\genre}$ with the $\alpha$ curves removed is diffeomorphic to an open disk with $2{\genre}-1$ holes. 
We can then establish on that disk a family $\gamma \subset \Sigma_{\genre}$ of ${\genre}$ simple disjoint curves with connected complement, such that $\MatInter{\alpha}{\gamma} = \Id_{\genre}$. 

Up to changes on the orientation of some curves of $\gamma$, we can deduce from this that there always exists a family of curves $\gamma$ for which $(\Sigma_{\genre}, \alpha, \gamma)$ is standard. 
Therefore, for all virtual Heegaard diagrams $(\Sigma_{\genre}, \alpha, \beta)$, there exists at least one standardized virtual Heegaard diagram $(\Sigma_{\genre}, \alpha, \beta,\allowbreak \PtInf, \gamma)$.

\begin{defi}
Let $(\Sigma_{\genre}, \alpha, \beta, \PtInf, \gamma)$ be a standardized virtual Heegaard diagram.
When we perform the connected sum between a curve $\beta_j$ and the boundary of a tubular neighborhood of $\gamma_i \cup \alpha_i$, $i,j \in \{1, ..., {\genre}\}$, we say that we perform a \emph{$G_{\gamma}$ move}. 
\end{defi}

\textit{Examples :}

The previously mentioned handle-slides of $\beta$ over $\alpha$, and $G_{\gamma}$ moves can be represented on the plane $\mathbb{R}^2$ by standard projections, as pictured in Figures \ref{fig:MouvGlissBA} and \ref{fig:MouvDoublGliss}, respectively. 

\begin{figure}[ht]
\centering
\includegraphics[scale=0.5]{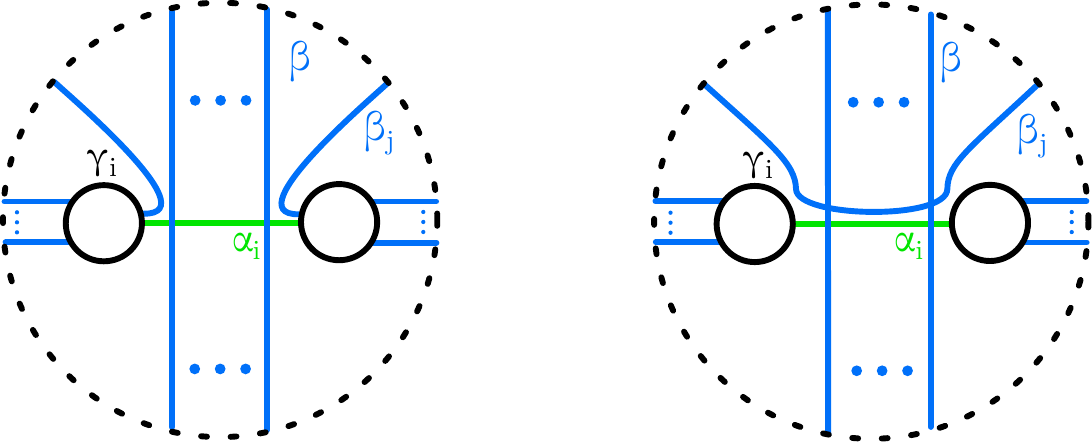}
\caption{Handle-slide from a curve $\beta_j$ over a curve $\alpha_i$ }\label{fig:MouvGlissBA}
\end{figure}

\begin{figure}[ht]
\centering
\includegraphics[scale=0.5]{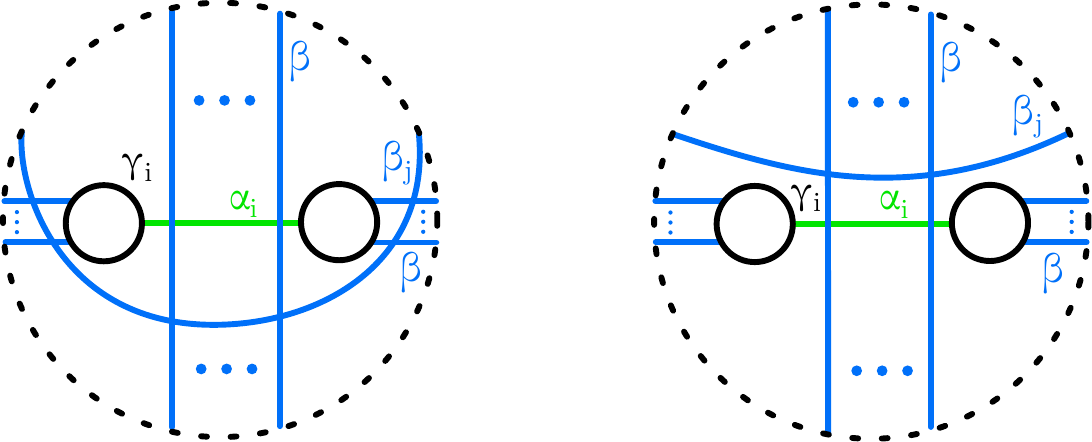}
\caption{Two diagrams differing by a $G_{\gamma}$ move from the curve $\beta_j$ over $\gamma_i \cup \alpha_i$}\label{fig:MouvDoublGliss}
\end{figure}

\begin{lemme}\label{lemme:ProjStandEqu}
Let us consider two standard projections of the same diagram $(\Sigma_{\genre}, \alpha, \gamma)$ along with a point $\PtInf \in \Sigma_{\genre} \setminus (\alpha \cup \gamma)$. 
These projections differ by a diffeomorphism of $\mathbb{R}^2 \setminus \big( \bigcup_{i \in \{1, ..., {\genre}\}} \mathring{D^1_i} \cup \mathring{D^2_i} \big)$ composed of isotopies fixing $\alpha \cup \gamma$, of Dehn twists along the boundary of disks, each containing a single pair $(\gamma_i, \alpha_i)_{ i \in \{1, ..., {\genre}\}}$, and of diffeomorphisms representing a generator of the braid group on ${\genre}$ strands. 
\end{lemme}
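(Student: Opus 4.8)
The plan is to analyze the transition diffeomorphism $\Phi := \psi' \circ \psi^{-1}$ between the two standard projections $\psi,\psi'$ of $(\Sigma_{\genre},\alpha,\gamma)$ and to exhibit a factorization of $\Phi$ into the three listed kinds of moves; since both projections have the same domain $\Sigma_{\genre} \setminus \big(\{\PtInf\} \cup \bigcup_i \mathring{V_i}\big)$ and the same target $P := \mathbb{R}^2 \setminus \bigcup_{i,j} \mathring{D_i^j}$, the map $\Phi$ is a well-defined orientation-preserving self-diffeomorphism of $P$, and recovering $\psi' = \Phi \circ \psi$ will give the statement. First I would record the constraints on $\Phi$. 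As $\psi$ sends the oriented curve $\alpha'_i$ to $\alpha_{\phi(i)}$ and $\psi'$ sends it to $\alpha_{\phi'(i)}$, the map $\Phi$ carries $\alpha_{\phi(i)}$ onto $\alpha_{\phi'(i)}$; hence $\Phi$ permutes the collection of standard arcs by $\sigma := \phi' \circ \phi^{-1}$, permutes the boundary circles $\gamma^1_i,\gamma^2_i$ accordingly, and induces a permutation of the $\genre$ ``dumbbells'' $(\gamma_i,\alpha_i)$. Because $\Phi$ is orientation-preserving and respects the orientation of each arc, it preserves the labelling $D^1_i, D^2_i$ inside each pair, so it cannot interchange the two disks of a single dumbbell. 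Finally, $\PtInf$ is common to both projections, so after one-point compactifying $\mathbb{R}^2$ at infinity I may regard $\Phi$ as fixing the end at infinity; this is what will produce the ordinary braid group on $\genre$ strands rather than the spherical braid group.

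Next I would peel off the braiding. Collapsing each enclosing disk $E_i$ (a disk containing the pair $\gamma_i,\alpha_i$) to a single marked point identifies $P$, up to the internal structure of the dumbbells and together with the fixed point at infinity, with a disk carrying $\genre$ marked points, whose mapping class group is the braid group $B_{\genre}$. The permutation and braiding of the dumbbells induced by $\Phi$ is then realized by a word $\omega$ in the standard generators (half-twists exchanging neighbouring dumbbells). Composing $\Phi$ with $\omega^{-1}$, I reduce to the case where $\Phi$ fixes each $E_i$ setwise, fixes each boundary circle $\gamma^1_i,\gamma^2_i$, and preserves each oriented arc $\alpha_i$, so that $\Phi$ is now supported in the disjoint union $\bigsqcup_i E_i$ and fixes each $\partial E_i$.

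Then I would remove the internal twisting dumbbell by dumbbell. On a single $E_i$, cutting along the arc $\alpha_i$ merges the two holes $D^1_i, D^2_i$ into one, turning $E_i$ into an annulus; the mapping class group of an annulus fixing both of its boundaries is infinite cyclic, generated by the core Dehn twist, which in $E_i$ is exactly the Dehn twist $T_{\partial E_i}$ along the boundary of the disk containing the pair $(\gamma_i,\alpha_i)$. Hence $\Phi|_{E_i}$ is isotopic, through isotopies fixing $\alpha_i \cup \gamma_i$ and $\partial E_i$, to a power $T_{\partial E_i}^{n_i}$. Composing $\Phi$ with the appropriate powers of these twists over all $i$ and invoking the Alexander method (a diffeomorphism of a planar surface fixing a filling collection of arcs and boundary circles up to isotopy is isotopic to the identity), the remaining map is isotopic to the identity through isotopies fixing $\alpha \cup \gamma$. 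Reassembling the three reductions yields a factorization $\Phi = \iota \circ \big(\prod_i T_{\partial E_i}^{n_i}\big) \circ \omega$, with $\iota$ an isotopy fixing $\alpha\cup\gamma$, the $T_{\partial E_i}$ the stated boundary Dehn twists, and $\omega$ a product of braid generators, which is precisely the claimed form.

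The main obstacle will be the combination of the second and third steps, namely establishing that the mapping class group of $P$ preserving the oriented-arc and labelled-hole structure is generated exactly by the braid generators and the dumbbell-boundary Dehn twists, with nothing left over. The delicate points are (i) ruling out a half-twist inside a single dumbbell, which is where the preservation of the arc orientation and of the disk labels $D^1_i, D^2_i$ is essential, and (ii) the careful application of the Alexander/change-of-coordinates method to guarantee that, once the braid and twist contributions are removed, only isotopies fixing $\alpha \cup \gamma$ remain; one must also keep track of the non-compactness at infinity, ensuring throughout that the relevant group is $B_{\genre}$ and that the end fixed by $\PtInf$ is never moved.
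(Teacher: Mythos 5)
Your proposal is correct and follows essentially the same route as the paper: both identify the relevant mapping class group of the punctured plane preserving the dumbbells $(\gamma_i,\alpha_i)$ with the framed braid group on $\genre$ strands, realizing the braid part by Artin generators and the framing part ($\mathbb{Z}^{\genre}$) by Dehn twists along the disks enclosing each pair. Your write-up is in fact more careful than the paper's sketch — in particular the orientation argument ruling out a half-twist inside a dumbbell and the annulus/Alexander-method step — but it is the same decomposition.
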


\begin{proof}
Let us consider two standard projections of the same diagram $D =(\Sigma_{\genre}, \alpha, \allowbreak \gamma)$ along with a point $\PtInf \in \Sigma_{\genre} \setminus (\alpha \cup \gamma)$. 
The images of $D$ by these projections differ by a diffeomorphism of $\mathbb{R}^2$ fixing the set of pairs $\{(\gamma_i, \alpha_i)\}_{i \in \{1, ..., {\genre}\}}$. 
Each pair $(\gamma_i, \alpha_i)_{ i \in \{1, ..., {\genre}\}}$ lies within a disk, as each $\gamma_i \cup \alpha_i$ admits a neighborhood isomorphic to a disk in $\mathbb{R}^2$.
Therefore, we can see the classes of these diffeomorphisms (modulo isotopies fixing $\alpha \cup \gamma$) as the set of braids on ${\genre}$ strands in the plane, with framing on each strand.

Artin's theorem gives a family of generators $(\omega_i)_{i \in \{1, ..., {\genre}-1\}}$ for these diffeomorphisms. 

\begin{theorem}[Artin \cite{Artin}]\label{th:Artin}
The braid group on $n$ strands in the plane admits a presentation with generators $\omega_1, ..., \omega_{n-1}$ and defining relations 
\begin{itemize}
\item $\omega_i \omega_j = \omega_j \omega_i$ if $|i-j| \geq 2$, $i,j \in \{1, ..., n-1\}$, 
\item $\omega_i \omega_{i+1} \omega_i = \omega_{i+1} \omega_i \omega_{i+1}$ for all $i \in \{1, ..., n-1\}$. 
\end{itemize}
\end{theorem}

More details in \cite[Th. 1.8]{Birman}.

Now, it remains to observe the difference between two diffeomorphisms for which their associated braids have differing framings.
Since the boundaries of the disks are fixed, the change induced on the framing by any considered diffeomorphism will be an element of $\mathbb{Z}^{\genre}$. 
A difference between two projections of one positive turn on the framing over one of the fixed disks amounts to performing, in the diagram, a Dehn twist around that disk. 
Therefore, we find the desired result. 
\end{proof}

We introduce virtual Heegaard diagrams because, unlike regular Heegaard diagrams, every virtual Heegaard diagram can be associated to a diagram whose family of $\beta$ curves can be entirely projected onto the plane $\mathbb{R}^2 \setminus \big( \bigcup_{i \in \{1, ..., {\genre}\}} D^1_i \cup D^2_i \big)$. 
We will call \emph{flat} such a diagram. 

\begin{defi}[Flat diagrams]
Let us consider a diagram $D=(\Sigma_{\genre}, \alpha, \beta) \in D^v_{He}$ and a fixed point $\PtInf \in \Sigma_{\genre} \setminus (\alpha \cup \beta)$. 
We say that $(D, \PtInf)$ is \emph{flat} with regard to a set of curves $\gamma \subset \Sigma_{\genre} \setminus \PtInf$  (or $\gamma$-flat) if the sets $\beta$ and $\gamma$ never intersect each other.

Let us consider two virtual Heegaard diagrams $D=(\Sigma_{\genre}, \alpha, \beta) \in D^v_{He}$ and $D'=(\Sigma_{\genre}, \alpha, \beta') \in D^v_{He}$, a fixed point $\PtInf \in \Sigma_{\genre} \setminus (\alpha \cup \beta \cup \beta')$ and a set of curves $\gamma \subset (\Sigma_{\genre} \setminus \{\PtInf\})$ such that the $\beta \cap \gamma$ intersections are transverse. 
We say that $(D', \PtInf)$ is a \emph{flattening} of $(D, \PtInf)$ with regards to $\gamma$ (or \emph{$\gamma$-flattening} of $(D, \PtInf)$) if $\beta'$ and $\gamma$ never intersect each other, and if $D'$ is linked to $D$ through a sequence of handle-slides of $\beta$ curves over $\alpha$, moves $R_{II}$ between $\beta$ curves or between $\beta$ and $\alpha$ curves, moves $R_{III}$ between $\beta$ curves and isotopies of the pointed surface $(\Sigma_{\genre}, \PtInf)$ with each move operated within an open set not containing $\PtInf$. 
We will sometimes simply say that $D'$ is a $\gamma$-flattening of $D$. 
\end{defi}

\Rq Two flattenings of the same virtual Heegaard diagram, with the same point $\PtInf$ and the same family of curves $\gamma$ will always be linked by a sequence of moves $R_{II}$ between $\beta$ curves or between $\beta$ and $\alpha$ curves, moves $R_{III}$ between $\beta$ curves, isotopies of the pointed surface $(\Sigma_{\genre}, \PtInf)$ and moves $G_{\gamma}$ (see Figure \ref{fig:MouvDoublGliss}) with each move operated within an open set not containing $\PtInf$. 
\vspace{5pt}

\begin{propo}\label{Aplatissement}
Let us consider a virtual Heegaard diagram $D=(\Sigma_{\genre}, \alpha, \beta) \in D^v_{He}$, and a fixed point $\PtInf \in \Sigma_{\genre} \setminus (\alpha \cup \beta)$.
Let $\gamma \subset (\Sigma_{\genre} \setminus \{\PtInf\})$ be a family of ${\genre}$ simple disjoint closed curves such that $\MatInter{\alpha}{\gamma} = \Id_{\genre}$, and that all intersections $\beta \cap \gamma$ are transverse. 
Then, there exists a $\gamma$-flattening of $D$. 
\end{propo}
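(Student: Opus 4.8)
I want to show that given a virtual Heegaard diagram $D=(\Sigma_{\genre}, \alpha, \beta)$ with a base point $\PtInf$, and a family $\gamma$ of $\genre$ simple disjoint closed curves with $[\alpha\cap\gamma]=\Id_\genre$ and $\beta\cap\gamma$ transverse, I can find a $\gamma$-flattening: another diagram $D'=(\Sigma_\genre,\alpha,\beta')$ where $\beta'$ is disjoint from $\gamma$, reached from $D$ by handle-slides of $\beta$ over $\alpha$, $R_{II}$ and $R_{III}$ moves among/between $\beta$ and $\alpha$, and isotopies of $(\Sigma_\genre,\PtInf)$, all performed away from $\PtInf$.

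**The approach.** The key idea should be an induction on the total geometric intersection number $N:=\sum_{i,j}\#(\beta_i\cap\gamma_j)$, reducing it to zero. The base case $N=0$ is trivial since then $\beta$ is already disjoint from $\gamma$. For the inductive step I would argue that whenever $N>0$, one can remove at least one intersection point by a sequence of the allowed moves, without introducing new $\beta\cap\gamma$ intersections on balance (i.e. strictly decreasing $N$). The natural mechanism is this: because $[\alpha\cap\gamma]=\Id_\genre$, each $\gamma_i$ meets only $\alpha_i$ and does so in a single point; a neighborhood of $\gamma_i\cup\alpha_i$ is a disk (as used in Lemma \ref{lemme:ProjStandEqu}) in which $\gamma_i$ bounds. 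Whenever $\beta$ crosses some $\gamma_i$, I want to slide the crossing strand of $\beta$ across $\alpha_i$ — a handle-slide of $\beta$ over $\alpha_i$ — so that the strand is pushed off $\gamma_i$ entirely. Each such handle-slide trades an intersection with $\gamma_i$ for a controlled modification of $\beta$ that, after an $R_{II}$ move, removes the crossing; the possible new crossings it creates with other $\gamma_j$ are handled because the handle-slide can be performed in the disk neighborhood of $\gamma_i\cup\alpha_i$, which is disjoint from the other $\gamma_j$.

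**The main technical step.** Concretely, I would first pass to a standard projection (Lemma \ref{lemme:ProjStandEqu} guarantees these exist and are essentially unique up to braiding and Dehn twists), sending $\gamma_i$ to the boundary circles $\gamma_i^1,\gamma_i^2$ of the disks $D_i^1,D_i^2$ and $\alpha_i$ to the connecting segment. In this picture a $\beta$-strand crossing $\gamma_i$ is a strand entering one of the disk-boundaries; I use the handle-slide over $\alpha_i$ (Figure \ref{fig:MouvGlissBA}) to route that strand out through the paired disk, thereby clearing it from $\gamma_i$. The bookkeeping is that innermost crossings can always be resolved first: among all $\beta\cap\gamma$ intersection points, choose one that is innermost along $\gamma_i$ (cutting off a subarc of $\gamma_i$ meeting no other $\gamma$-intersections), and apply the slide there. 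This guarantees strict decrease of $N$ while keeping all moves within an open set avoiding $\PtInf$ (which is possible since $\PtInf\notin\alpha\cup\beta$ and the disks can be taken away from $\PtInf$).

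**Where the difficulty lies.** The main obstacle is controlling the $R_{III}$ moves and self-intersections: because the $\beta$ curves are only immersed (they may have crossings among themselves), pushing one $\beta$-strand off $\gamma_i$ may force it to cross other $\beta$-strands, and I must check that these crossings are legitimately accounted for by $R_{II}$/$R_{III}$ moves between $\beta$ curves rather than creating an obstruction. I would handle this by always sliding along a subarc of $\gamma_i$ on which $\beta$ is embedded (innermost choice), so the strand being moved sweeps a region where the only forced crossings with other strands are transverse and removable by $R_{III}$. I would also need to verify that the handle-slide over $\alpha_i$ does not increase the count $\#(\beta\cap\gamma_i)$ by more than it decreases — this is where the hypothesis $[\alpha\cap\gamma]=\Id_\genre$ is essential, ensuring $\alpha_i$ meets $\gamma$ only at the single point $\alpha_i\cap\gamma_i$, so sliding over $\alpha_i$ introduces no new $\gamma$-crossings except possibly a single cancelable pair near that point. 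Assembling these observations into a clean decreasing quantity is the crux of the argument.
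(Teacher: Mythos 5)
Your proposal is correct and takes essentially the same route as the paper: induct on the number of $\beta\cap\gamma$ crossings, select on each $\gamma_i$ the crossing closest (along $\gamma_i$) to the single point $\alpha_i\cap\gamma_i$, slide it there by an isotopy avoiding $\PtInf$, and erase it with a handle-slide of $\beta$ over $\alpha_i$, the hypothesis $\MatInter{\alpha}{\gamma}=\Id_{\genre}$ guaranteeing that no new $\beta\cap\gamma$ crossings are created. The paper likewise relies on the allowed $R_{II}$/$R_{III}$ moves between $\beta$ curves to absorb any incidental crossings, so your concluding worries are resolved exactly as you anticipate.
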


\Rq We note that $\MatInter{\alpha}{\gamma} = \Id_{\genre}$ induces that $\alpha$ and $\gamma$ each have connected complement. 
\vspace{5pt}

\begin{proof}
Let us consider a virtual Heegaard diagram $D=(\Sigma_{\genre}, \alpha, \beta) \in D^v_{He}$, and a fixed point $\PtInf \in \Sigma_{\genre} \setminus (\alpha \cup \beta)$. 
Let $\gamma \subset (\Sigma_{\genre} \setminus \{\PtInf\})$ be a family of ${\genre}$ simple disjoint closed curves such that $\MatInter{\alpha}{\gamma} = (\delta_{ij})_{i,j \in \{1, ..., {\genre} \}}$, and that all intersections $\beta \cap \gamma$ are transverse. 
For all $i \in \{1, ..., \genre\}$, we denote by $p_i \in \gamma_i$ the intersection point defined by $\alpha_i \cup \gamma_i = \{p_i\}$.

Let us assume that the set $\beta \cap \gamma$ is non-empty (else the diagram is already flat). 
Then, there exists a crossing $c \in \beta \cap \gamma$ on a curve $\gamma_i \in \gamma$ such that the segment of $\gamma_i$ between $p_i$ and $c$ does not intersect $\beta$.
We denote by $\beta_j \in \beta$ the $\beta$ curve on which $c$ is placed.  

By an isotopy of the surface $\Sigma_{\genre} \setminus (\PtInf \cup \alpha)$, we can slide the crossing $c$ along the previously mentioned segment of $\gamma_i$ until it is sufficiently close to $p_i$. 
We can then perform a handle-slide of $\beta_j$ over $\alpha_i$.
Since an open segment of $\beta_j$ centered at $c$ is now parallel to an open segment of $\alpha_i$ centered at $p_i$, the crossing is erased by the handle-slide and no new crossing appeared because, by our initial hypothesis, $\alpha_i$ intersects $\gamma$ only at $p_i$. 

By induction on the number of crossings between $\beta$ and $\gamma$, we can recover a $\gamma$-flattening of $(D, \PtInf)$. 

\end{proof}

\Rq Since the $\alpha$ curves are all disjoint and $\gamma_i \cap \alpha = \{p_i\}$, we notice that the process used in the proof does not induce any additional crossing between $\beta$ and $\alpha$.
\vspace{5pt}

\begin{defi}
A standardized virtual Heegaard diagram $(\Sigma_{\genre}, \alpha, \beta, \PtInf, \gamma)$ is called \emph{standardized flat} if $((\Sigma_{\genre}, \alpha, \beta), \PtInf)$ is flat with regards to $\gamma$.
\end{defi}

It results from Proposition \ref{Aplatissement} and the Remark following Definition \ref{def:DiagStandardisé} that, for any virtual Heegaard diagram $(\Sigma_{\genre}, \alpha, \beta)$ and any point $\PtInf \in \Sigma_{\genre} \setminus (\alpha \cup \beta)$, there exists a family of curves $\gamma$ and a family of curves $\beta'$ such that $((\Sigma_{\genre}, \alpha, \beta'), \PtInf)$ is a $\gamma$-flattening of $((\Sigma_{\genre}, \alpha, \beta), \PtInf)$. 
Therefore, there always exists at least one standardized flat virtual Heegaard diagram $(\Sigma_{\genre}, \alpha, \beta', \PtInf, \gamma)$ associated to $(\Sigma_{\genre}, \alpha, \beta)$.

\subsection{The map $F''$ over virtual Heegaard diagrams}\label{Part:F''}

Let $D = (\Sigma_{\genre}, \alpha, \beta, \PtInf, \gamma)$ be a standardized flat virtual Heegaard diagram. 
Any standard projection $\psi$ of $((\Sigma_{\genre}, \alpha, \gamma), \PtInf)$ produces an image $(\alpha', \beta') = \psi(\alpha, \beta)$ in the plane with $2{\genre}$ disks removed. 
We call $\mathfrak{D}$ the set of all such images of standard projection. 

\begin{defi}
We define the map $F''_H : \mathfrak{D} \to \korps$ by the following algorithm. 
\end{defi}

Let us consider $(\alpha', \beta') \in \mathfrak{D}$. 
By definition of $\mathfrak{D}$, $(\alpha', \beta')$ is the image, by a standard projection, of a standardized flat virtual Heegaard diagram $D=(\Sigma_{\genre}, \alpha, \beta, \PtInf, \gamma)$. 
In the rest of this article, we will identify $\alpha'$ and $\beta'$ with their preimages under $\psi$, we will thus denote them $\alpha$ and $\beta$ respectively.

\begin{itemize}

\item We place a unique base point $p_i$ on each component $\beta_i$ of $\beta$. 

\item For every component $\alpha_k \in \alpha$ such that $\alpha_k \cap \beta \not= \emptyset$, we apply the action of $\Lambda$ to all crossings with $\beta$. 
In other words, let $n$ be the cardinal of $\alpha_k \cap \beta$, and $\Lambda_{(j)}$ be the $j$-nth factor of the coproduct $\Delta^{(n)}(\Lambda)$ (as indicated by \eqref{Not:Sweedler}).
We locally add to the $j$-nth crossing of $\alpha_k$, on the curve $\beta_i$ of $\beta$ involved in the crossing, the bead
 \begin{equation*}
      S^{d_j}(\Lambda_{(j)}) \hspace{5pt} \text{ where } d_j =
     \begin{cases}
     1 & \text{ if } (\vec{\alpha_k},\vec{\beta_i}) \text{ has negative orientation at the crossing} ,\\
     0 & \text{else (see Figure \ref{fig:PerlesKAmod}).}
     \end{cases}
\end{equation*} 
\begin{figure}[ht]
\centering
\includegraphics[scale=0.5]{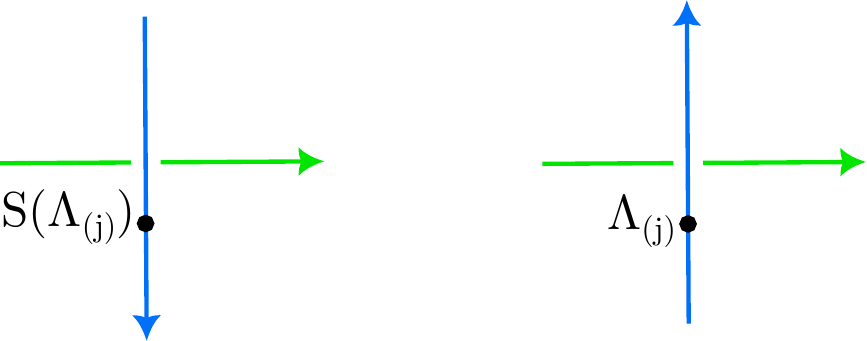}
\caption{Beads placement at the crossing between curves $\alpha_k$ and $\beta_j$}\label{fig:PerlesKAmod}
\end{figure}

We then work with sums of diagrams, where only the bead's values change (see in the example Figure \ref{fig:ExemplePerles}).
Accordingly, "the bead $x$" refers to the corresponding $x$ bead on every diagram in the summation. 

\item We then add on each "cap" and "cup" of $\beta$ the beads $g$ as indicated in Figure \ref{fig:PlaceG}. 
(We note that these are the same bead placements as in \ref{fig:FoncPerleGPP}.)

\begin{figure}[ht]
\centering
\includegraphics[scale=0.7]{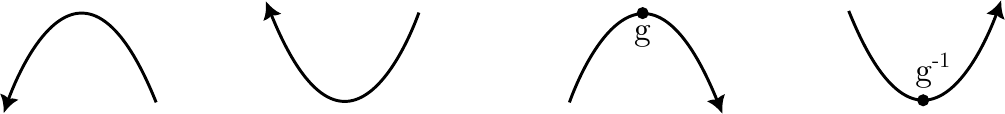}
\caption{Placement of $g$ beads on $\beta$}\label{fig:PlaceG}
\end{figure}

\item For each component $\beta_i \in \beta$, we follow its orientation starting from the base point, and multiply every bead accordingly (the multiplication is performed from the left).
This gives us a total bead $\perleTot_i$. 
If there is no bead on $\beta_i$, we consider $\perleTot_i = 1_H$. 

\item The image of $F''$ is given by 
$$F''_H(\psi(\alpha, \beta)) :=  \epsilon(\Lambda)^v \prod_{i =1}^{\genre} \mu(\perleTot_i), $$ 
where $v \in \mathbb{N}$ is the number of components $\alpha_k \in \alpha$ such that $\alpha_k \cap \beta = \emptyset$. 
\end{itemize}

\begin{figure}[ht]
\centering
\includegraphics[scale=0.45]{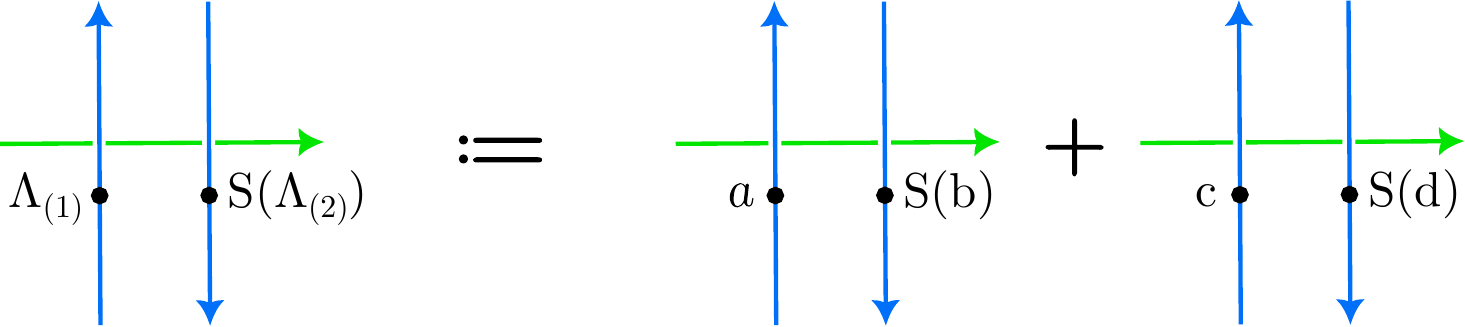}
\caption{Notation example when $\Delta^{(2)}( \Lambda )= a \otimes b + c \otimes d$}\label{fig:ExemplePerles}
\end{figure}

The map $F''$ is well defined because, by property of the symmetrized integral, for all $x, y \in H$ we have $\mu(xy) = \mu(yx)$. 
Hence, $F''$ is independent of the choice of base point placement.

\begin{propo}\label{ExistF''_H}
Let $(\Sigma_{\genre}, \alpha, \beta, \PtInf, \gamma)$ be a standardized flat virtual Heegaard diagram. 
Two elements of $\mathfrak{D}$ recovered from $(\Sigma_{\genre}, \alpha, \beta, \PtInf, \gamma)$ will have the same image by $F''$. 
Therefore, we can define 
$$ F''(\Sigma_{\genre}, \alpha, \beta, \PtInf, \gamma) = F''(\psi(\alpha, \beta)) $$
where $\psi$ is any standard projection of $((\Sigma_{\genre}, \alpha, \gamma), \PtInf)$. 
\end{propo}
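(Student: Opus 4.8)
We must show that the scalar $F''(\psi(\alpha,\beta))$ does not depend on the chosen standard projection $\psi$ of $((\Sigma_{\genre},\alpha,\gamma),\PtInf)$, so that $F''$ descends to a well-defined invariant of the standardized flat diagram itself.

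**The plan.** The approach is to invoke Lemma \ref{lemme:ProjStandEqu}, which says that any two standard projections of the same $(\Sigma_{\genre},\alpha,\gamma)$ differ by a diffeomorphism of the punctured plane that is a composite of three types of moves: isotopies fixing $\alpha\cup\gamma$, Dehn twists along boundaries of the disks enclosing each pair $(\gamma_i,\alpha_i)$, and braid-group generators $\omega_i$ permuting the disks. It therefore suffices to check that the output of the $F''$ algorithm is invariant under each of these three generating moves separately. First I would handle isotopies fixing $\alpha\cup\gamma$: since the algorithm assigns beads only at the crossings of $\alpha$ with $\beta$ and at the caps/cups of $\beta$, and since an ambient isotopy fixing $\alpha\cup\gamma$ carries $\beta$ to an isotopic curve in the complement (the diagram is $\gamma$-flat, so no new $\beta\cap\gamma$ crossings can appear), the collection of $\alpha$-$\beta$ crossings with their signs $d_j$ and the cyclic sequence of beads along each $\beta_i$ are unchanged up to the planar isotopy invariance already built into the $PT$-type bead calculus. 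Here the sphericity of $H$ and the property $\mu(xy)=\mu(yx)$ of the symmetrized integral guarantee that Reidemeister-type and cap/cup cancellations leave $\mu(\perleTot_i)$ fixed; this is the routine part.

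**The two substantive moves.** The Dehn twist move is where the pivot $g$ and the spherical structure carry the weight. A Dehn twist around the disk containing $(\gamma_i,\alpha_i)$ changes the framing of that strand by $\pm 1$ and so inserts, along each strand of $\beta$ passing through the twist region, an extra full curl; in the bead language this contributes a factor built from $g^{\pm 2}=a^{\pm 1}$ or from the compatibility of $g$ with the $\Lambda$-beads. I would verify that because $\Lambda$ is a \emph{two-sided} cointegral (unimodularity of $H$, so $\alpha=\epsilon$) and because $g$ is a pivot with $g^2=a$, the net effect of the curl on $\mu(\perleTot_i)$ is trivial — the $\mu$-invariance properties $\mu\circ S=\mu$ and $\mu(g\cdot)=\lambda$ absorb the extra $g$-beads exactly. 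The braid-generator move $\omega_i$ is the genuine obstacle: it swaps two adjacent disks, and in doing so drags the $\beta$-strands across one another, which introduces $R_{III}$-type crossing rearrangements among the $\beta$ curves and reorders the crossings with the $\alpha$'s. I expect the heart of the argument to be showing that this reordering preserves $\prod_i\mu(\perleTot_i)$: the key input is the coassociativity/cocommutativity-free compatibility of the iterated coproduct $\Delta^{(n)}(\Lambda)$ with the action at each crossing, namely that moving a $\beta$-strand past the pair $(\gamma_i,\alpha_i)$ redistributes the Sweedler factors $\Lambda_{(j)}$ consistently, together with the trace property of $\mu$.

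**Main obstacle and closing.** The hard part will be the braid move $\omega_i$, precisely because it mixes the $\alpha$-crossings (carrying $\Lambda$-beads with their orientation-dependent antipode $S^{d_j}$) with the $\beta$-$\beta$ crossings governed by $R_{III}$. I would isolate this by arguing locally inside the open set where the swap occurs — which by hypothesis avoids $\PtInf$ — and checking that the algorithm's output is unchanged under a single transposition of strands, then extending to all of $\omega_1,\dots,\omega_{\genre-1}$ by the same local computation. Since Artin's presentation (Theorem \ref{th:Artin}) tells us these generators, together with the braid relations, generate the whole group, and since $F''$ is a scalar insensitive to the defining relations once invariance under each generator is known, invariance under an arbitrary change of standard projection follows. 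Combining the three cases via Lemma \ref{lemme:ProjStandEqu} then yields the claimed independence, so $F''(\Sigma_{\genre},\alpha,\beta,\PtInf,\gamma)$ is well defined.
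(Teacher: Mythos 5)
Your skeleton is the same as the paper's: reduce via Lemma \ref{lemme:ProjStandEqu} to the three generating moves (isotopies fixing $\alpha\cup\gamma$, Dehn twists around the disks containing the pairs $(\gamma_i,\alpha_i)$, and the braid generators $\omega_i$) and check invariance of the algorithm under each. Your treatment of isotopies is correct, and your Dehn-twist discussion invokes the right ingredient (unimodularity, hence $S(\Lambda)=\Lambda$ and $S^2(\Lambda)=\Lambda$), although the mechanism is not an ``extra curl contributing $g^{\pm2}=a^{\pm1}$'': what actually happens is that each strand crossing $\alpha_i$ acquires a $g^{-1}$ bead before the crossing and a $g$ bead after it, so the crossing bead is conjugated, $g\,S^{d_k}(\Lambda_{(i_k)})\,g^{-1}=S^{d_k+2}(\Lambda_{(i_k)})$, and unimodularity then restores the original tensor of Sweedler factors.

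The genuine gap is the braid-generator step, which you correctly identify as the remaining substantive point but then neither carry out nor correctly diagnose. You anticipate that $\omega_i$ ``reorders the crossings with the $\alpha$'s'' and will require redistributing the Sweedler factors of $\Delta^{(n)}(\Lambda)$ together with $R_{III}$ bookkeeping. Neither issue arises. First, the $F''$ algorithm places beads only at $\alpha\cap\beta$ crossings and at caps/cups of $\beta$; crossings between $\beta$ curves carry no beads at all, so $R_{III}$-type rearrangements are invisible to $F''$. Second, every crossing of $\alpha_j$ with $\beta$ lies inside the fixed disk $W_j$ around the pair $(\alpha_j\cup\gamma_j)$, and the diffeomorphism realizing $\omega_i$ rotates $W_i$ and $W_{i+1}$ inside an annulus $V_i$ while keeping the $\alpha$ curves horizontal and acting as the identity outside $V_i$; hence the order and signs of all $\alpha$--$\beta$ crossings are literally unchanged, and there is no coproduct redistribution to perform. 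The only possible change is in the extrema of the $\beta$ strands traversing $V_i$, and any new extrema appear as a consecutive minimum/maximum pair contributing $g^{-1}$ then $g$, which multiply to $1_H$. Without this observation your argument for $\omega_i$ is a placeholder, and as sketched it would send you looking for a compatibility statement that is not needed and is not what makes the proof work.
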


\begin{proof}
Let $D=(\Sigma_{\genre}, \alpha, \beta, \PtInf, \gamma)$ be a standardized flat virtual Heegaard diagram. 

We have previously seen in Lemma \ref{lemme:ProjStandEqu} that two projections differ by a diffeomorphism composed of isotopies (fixing $\alpha$ and $\gamma$ curves), of Dehn twists around disks containing each a pair $(\gamma_i, \alpha_i)_{ i \in \{1, ..., {\genre}\}}$, and of diffeomorphisms representing the generators of the braid group on ${\genre}$ strands.
Thus, we want to show that $F''_H$ is invariant by these elements. 

Let us consider the images of two standard projections of the same diagram, and assume that these projections differ only by an isotopy of the plane fixing the $\alpha$ and $\gamma$ curves. 
Such an isotopy does not change the crossings between $\alpha$ and $\beta$ on the images of the diagram.  
If it modifies the extrema of the $\beta$ curves between the images of both projections, the change induced will always be the appearance (or vanishing) of a pair of a consecutive minimum and maximum with the same orientations. 
Hence, depending of the orientation of the curves affected by the isotopy, either no bead appears, or a bead colored by $g$ and a bead colored by $g^{-1}$ will appear consecutively. 
Therefore, $F''_H$ is invariant by isotopies of the plane fixing the $\alpha$ and $\gamma$ curves. 

The computational algorithm of $F''$ allows us to note the following point ;
If two elements of $\mathfrak{D}$ have the same crossings $\alpha \cap \beta$ (orders and orientations over the curves $\alpha$ and $\beta$) and the same local minima and maxima over the $\beta$ curves (positions on $\beta$ relative to the crossings $\alpha \cap \beta$ and orientations included), then these two diagrams will have the same image by $F''_H$. 
In the case of images by two standard projections of the same diagram, they directly admit the same crossings $\alpha \cap \beta$, therefore, it is enough to focus on the minima and maxima of $\beta$ curves. 

We can also deduce, in a more general setting, that two elements of $\mathfrak{D}$ differing by the image of an $R_{II}$ move (see Figure \ref{fig:MouvR2}), an $R_{III}$ move (see Figure \ref{fig:MouvR_3}) or a $G_{\gamma}$ move (see Figure \ref{fig:MouvDoublGliss}) on $\Sigma \setminus (\alpha \cup \gamma \cup \PtInf)$ will have the same image by $F''_H$, for these moves do not influence the crossings between $\beta$ and $\alpha$, neither do they influence the extrema of $\beta$. 
In the rest of this article, we sometimes use these moves to simplify diagrams without changing their image by $F''_H$. 

\begin{figure}[ht]
\centering
\includegraphics[scale=0.65]{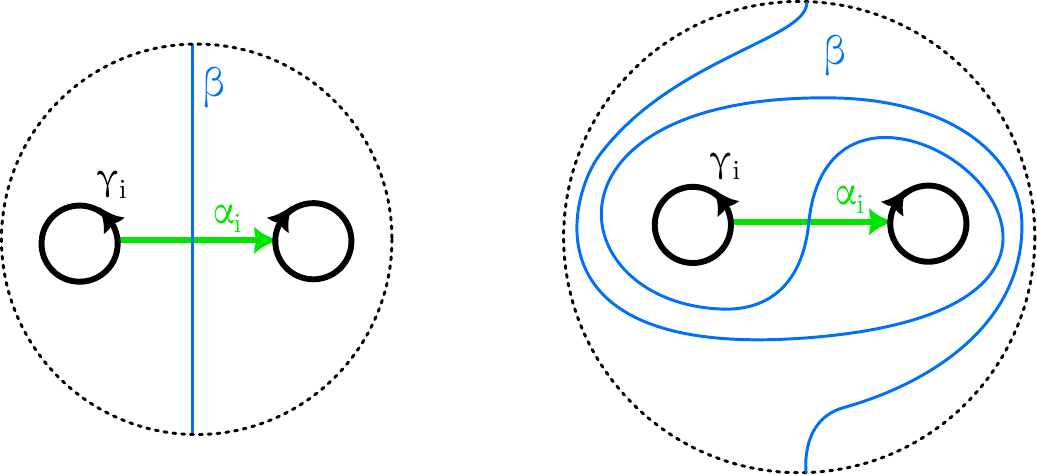}
\caption{Two projections of the same diagram differing by a Dehn twist}\label{fig:TwistDehn}
\end{figure}

Let us consider two projections of the same diagram that differ only by a Dehn twist around a fixed disk containing a pair $(\gamma_i, \alpha_i)$ (and not containing any other pair $(\gamma_j, \alpha_j), j \not = i$).
We can see this difference in Figure \ref{fig:TwistDehn} (where the vertical segment $\beta$ represents multiple parallel strands of $\beta$). 
For each strand $\beta_k \in \beta$ intersecting $\alpha_i$ in the right figure (Figure \ref{fig:TwistDehn}) we collect a bead $b'_i$ on the represented segment. 
If the segment is oriented upward, we see a bead $g^{-1}$ appear before the crossing $\alpha_i \cap \beta_k$ and a bead $g$ after the same crossing. 
The same situation arises if the segment is oriented downward. 
Therefore, $b'_i = g S^{d_k}(\Lambda_{(i_k)}) g^{-1} = S^{d_k +2}(\Lambda_{(i_k)})$, with $d_k = 0$ or $1$ depending on the sign on the crossing $\alpha_i \cap \beta_k$, and $i_k$ is the index indicating the order in which $\alpha_i$ crosses the strands of $\beta$. 
However, we recall that $\bigotimes_j S^2(\Lambda_{(j)}) = \bigotimes_j (S^2(\Lambda))_{(j)} = \bigotimes_j \Lambda_{(j)}$ because $H$ is unimodular. 
Hence, on each segment we recover the bead collected on the corresponding segment in the left figure (Figure \ref{fig:TwistDehn}). 
Thus, both projections will give the same image by $F''$. 

The generators $\omega_1, ..., \omega_{{\genre}-1}$ of the braid group on ${\genre}$ strands, mentioned in Theorem \ref{th:Artin}, can be represented by the following diffeomorphisms (see Figure \ref{fig:GenerateurTresses}). 
Let $W_1, \allowbreak ..., W_{\genre}$ be closed connected neighborhoods in $ \mathbb{R}^2$ of the pairs $(\alpha_1 \cup \gamma_1), ..., (\alpha_{\genre} \cup \gamma_{\genre})$, respectively. 
For all $i,j \in \{1, ..., {\genre}\}$, we consider $V_i$ a ring ($\simeq \mathbb{S}^1 \times ]-\epsilon, \epsilon[$ with $\epsilon >0$) such that $(W_i \cup W_{i+1}) \subset V_i$ and $V_i \cap W_j = \emptyset$ for all $j \not \in \{i, i+1\}$.
Each diffeomorphism representing a generator $\omega_i$ is isotopic to the identity on $\operatorname{Diff}(\mathbb{R}^2)$ by an isotopy of $\mathbb{R}^2$. 
This isotopy is the identity on $\mathbb{R}^2  \setminus V_i$ and rotates $W_i$ and $W_{i+1}$ inside $V_i$ by an angle $t \in [0, \pi]$, while keeping the $\alpha$ curves horizontal. 
(Each diffeomorphism is isotopic to the identity inside the diffeomorphisms of $\mathbb{R}^2$, but not inside those of $ \mathbb{R}^2  \setminus \underset{i \in \{1, ..., {\genre}\}, j \in \{1,2\}}{\bigcup} \mathring{D_i^j} $.)

The action of a generator $\omega_i$ on a standard projection translates graphically as shown in Figure \ref{fig:GenerateurTresses} (where the vertical segment $\beta$ represents multiple parallel strands of $\beta$). 
\begin{figure}[ht]
\centering
\includegraphics[scale=0.68]{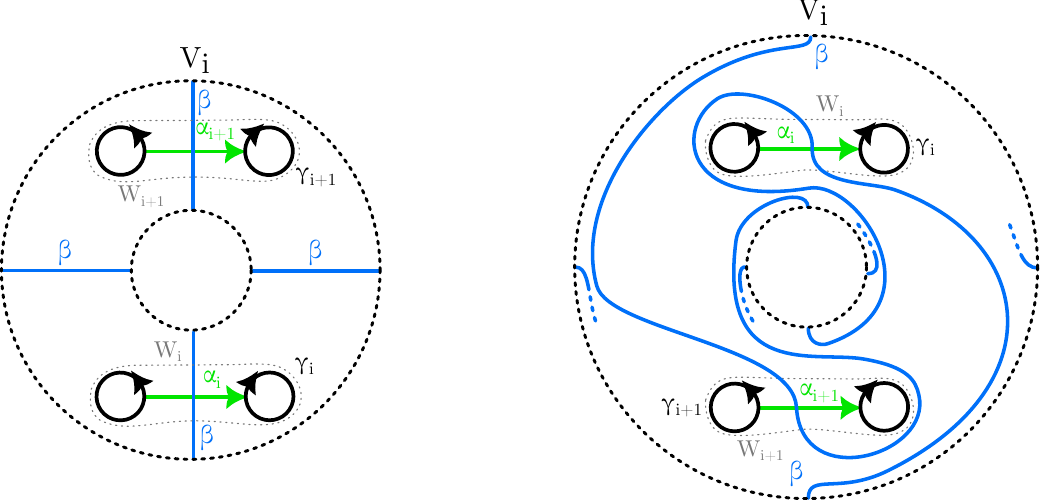}
\caption{effect of $\omega_i$ on a diagram's projection $(\Sigma, \alpha, \beta)$ (initial diagram on the left, effect of $\omega_i$ on the right)}\label{fig:GenerateurTresses}
\end{figure}
If a strand of $\beta$ (other than those represented in Figure \ref{fig:GenerateurTresses}) intersects the neighborhood $V_i$, we can make $V_i$ "sufficiently thin" for that strand to simply cut across $V_i$. 
It will be subject to an isotopy on $V_i$, which, as seen, previously, does not influence its image by $F''_H$. 

If a strand of $\beta_k$ intersecting $\alpha_i$ or $\alpha_{i+1}$ in Figure \ref{fig:GenerateurTresses} is oriented upward, no bead will appear on its extrema. 
If the strand of $\beta$ intersecting $\alpha_i$ (respectively $\alpha_{i+1}$) is oriented downward, before its intersection with $\alpha_i$ (resp. after its intersection with $\alpha_{i+1}$) a bead $g^{-1}$, and then a bead $g$, will appear consecutively on the right figure (Figure \ref{fig:GenerateurTresses}). 
During the bead multiplication, it will result in a trivial bead $1_H$. 
Hence, both projections will give the same image by $F''_H$. 

Therefore, $F''_H$ is independent of the choice of the standard projection used to send the diagram onto $\mathbb{R}^2$ with $2{\genre}$ disks removed. 

\end{proof}

\begin{propo}\label{prop:ChangOrientation}
The map $F''$ is invariant under orientation reversal of a $\beta$ curve as well as invariant under orientation reversal of an $\alpha$ curve and its associated $\gamma$ curve. 
\end{propo}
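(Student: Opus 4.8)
The plan is to work directly with the bead algorithm defining $F''$, tracking how the total beads $\perleTot_i$ change under each reversal, and then to invoke the three defining properties of the symmetrized integral $\mu$: its cyclic invariance $\mu(xy)=\mu(yx)$, the identity $\mu\circ S=\mu$, and the identity $\mu(g\,\varspadesuit\,g^{-1})=\mu(\varspadesuit)$ (which follows from $S^2(x)=gxg^{-1}$ together with $\mu\circ S^2=\mu$, itself a consequence of $\lambda\circ S^2=\lambda$). Throughout I would also use the unimodular identity $\bigotimes_j S^2(\Lambda_{(j)})=\bigotimes_j\Lambda_{(j)}$ already exploited in the proof of Proposition \ref{ExistF''_H}.

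First I would handle the reversal of a single $\beta$ curve $\beta_i$. Reversing its orientation has three effects on the word read off $\beta_i$: the reading order is reversed; each geometric extremum is now traversed in the opposite direction, so by the conventions of Figure \ref{fig:PlaceG} its bead $g^{\pm1}$ is replaced by $g^{\mp1}=S(g^{\pm1})$; and the sign of each crossing $\alpha_k\cap\beta_i$ flips, so by Figure \ref{fig:PerlesKAmod} the bead $S^{d}(\Lambda_{(j)})$ becomes $S^{1-d}(\Lambda_{(j)})$. Since $S$ is an anti-automorphism, reversing the order and applying $S$ bead-by-bead would turn $\perleTot_i$ into $S(\perleTot_i)$; indeed the extremal beads and the crossing beads with $d=0$ already obey this rule exactly, whereas a crossing bead with $d=1$ becomes $S^{1-1}(\Lambda_{(j)})=\Lambda_{(j)}$ instead of the expected $S(S(\Lambda_{(j)}))=S^2(\Lambda_{(j)})$, i.e. differs from the $S$-image by the factor $S^{-2}=g^{-1}(\varspadesuit)g$. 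I would then argue that these local conjugations reorganize, against the placement of the pivotal $g$-beads at the extrema of $\beta_i$, into a single overall conjugation by a power of $g$, after which $\mu(\perleTot_i^{\mathrm{new}})=\mu(g^{\pm1}S(\perleTot_i)g^{\mp1})=\mu(S(\perleTot_i))=\mu(\perleTot_i)$, every other factor of the global Sweedler sum being untouched.

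Next I would treat the simultaneous reversal of an $\alpha$ curve $\alpha_k$ and its associated $\gamma_k$. Reversing $\gamma_k$ together with $\alpha_k$ is precisely what preserves standardness, since $(-\alpha_k)\cdot(-\gamma_k)=\alpha_k\cdot\gamma_k=+1$, so $(\Sigma_{\genre},\alpha,\gamma)$ remains standard and the whole construction of $F''$ still applies. Reversing $\alpha_k$ reverses the order in which its crossings are encountered, hence reverses the order of the Sweedler factors of $\Delta^{(n_k)}(\Lambda)$, and flips the sign of every crossing lying on $\alpha_k$. Writing $\rho$ for the tensor order-reversal, the anti-coalgebra property $\Delta^{(n_k)}\circ S=(S^{\otimes n_k})\circ\rho\circ\Delta^{(n_k)}$ identifies the reversed, $S$-twisted distribution of cointegral beads with $\Delta^{(n_k)}(S(\Lambda))$; using $S(\Lambda)=\Lambda$ (valid since, for unimodular $H$, $S(\Lambda)$ is again a two-sided cointegral, hence a scalar multiple of $\Lambda$, and $S^2(\Lambda)=\Lambda$ pins the scalar to $1$) the new bead tensor equals the old one up to exactly the same $S^{\pm2}$, i.e. $g^{\pm1}$-conjugation, corrections at the sign-flipped crossings, which I would absorb as in the $\beta$ case.

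The hard part will be, in both cases, the bookkeeping of the $S^{\pm2}$ discrepancies at the crossings whose sign changes. These sit at a subset of the factors of each coproduct $\Delta^{(n_k)}(\Lambda)$ and therefore cannot be cleared by a single application of the tensor identity $\bigotimes_j S^2(\Lambda_{(j)})=\bigotimes_j\Lambda_{(j)}$; the delicate point is to show that, once the reversal is realized bead-by-bead as $x\mapsto S(x)$, the residual conjugations by $g^{\pm1}$ telescope along $\beta_i$ against the pivotal $g$-beads into one global conjugation, under which $\mu$ is invariant. Everything else — the base-point independence guaranteed by $\mu(xy)=\mu(yx)$, and the fact that curves not meeting the reversed handle contribute unchanged factors — is routine, so the proof reduces to this local pivotal computation.
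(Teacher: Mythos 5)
Your overall strategy coincides with the paper's: for a $\beta$-reversal, show that the reversed total bead is $S(\perleTot_i)$ (up to a possible $g$-conjugation) and conclude with $\mu\circ S=\mu$ and cyclicity; for an $\alpha$-reversal, use $\Delta^{(n)}(S(\Lambda))=S(\Lambda_{(n)})\otimes\cdots\otimes S(\Lambda_{(1)})$ together with $S(\Lambda)=\Lambda$, absorbing the $S^{\pm2}$ discrepancies at sign-flipped crossings into $g^{\pm1}$ beads. However, you explicitly defer the step you yourself call the hard part --- showing that the local $S^{\pm2}$ corrections telescope against the pivotal beads --- and the one local rule you state as input to it is incorrect: with the conventions of Figure \ref{fig:PlaceG} only two of the four oriented extrema carry a nontrivial bead, so reversing $\beta_i$ exchanges beaded and unbeaded extrema ($g^{\pm1}\leftrightarrow 1_H$) rather than sending $g^{\pm1}$ to $g^{\mp1}=S(g^{\pm1})$. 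Starting from the rule $g^{\pm1}\mapsto g^{\mp1}$, the bookkeeping does not close up: it leaves spurious $g^{\pm1}$-conjugations acting on only some tensor factors of $\Delta(\Lambda)$, which cannot be removed.

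The paper closes exactly this gap by decomposing $\beta_i$ into descending segments of the form (maximum, $n$ crossings, minimum) and their ascending complements. On ascending segments all crossings are positive and no $g^{\pm1}$ beads occur, so $\overline{x}=S(x)$ is immediate from anti-multiplicativity of $S$; on a descending segment one has, e.g., $x=g^{-1}S(\Lambda_{n,(k)})\cdots S(\Lambda_{1,(j)})g$ while $\overline{x}=\Lambda_{1,(j)}\cdots\Lambda_{n,(k)}$ carries no $g$ beads at all, and $g^{-1}S^{2}(\varspadesuit)g=\varspadesuit$ gives $\overline{x}=S(x)$ again (Figure \ref{fig:SegmentsBDesc}); hence $\overline{\perleTot_i}=S(\perleTot_i)$ exactly, with no residual global conjugation. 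For the $\alpha$-reversal the compensating $g^{\pm1}$ beads are not pre-existing extremal beads: they are created by the half-twist relating the two standard projections (Figure \ref{fig:DemiTwistDehn}), appear only on downward strands, and conjugate the crossing bead into $S(S(\Lambda)_{(i)})=S(\Lambda_{(i)})$, matching the original. You would need to make both of these mechanisms explicit, and correct the extremal-bead rule, for your telescoping claim to become a proof.
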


\begin{proof}

Let $D=(\Sigma_{\genre}, \alpha, \beta, \PtInf, \gamma)$ be a flat virtual Heegaard diagram.
Let us consider a virtual Heegaard diagram $(\Sigma_{\genre}, \alpha, \overline{\beta})$ differing from $(\Sigma_{\genre}, \alpha, \beta)$ only over the orientation of a curve $\beta_i$ (we will denote $\overline{\beta_i} \in \overline{\beta}$ the curve $\beta_i$ with reverse orientation). 
The diagram $\overline{D}=(\Sigma_{\genre}, \alpha, \overline{\beta}, \PtInf, \gamma)$ obtained this way is also a flat virtual Heegaard diagram.
We project $D$ and $\overline{D}$ onto the plane with the same standard projection $\psi$.  

We start by decomposing $\beta_i$ into segments over which we will observe the effects of the orientation reversal.
We divide $\beta_i \in \beta$ into two types of segments, 
\begin{enumerate}
\item a first type being segments containing only a local maximum, then a number $n \in \mathbb{N}$ of crossings between $\beta_i$ and $\alpha$ (potentially with crossing between $\beta_i$ and $\beta$ in between), and then a local minimum,
\item a second type being segments forming the complement over $\beta_i$ of first-type segments. 
\end{enumerate}

During the computation of $F''$, the symmetrized integral's property $\mu(zy) = \mu(yz)$ allows us to assume that the base points of $\beta_i$ and $\overline{\beta_i}$ are at the end of a segment.
Therefore, during the bead collection, we can consider one bead per segment.
On a segment, we denote by $x \in H$ the collected bead on $\beta_i$ and by $\overline{x} \in H$ the collected bead on $\overline{\beta_i}$. 

By definition, $\beta_i$ is oriented upward on second-type segments, hence, only beads originating from positive crossings of $\beta_i$ with $\alpha$ will appear on these segments. 
Thus, the definition of beads on crossings, together with the relation $S(x) S(y) = S(yx)$, allows us to see that $\overline{x} = S(x)$. 

Over first-type segments, we notice that every crossing of $\beta_i$ with $\alpha$ will be negative. 
The relation $\overline{x} = S(x)$ does not directly appear but can be obtained by properties of the antipode and pivot. 

\begin{figure}[ht]
\centering
\includegraphics[scale=0.45]{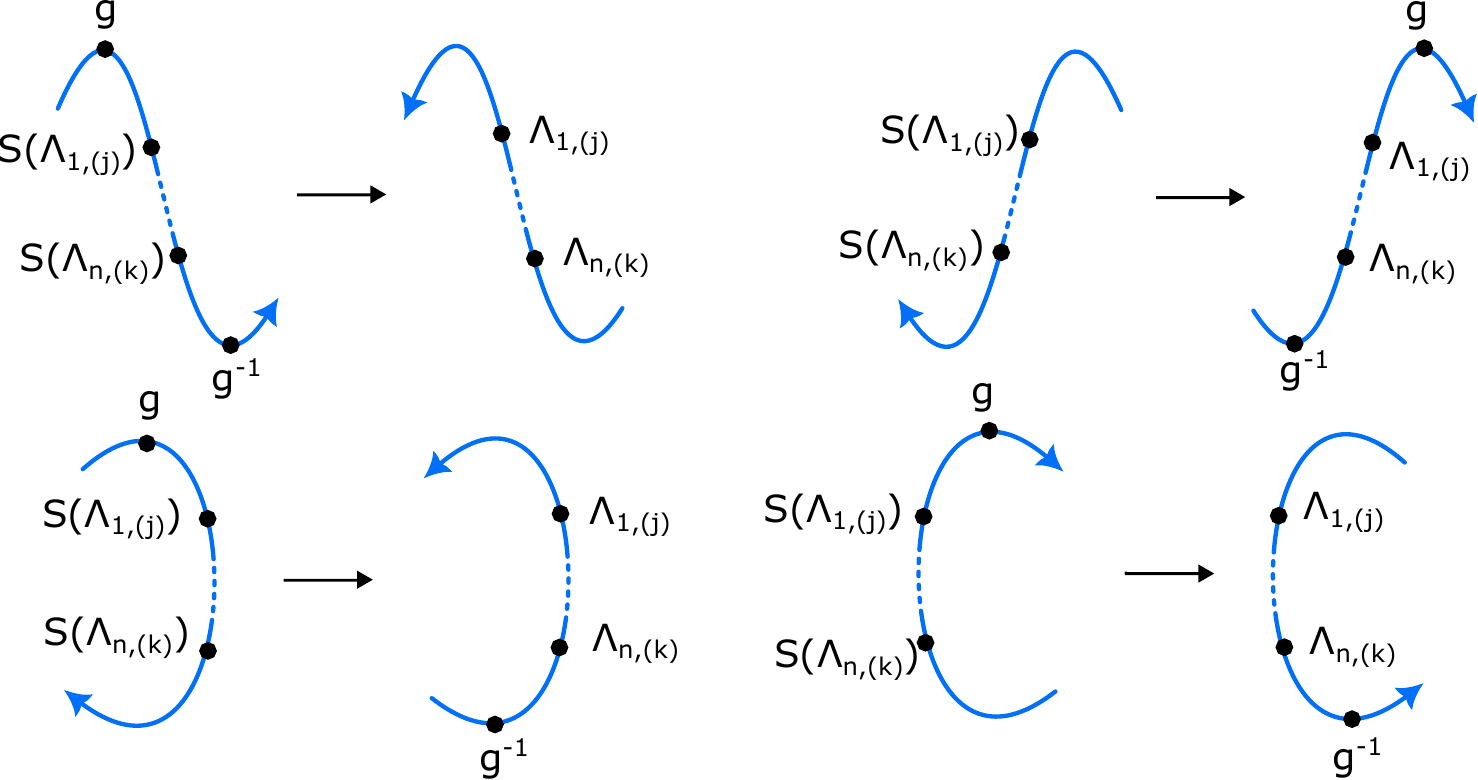}
\caption{Orientation reversal on first-type segments (with $n$ beads originating from crossings between $\beta_i$ and $\alpha$). }\label{fig:SegmentsBDesc}
\end{figure}

For example, in the case represented in the upper left side of Figure \ref{fig:SegmentsBDesc}, we notice that $x = g^{-1}S(\Lambda_{n,(k)}) \cdots S(\Lambda_{1,(j)})g$ and $\overline{x} = \Lambda_{1,(j)} \cdots \Lambda_{n,(k)}$ where each $\Lambda_{p,(q)}$, $p, q \in \mathbb{N}$, originates from a $\Lambda_p= \Lambda$ by coproduct, as indicated in notation \eqref{Not:Sweedler}. 
However, 
$$ S(g^{-1}S(\Lambda_{n,(k)}) \cdots S(\Lambda_{1,(j)})g) = g^{-1}S^2(\Lambda_{1,(j)}) \cdots S^2(\Lambda_{n,(k)})g = \Lambda_{1,(j)} \cdots \Lambda_{n,(k)}. $$ 
We then recover the relation $\overline{x} = S(x)$ for that case.
All other cases are represented in Figure \ref{fig:SegmentsBDesc} and result in the same relation through analogous reasoning. 
Therefore, every bead originating from a first type segment verifies the relation $\overline{x} = S(x)$. 

Let $\perleTot_i$ and $\overline{\perleTot_i}$ be the total beads of $\beta_i$ and $\overline{\beta_i}$, respectively. 
Since $\mu \circ S = \mu$, we deduce that $\mu(\perleTot_i) = \mu (\overline{\perleTot_i})$, which allows us to conclude the invariance of $F''$ under reversal of a $\beta$ curve's orientation. 

\vspace{5pt}

Let us consider this time a diagram $\overline{D}$ differing from $D$ only over the orientation of a curve $\alpha_i$ (we will denote $\overline{\alpha_i} \in \overline{\alpha}$ the curve $\alpha_i$ with reverse orientation). 
The family of curves $\overline{\gamma}$ associated to $\overline{D}$ will be identical to $\gamma$ except on the curve $\gamma_i$ where it will have reverse orientation (we will denote $\overline{\gamma_i} \in \overline{\gamma}$ the curve $\gamma_i$ with reverse orientation). 
We can then consider standard projections such that the images of $D$ and $\overline{D}$ differ only on an open disk such as represented in Figure \ref{fig:DemiTwistDehn}.

\begin{figure}[ht]
\centering
\includegraphics[scale=0.7]{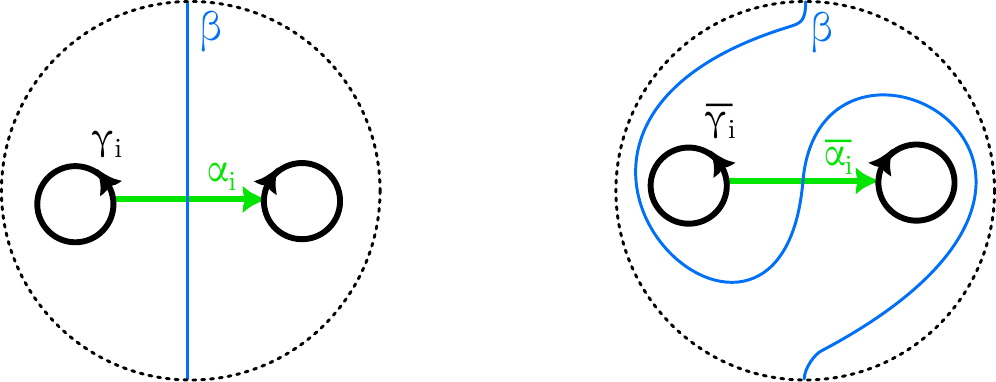}
\caption{Impact on the image in the plane of an orientation reversal on a curve $\alpha_i$ (here $\beta$ represent multiple parallel $\beta$ curves) }\label{fig:DemiTwistDehn}
\end{figure}

Here, we will use the property $\Delta^{(n)} (S(x)) = S(x_{(n)}) \otimes \cdots \otimes S(x_{(1)})$ from the coproduct and antipode. 
If a strand from $\beta$ is oriented upward, then no bead $g^{\pm 1}$ appears after the change.
However the bead $\Lambda_{(i)}$ at the crossing becomes $S(\Lambda_{(l-i+1)}) = S(\Lambda)_{(i)}$, with $l \in \mathbb{N}$ the number of crossings of $\alpha_i$ with $\beta$. 
If a strand of $\beta$ is oriented downward, then a bead $g$ and a bead $g^{-1}$ appear before and after the crossing, respectively. 
Furthermore, the bead $S(\Lambda_{(i)})$ at the crossing becomes $\Lambda_{(l-i+1)} = S^{-1}(S(\Lambda)_{(i)})$.
The product of these three beads is $g\Lambda_{(l-i+1)}g^{-1}=S(S(\Lambda)_{(i)})$ as $g$ is a pivot of $H$.
Since $H$ is unimodular we have $S(\Lambda) = \Lambda$, which induces that, for all $n \in \mathbb{N}$, $\Lambda_{(1)} \otimes \Lambda_{(2)} \otimes \cdots \otimes \Lambda_{(n)} = S(\Lambda_{(n)}) \otimes \cdots \otimes S(\Lambda_{(2)}) \otimes S(\Lambda_{(1)})$. 
Therefore, both projections will give the same image by $F''$, thus $F''_H$ is invariant under orientation reversal of an $\alpha$ curve and its associated $\gamma$ curve. 

\end{proof}
The map $F''_H$ can be extended to all virtual Heegaard diagrams as follows. 

\begin{propo}
Let $D=(\Sigma_{\genre}, \alpha, \beta, \PtInf, \gamma)$ be a standardized virtual Heegaard diagram. 
We define the image of $D$ under $F''_H$ as follows
$$ F''_H(D) := F''_H(\Sigma_{\genre}, \alpha, \beta', \gamma,  \PtInf ) $$
where $((\Sigma_{\genre}, \alpha, \beta'),\PtInf)$ is a $\gamma$-flattening of $((\Sigma_{\genre}, \alpha, \beta),\PtInf)$. 
The image $ F''_H(D)$ is independent of the choice of $\gamma$-flattening. 
\end{propo}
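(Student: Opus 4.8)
The plan is to reduce the claim to the invariance of $F''_H$ under a fixed list of local moves and then to check that invariance one move at a time, leaning on the computations already carried out. Fix the standardized data $(\Sigma_{\genre},\alpha,\PtInf,\gamma)$ and let $\beta'$ and $\beta''$ be two $\gamma$-flattenings of $\beta$. By the remark following the definition of $\gamma$-flattening, $\beta'$ and $\beta''$ are joined by a finite sequence of $R_{II}$ moves (between two $\beta$ curves or between a $\beta$ and an $\alpha$ curve), $R_{III}$ moves between $\beta$ curves, $G_{\gamma}$ moves, and isotopies of the pointed surface $(\Sigma_{\genre},\PtInf)$, each supported in an open set avoiding $\PtInf$. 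Since both endpoints are $\gamma$-flat, I would first observe that each generator can be realized staying in the complement of $\gamma$ (the $G_{\gamma}$ move being the one pictured in Figure \ref{fig:MouvDoublGliss}), so that the whole sequence lives inside $\mathfrak{D}$ and $F''_H$ is defined at every intermediate step; it then suffices to prove that $F''_H$ is unchanged by each individual move.

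Most of these moves have in fact already been treated. By the general principle isolated in the proof of Proposition \ref{ExistF''_H} — two elements of $\mathfrak{D}$ with the same $\alpha\cap\beta$ crossing data (positions, orders and signs) and the same configuration of extrema on the $\beta$ curves have equal image under $F''_H$ — any move affecting neither the $\beta$–$\alpha$ crossings nor the local maxima and minima of $\beta$ preserves $F''_H$. This disposes of the $R_{II}$ and $R_{III}$ moves among $\beta$ curves, the $G_{\gamma}$ moves, and the pointed isotopies, exactly as in Proposition \ref{ExistF''_H}.

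The one move in the list that genuinely alters the crossing data is the $R_{II}$ move between a $\beta$ curve and an $\alpha$ curve, which creates on some $\alpha_k$ a pair of consecutive crossings of opposite sign. Here I would argue directly with beads. Writing $n$ for the number of crossings of $\alpha_k$ and distributing $\Delta^{(n)}(\Lambda)$ over them, the two new crossings receive two consecutive factors which, by coassociativity and the sign conventions of Figure \ref{fig:PerlesKAmod} (together with the pivot conjugations already used in the proof of Proposition \ref{prop:ChangOrientation}), are of the form $\Lambda_{(j)}$ and $S(\Lambda_{(j+1)})$ read along the relevant $\beta_i$. The antipode axiom in the form $m(\Id\otimes S)\Delta(\Lambda)=\epsilon(\Lambda)1_H$ then collapses this pair to $\epsilon(\Lambda)1_H$, restoring $\Delta^{(n)}(\Lambda)$ on the remaining crossings and producing only a scalar $\epsilon(\Lambda)$ together with a trivial bead $1_H$.

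The main obstacle, and the point I would treat most carefully, is the bookkeeping of this scalar against the normalization factor $\epsilon(\Lambda)^{v}$ in the definition of $F''_H$. When the move turns an $\alpha_k$ with $\alpha_k\cap\beta=\emptyset$ into one meeting $\beta$, the exponent $v$ drops by one, which cancels the freshly produced factor $\epsilon(\Lambda)$; when $\alpha_k$ already met $\beta$, the inserted bead is the trivial $1_H$, $v$ is unaffected, and $\prod_i\mu(\perleTot_i)$ is literally unchanged. In either case $F''_H$ is preserved. Concatenating the invariances along the connecting sequence yields $F''_H(\Sigma_{\genre},\alpha,\beta',\gamma,\PtInf)=F''_H(\Sigma_{\genre},\alpha,\beta'',\gamma,\PtInf)$, so $F''_H(D)$ does not depend on the chosen $\gamma$-flattening.
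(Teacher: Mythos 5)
Your proposal is correct and follows essentially the same route as the paper: reduce to the list of connecting moves, dispose of the $\beta$--$\beta$ Reidemeister moves, $G_{\gamma}$ moves and pointed isotopies via the crossing-data/extrema principle from Proposition \ref{ExistF''_H}, and then handle the $R_{II}$ move between an $\alpha$ and a $\beta$ curve by collapsing the two new coproduct factors with the antipode axiom (using the pivot to commute past an intervening $g^{\pm1}$ bead), with the $\epsilon(\Lambda)^{v}$ bookkeeping matching the paper's $l=2$ case. The paper merely spells out the two orientation subcases of that last move in more detail than you do.
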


\begin{proof}
Let $D=(\Sigma_{\genre}, \alpha, \beta, \PtInf, \gamma)$ be a standardized virtual Heegaard diagram. 

For the image of $F''_H(D)$ to be well defined, it is necessary that $F''_H$ has the same image for any $\gamma$-flattening of $((\Sigma_{\genre}, \alpha, \beta), \PtInf)$. 
We have mentioned that, by definition, two ($\gamma$-)flattenings of the same virtual Heegaard diagram are linked by a sequence of $R_{II}$ moves between $\beta$ curves or between $\beta$ and $\alpha$ curves, $R_{III}$ moves between $\beta$ curves, isotopies of the pointed surface $(\Sigma_{\genre},\PtInf)$, and $G_{\gamma}$ moves, with each move operated within an open set not containing $\PtInf$. 
Hence, we want to prove that $F''_H$ is invariant by these operations. 
We also recall (seen in the proof of Proposition \ref{ExistF''_H}) that only operations which alter the crossings between $\alpha$ and $\beta$ curves, or the $\beta$ curves' extrema, can change the image under $F''_H$. 
\vspace{5pt}

Two standardized virtual Heegaard diagrams differing by an isotopy of the pointed surface $(\Sigma_{\genre}, \PtInf)$ admit two standard projections differing by an isotopy of the plane leaving $\alpha \cup \gamma$ fixed.
However, we have also shown (in proof of Proposition \ref{ExistF''_H}) that $F''_H$ is invariant by isotopies of the plane leaving $\alpha \cup \gamma$ fixed, by $G_{\gamma}$ moves, and by $R_{III}$ and $R_{II}$ moves between $\beta$ curves in $\Sigma \setminus (\alpha \cup \gamma \cup \PtInf)$. 
Therefore, two flattenings differing by these moves will have the same image. 

We now have only to show that $F''$ is invariant by $R_{II}$ moves between $\beta$ and $\alpha$ curves. 

\vspace{5pt}
Let $D=(\Sigma_{\genre}, \alpha, \beta, \PtInf, \gamma)$ and $D'= (\Sigma_{\genre}, \alpha, \beta', \allowbreak\PtInf, \gamma)$ be two standardized virtual Heegaard diagrams, identical except on an open set $V$ where they differ by an $R_{II}$ move as shown in Figure \ref{fig:MouvR_2alpha}.

\begin{figure}[ht]
\centering
\includegraphics[scale=0.5]{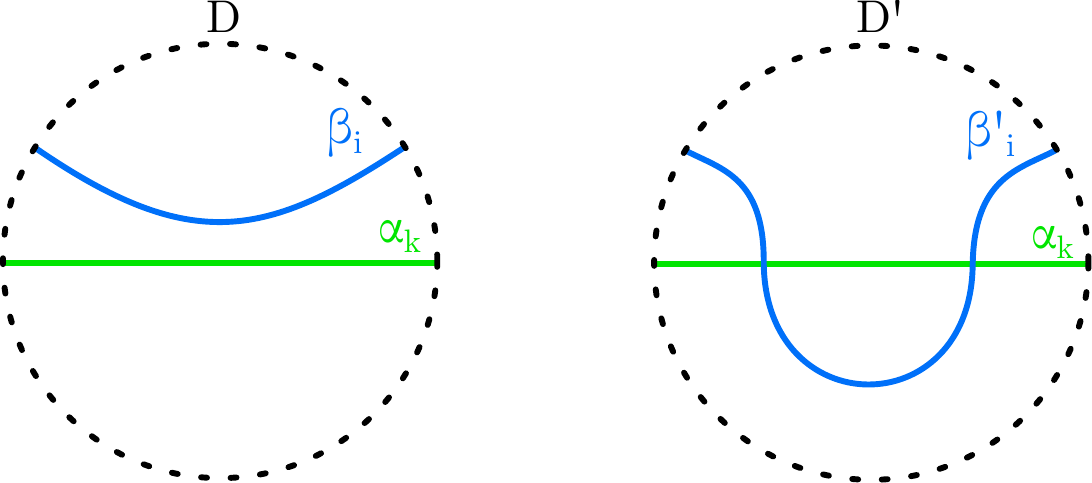}
\caption{$R_{II}$ move between an $\alpha$ curve and a $\beta$ curve}\label{fig:MouvR_2alpha}
\end{figure}

Let us consider a component $\beta_i \in \beta$, $i \in \{1, ..., \genre\}$, we denote by $\perleTot_i$ the total bead recovered on the left diagram (Figure \ref{fig:MouvR_2alpha}) and $\perleTot'_i$ the total bead recovered on the right diagram.
If the curve $\beta_i$ is oriented from right to left, then no bead $g$ appears on the diagrams. 
The curve $\alpha_k \in \alpha$, $k \in \{1, ..., \genre\}$, involved will always be oriented from left to right, by definition of a standard projection. 
We denote by $m \in \mathbb{N}$ the integer such that the $R_{II}$ move between $\alpha_k$ and $\beta_i$ is performed just after $m-1$ crossings. 
Hence, let $A_{i,m}$ and $B_{i,m}$ be the products of collected beads on $\beta_i$ before and after the considered open set, respectively, we have, for the left diagram, $\perleTot_i = B_{i,m} A_{i,m}$ and, for the right diagram, $\perleTot_i' = B_{i,m} (\Lambda_{k,(m)})_{(1)} S((\Lambda_{k,(m)})_{(2)}) A_{i,m}$. 

By definition of the antipode, $(\Lambda_{k,(m)})_{(1)} S((\Lambda_{k,(m)})_{(2)}) = \epsilon(\Lambda_{k,(m)})$ and, by property of the coproduct, for all $n \geq 1$ and all $x \in H$, we have $x_{(1)} \otimes \cdots \otimes x_{(m-1)} \otimes \epsilon(x_{(m)}) \otimes x_{(m+1)} \otimes \cdots \otimes x_{(n)} = \Delta^{(n-1)}(x)$. 
Let $l \geq 2$ be the number of crossings between $\beta$ and $\alpha_k$. 
If $l > 2$, the previous equality allows us to recover the action of $\Lambda_k$ on the remaining $l-2$ crossings with $\alpha_k$. 
If $l = 2$, the equality produces the term $\epsilon(\Lambda_k)=\epsilon(\Lambda)$ which is the term added to the final product when a curve $\alpha_k$ does not intersect $\beta$. 
Thus we find the same images by $F''_H$ for the right diagram and for the left diagram (in Figure \ref{fig:MouvR_2alpha}).

\vspace{5pt}

If the curve $\beta_i$ is oriented from left to right, then a bead $g^{-1}$ appears on its local minimum. 
In the same way as before, we get for the left diagram the total bead $\perleTot_i = B_{i,m} g^{-1} A_{i,m}$ and for the right diagram the total bead $\perleTot_i' = B_{i,m} (\Lambda_{k,(m)})_{(2)} g^{-1} \allowbreak S((\Lambda_{k,(m)})_{(1)}) A_{i,m}$. 
By definition of the antipode $S$ and pivot $g$, we can simplify the bead $\perleTot_i'$. 
\begin{align*}
B_{i,m} (\Lambda_{k,(m)})_{(2)} g^{-1} S((\Lambda_{k,(m)})_{(1)}) A_{i,m} & = B_{i,m} g^{-1} S^2((\Lambda_{k,(m)})_{(2)})  S((\Lambda_{k,(m)})_{(1)}) A_{i,m}\\
& = B_{i,m} g^{-1} S((\Lambda_{k,(m)})_{(1)}  S((\Lambda_{k,(i+1)})_{(2)})) A_{i,m} \\
& = B_{i,m} g^{-1} \epsilon(\Lambda_{k,(m)}) A_{i,m}.
\end{align*}

Therefore, the same equalities and reasoning as before apply, and we recover the same final image under $F''_H$ for both diagrams. 

Analogous reasoning also applies for an $R_{II}$ move where the $\beta$ curve is placed under the $\alpha$ curve. 
Hence, we can conclude our proof. 

\end{proof}

\Rq It is strongly suspected, but remains unproven, that the map $F''_H$ on standardized virtual Heegaard diagrams is also independent of the choice of the family of curves $\gamma$.

\subsection{The chromatic spherical invariant $\mathcal{K}$}

In this part, we will prove the link between the $3$-manifolds invariant $\mathcal{K}_{\H-mod}$ defined in \cite{CGPT} (with $\H-mod$ the category of finite-dimensional pivotal $H$-modules)  and the map $F''_H$ previously defined.
Before proving this link, we start by defining $\mathcal{K}_{\H-mod}$ using results from \cite{CGPT}. 

We consider the left $H$-module $H$ defined by the action $L:H \to \operatorname{End}_{\korps}(H)$ giving $L_h(x) = hx$ for all $h,x \in H$. 
In particular, $H$ seen as such is a projective generator of $\H-mod$. 
We denote by $\Proj_H$ the ideal of projective modules from the pivotal category $\H-mod$. 

The definition of $\mathcal{K}_{\H-mod}$ relies on the category of $\H-mod$-colored ribbon graphs over multi-handlebodies as defined below (see \cite{TuraevQuantum}). 
We call \emph{multi-handlebodies} a disjoint union of a finite number of $3$-dimensional oriented handlebodies.

A \emph{$\H-mod$-colored ribbon graph} over a multi-handlebody $\mathfrak{C}$ is a finite graph embedded in $\partial \mathfrak{C}$ with each edge colored with an object of $\H-mod$ and each vertex thickened in $\partial \mathfrak{C}$ into a coupon colored by a morphism of $\H-mod$.  
Every coupon has a top and a bottom.
Since the graphs are considered on an oriented surface, they possess a natural framing with its ribbon structure given by thickening the graph in the surface $\partial \mathfrak{C}$ (see \cite{TuraevQuantum} for more details on ribbon graphs).
We say that a graph is \emph{$\Proj_H$-colored} if all its colors are elements of $\Proj_H$.

\begin{defi}
A \emph{bichrome graph} on a handlebody is a graph on the boundary of a handlebody that is partitioned into two disjoint subgraphs, one red, the other blue, such that the blue subgraph is $\H-mod$-colored and that the red subgraph is a disjoint union of unoriented simple closed curves. 
\end{defi}

For the computation of $\mathcal{K}_{\H-mod}$ which follows, we will consider $\Proj_H$-colored graphs and we can even restrict ourselves to graphs where all edges are colored by $H$.

On bichrome graphs, it is possible to apply a "red to blue" or "chromatic" modification (see Figure \ref{fig:AppChroma}). 
It is a local modification allowing a curve from the red subgraph to pass into the blue subgraph thanks to a category morphism, called chromatic morphism (more details in \allowbreak \cite{CGPT}).

\begin{figure}[ht]
\centering
\includegraphics[scale=0.7]{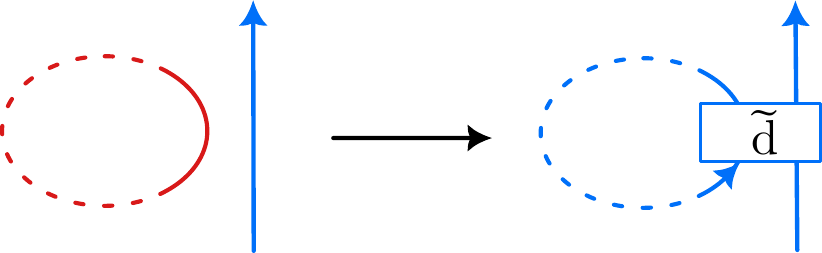}
\caption{Chromatic modification with the chromatic morphism $\tilde{d}$.}\label{fig:AppChroma}
\end{figure}

By Theorem \cite[2.5]{CGPT}, since $H$ is a spherical Hopf algebra, there exists a non-degenerate modified trace $\trace$ (defined in \cite{GPV}) over the ideal $\Proj_H$, and a chromatic morphism $\tilde{d} : H \otimes H  \to H \otimes H$. 
This trace is unique (up to a scalar) and determined by $\trace_H(f) = \mu(f(1))$ for all $f \in \operatorname{End}_H(H)$ (see \cite[Th.1]{Bel_Bla_Gai}).  
We define a chromatic morphism $\tilde{d}: H \otimes H \to H \otimes H$ (introduced in \cite{CGPT}) by 
$$ \tilde{d}(x \otimes y) = \lambda(S(y_{(1)} )gx) y_{(2)} \otimes y_{(3)} \qquad \text{ for } x \otimes y \in H \otimes H.$$

Let $\mathcal{H}_{B}$ be the set of pairs $(\mathfrak{C}, \Gamma)$ with $\mathfrak{C}$ a handlebody and $\Gamma$ a bichrome graph on $\mathfrak{C}$ with a non-empty blue subgraph.
Let $F$ be the pivotal functor associated with Penrose graphical calculus (more details in \cite{TuraevQuantum}).   
\cite[Theorem 2.1]{CGPT} applied to the pivotal category $\H-mod$ allows us to define $F'$, an invariant on $\mathcal{H}_B$, through the following theorem. 

\begin{theorem}\label{Th:DefF'}
There exists a unique map $F' : \mathcal{H}_B \to \korps$ satisfying the following four conditions.  
\begin{enumerate}
\item The element $F'(\mathfrak{C},\Gamma) \in \korps$ depends only on $(\mathfrak{C},\Gamma) \in \mathcal{H}_B$, up to orientation preserving diffeomorphisms. 
\item For all $\Proj_H$-colored ribbon graphs $(B^3, \Gamma)$ on the $3$-ball, we have \\
$F'(B^3,\Gamma) = \trace(F(T))$ where $T$ is a bichrome graph representing an element of $\operatorname{End}_{\korps}(P)$, with $P \in \Proj_H$, admitting $\Gamma$ as its braid closure.  
\item For all $(\mathfrak{C}_1, \Gamma_1), (\mathfrak{C}_2, \Gamma_2) \in \mathcal{H}_B$, we have 
$$ F'(\mathfrak{C}_1 \sqcup \mathfrak{C}_2, \Gamma_1 \sqcup \Gamma_2) = F'(\mathfrak{C}_1, \Gamma_1) F'(\mathfrak{C}_2, \Gamma_2). $$
\item Let $D$ be a $2$-disk disjoint from the red subgraph and intersecting at least one blue edge. 
If we cut $(\mathfrak{C}, \Gamma) \in \mathcal{H}_{\Proj_H}$ along $D$ (as described below), we have $F'(cut_D(\mathfrak{C}), cut_D(\Gamma)) = F'(\mathfrak{C},\Gamma)$. 
\item If $\Gamma, \Gamma' \in \mathcal{H}_B$ are linked by a chromatic modification, then $F'(\Gamma) = F'(\Gamma')$. 
\end{enumerate}
\end{theorem}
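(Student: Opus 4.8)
The plan is to deduce this statement as a direct application of the universal construction of \cite[Theorem 2.1]{CGPT} to the particular pivotal category $\H-mod$. That general theorem produces, from any pivotal $\korps$-linear category equipped with a non-degenerate modified trace on its ideal of projective objects together with a chromatic morphism, a unique invariant of bichrome graphs on handlebodies satisfying exactly the listed compatibility conditions. Accordingly, my proof reduces to checking that the data attached to a spherical Hopf algebra $H$ fit the hypotheses of that theorem; once this is done, the existence, uniqueness, and all of conditions~1--5 follow formally.

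First I would record that $\H-mod$ is a pivotal $\korps$-linear category: the spherical (in particular pivotal) structure on $H$, via the group-like pivot $g$ with $S^2(\varspadesuit) = g \varspadesuit g^{-1}$, equips the category of finite-dimensional modules with a pivotal structure, and $\Proj_H$ is its ideal of projective objects, admitting $H$ (with the left regular action) as a projective generator. Next I would invoke \cite[Theorem 2.5]{CGPT} to obtain a non-degenerate modified trace $\trace$ on $\Proj_H$ in the sense of \cite{GPV}, unique up to scalar, and identify it explicitly through \cite[Th.1]{Bel_Bla_Gai} as $\trace_H(f) = \mu(f(1))$ for $f \in \operatorname{End}_H(H)$, with $\mu$ the symmetrized integral fixed earlier. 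These two ingredients are essentially formal consequences of sphericity and require no new computation.

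The substantive step is to verify that the explicit map $\tilde{d}(x \otimes y) = \lambda(S(y_{(1)})gx)\, y_{(2)} \otimes y_{(3)}$ is a genuine chromatic morphism in the sense of \cite{CGPT}, i.e. that it satisfies the coherence and compatibility axioms required to perform the red-to-blue modification of Figure \ref{fig:AppChroma} consistently. This is the main obstacle, and it is purely Hopf-algebraic: I expect to check the defining identities by a Sweedler-notation computation, using that $\lambda$ is a right integral, that $g$ is the pivot with $S(g)=g^{-1}$, the unimodularity relation $\lambda \circ S^2 = \lambda$, and the spherical identity $\lambda \circ S = \lambda(g^2\varspadesuit)$ recalled above. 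The genuinely delicate point is the compatibility of $\tilde{d}$ with the modified trace $\trace$, since this is what guarantees the invariance under chromatic modification encoded in condition~5.

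Finally, with $\H-mod$ pivotal, $\trace$ a non-degenerate modified trace on $\Proj_H$, and $\tilde{d}$ a chromatic morphism, I would apply \cite[Theorem 2.1]{CGPT} verbatim to obtain the unique map $F' : \mathcal{H}_B \to \korps$ satisfying conditions~1--5. Uniqueness is inherited from the cited theorem: any two maps satisfying the normalization~2 on the ball and the cutting property~4 must agree on all of $\mathcal{H}_B$, since every bichrome graph on a handlebody can be reduced, via the admissible moves and chromatic modifications, to a $\Proj_H$-colored graph inside a ball whose value is then forced by $\trace$.
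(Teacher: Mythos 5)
Your proposal matches the paper's treatment: the paper likewise obtains Theorem \ref{Th:DefF'} by applying \cite[Theorem 2.1]{CGPT} to the pivotal category $\H-mod$, with the non-degenerate modified trace and the chromatic morphism supplied by \cite[Theorem 2.5]{CGPT} and \cite[Th.1]{Bel_Bla_Gai}. The only difference is that you propose to re-verify by hand that $\tilde{d}$ is a chromatic morphism, whereas the paper delegates this too to \cite{CGPT}, where that formula is introduced; this does not change the argument.
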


We now define what \emph{cutting} a multi-handlebody and a bichrome graph means for us. 
Cutting an element $(\mathfrak{C},\Gamma) \in \mathcal{H}_B$ along a disk $D$ (with $\partial D \subset \partial \mathfrak{C}$) gives a new multi-handlebody and a new graph that we will denote $cut_D(\mathfrak{C})$ and $cut_D(\Gamma)$, respectively. 
The graph $cut_D(\Gamma)$ is obtained from $\Gamma$ by replacing the intersection $\partial D \cap \Gamma$ by a pair of coupons colored by the morphisms $\sum_i x_i \otimes_{\korps} y_i$ with $\{x_i\}_i$ a basis of $\operatorname{Hom}_{\H-mod}(\mathbb{1}, P)$ and $\{y_i\}_i$ the dual basis of $\operatorname{Hom}_{\H-mod}(P, \mathbb{1})$ with regards to the pairing $(x,y) \in \operatorname{Hom}_{\H-mod}(\mathbb{1}, P) \times \operatorname{Hom}_{\H-mod}(P, \mathbb{1}) \mapsto \trace_P(xy)$, where $P \in \Proj_H$ is the projective element present on the intersection $\Gamma \cap \partial D$.  
The multi-handlebody $cut_D(\mathfrak{C})$ is obtained by cutting out a tubular open neighborhood of the disk $D$ from $\mathfrak{C}$, i.e. $cut_D(\mathfrak{C}) = \mathfrak{C} \setminus (V(D))$ with $V(D) \simeq D  \: \times \: ] - \epsilon, \epsilon[$, for $\epsilon > 0$. 
We can see this operation represented in Figure \ref{fig:Coupure}. 

\begin{figure}[ht]
\centering
\includegraphics[scale=1]{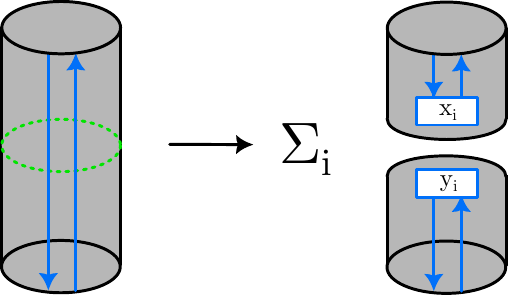}
\caption{A cutting operation}\label{fig:Coupure}
\end{figure}

Let $M$ be a $3$-dimensional connected oriented closed manifold and $M= \mathfrak{C}_\alpha \cup \mathfrak{C}_\beta$ be a Heegaard splitting of $M$. 
We denote by $(\Sigma_{\genre}, \alpha, \beta)$ the associated Heegaard diagram, where $\alpha$ and $\beta$ are sets of curves associated to the handlebodies $\mathfrak{C}_\alpha$ and $\mathfrak{C}_\beta$, respectively. 
Let us consider a ribbon graph $O_H \subset \Sigma \setminus (\alpha \cup \beta)$ defined as the braid closure on a disk of a coupon colored by $h \in \operatorname{End}_{\H-mod}(H)$ such that $F'(O_H) = 1$. 
We call $\Gamma$ the bichrome diagram on the surface $\Sigma_{\genre} = \partial\mathfrak{C}_\alpha = \partial\mathfrak{C}_\beta$ formed by the set of curves $\beta$ considered as red curves and the graph $O_H$ mentioned above seen as the blue subgraph. 

\begin{theorem}\cite[Th. 2.4]{CGPT}
If $(\mathfrak{C}, \Gamma)$ is a bichrome diagram obtained as described above from a $3$-dimensional connected oriented closed manifold, then its image $F'(\mathfrak{C}, \Gamma) \in \korps $ depends only on the diffeomorphism class of $M$. 
We can then define the chromatic spherical invariant $\mathcal{K}_{\H-mod}$ by 
$$  \mathcal{K}_{\H-mod} (M) = F'(\mathfrak{C}, \Gamma).  $$
\end{theorem}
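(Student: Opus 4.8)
The plan is to show that $F'(\mathfrak{C},\Gamma)$ depends neither on the Heegaard splitting chosen to represent $M$ nor on the auxiliary blue graph $O_H$. By the Reidemeister--Singer theorem recalled above, any two Heegaard diagrams of a fixed $3$-manifold $M$ are related by a finite sequence of isotopies of $\Sigma_{\genre}$, handle-slides among the $\alpha$ curves, handle-slides among the $\beta$ curves, and stabilizations/destabilizations (orientation reversals being irrelevant here, since the red subgraph consists of unoriented curves). It therefore suffices to prove that $F'(\mathfrak{C},\Gamma)$ is unchanged under each of these moves and under replacing $O_H$ by another normalized blue graph; invariance under orientation-preserving diffeomorphisms is then exactly condition $1$ of Theorem \ref{Th:DefF'}.

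First I would dispose of the easy moves. Isotopies of $\Sigma_{\genre}$ extend to orientation-preserving diffeomorphisms of $\mathfrak{C}$ carrying $\Gamma$ to its image, so condition $1$ applies directly. Handle-slides among the $\alpha$ curves do not alter the pair $(\mathfrak{C},\Gamma)$ at all: the $\alpha$ curves do not appear in $\Gamma$, and a slide of one $\alpha$ curve over another produces a new system of meridian disks bounding in the \emph{same} handlebody $\mathfrak{C}=\mathfrak{C}_\alpha$. Independence of the choice of $O_H$ follows by isolating $O_H$ inside a ball disjoint from the red curves: cutting along its boundary and using the multiplicativity property (condition $3$) factors out the scalar $F'(O_H)$, which equals $1$ by the normalization $F'(O_H)=1$; hence any two admissible choices give the same value.

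The main work, and the step I expect to be the obstacle, is invariance under handle-slides of the red $\beta$ curves. Here the $\alpha$-slide argument fails, because sliding $\beta_i$ over $\beta_j$ genuinely changes the red subgraph. The strategy is to pass through the blue world: in a neighborhood of the slide I would apply the chromatic modification (condition $5$) to recolor the two relevant strands of $\beta_i$ and $\beta_j$ into blue edges via the chromatic morphism $\tilde d$, realize the slide as an ambient isotopy of the resulting blue graph (invariant by condition $1$), and then recolor back to red by the inverse chromatic modification. The crux is a local identity expressing that $\tilde d$ is compatible with the slide, i.e. that fusing the two red strands and then separating them across the handle reproduces, up to the coherence afforded by the comultiplication and the defining formula $\tilde d(x\otimes y)=\lambda(S(y_{(1)})gx)\,y_{(2)}\otimes y_{(3)}$, the configuration obtained by sliding first and recoloring afterwards. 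Establishing this coherence---effectively that the chromatic morphism behaves like a projector onto the image of the coproduct and is associative with respect to merging red curves---is the heart of the proof and the part I expect to require genuine computation.

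Finally I would treat stabilization. A stabilization glues on a genus-one summand $(T,\alpha_0,\beta_0)$ with $\alpha_0\cdot\beta_0=1$ and adds the red longitude $\beta_0$ to $\Gamma$ on a handle carrying no blue coloring. Because $\beta_0$ runs once over a trivial handle, I would first recolor a segment of $\beta_0$ blue by a chromatic modification (so that a suitable cutting disk meets a blue edge) and then cut along a meridian disk of that handle (condition $4$); the cutting identity collapses the extra handle and leaves the contribution of $\beta_0$ equal to the scalar $1$, recovering the original $(\mathfrak{C},\Gamma)$. Combining the four cases shows that $F'(\mathfrak{C},\Gamma)$ depends only on the diffeomorphism class of $M$, so the formula $\mathcal{K}_{\H-mod}(M)=F'(\mathfrak{C},\Gamma)$ defines a well-defined invariant.
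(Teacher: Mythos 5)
First, note that the paper does not prove this statement at all: it is imported verbatim as \cite[Th.~2.4]{CGPT}, so there is no internal proof to compare against. Your proposal is an attempt to reprove the cited result from the axioms of Theorem~\ref{Th:DefF'}, and as such it has two genuine gaps. The smaller one is the independence of the choice of $O_H$: you propose to cut along a disk isolating $O_H$ from the red curves and invoke multiplicativity, but condition~$4$ only permits cutting along a disk that \emph{intersects at least one blue edge}, and your separating disk meets none. Worse, the complementary factor $F'(\mathfrak{C}',\beta_{\mathrm{red}})$ would be a purely red graph, which is not an element of $\mathcal{H}_B$ (the blue subgraph must be non-empty), so there is nothing to multiply by $F'(O_H)$. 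The independence of $O_H$ has to come out of the full computation (after all red curves have been absorbed into the blue subgraph by chromatic modifications, the value reduces to a scalar times $F'(B^3,O_H)=1$), not from a premature factorization.

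The larger gap is the step you yourself flag as the crux, invariance under handle-slides of $\beta$ curves, where the proposed mechanism is wrong as stated: a handle-slide of $\beta_i$ over $\beta_j$ replaces $\beta_i$ by a band-sum with a parallel copy of $\beta_j$, which changes the isotopy class (indeed the homology class) of the curve in $\Sigma_{\genre}$, so it cannot be ``realized as an ambient isotopy of the resulting blue graph'' and then undone by recoloring. The actual content of the cited theorem is precisely a Kirby-move-type identity: after the chromatic modification the red curve carries the action of the integral/cointegral, and sliding another strand through it is absorbed because of the defining property $\lambda(x_{(1)})x_{(2)}=\lambda(x)1_H$ (equivalently, the centrality and coproduct behaviour of $\Lambda$ in the unimodular case). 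Your outline acknowledges that ``genuine computation'' is needed here but does not supply it, and since this is the entire substance of the theorem, the proposal does not constitute a proof; it is a plausible reduction of the statement to its hardest case, with that case left open and approached via an incorrect first move.
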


\Rq If we cut along the $\alpha$ curves of a handlebody $\mathfrak{C}_\alpha$ associated to a Heegaard diagram $(\Sigma_{\genre}, \alpha, \beta)$, we notice that the multi-handlebody obtained is always connected (and diffeomorphic to $B^3$) because $\Sigma_{\genre} \setminus \alpha$ is connected. \\

The link between $\mathcal{K}_{\H-mod}$ and the map $F''_H$ is established by the following theorem.

\begin{theorem} \label{Th:F''KHmod}
Let $M$ be a $3$-dimensional connected oriented closed manifold and $(\Sigma_{\genre}, \alpha, \beta)$ be a Heegaard diagram of $M$. 
Let $H$ be a spherical Hopf algebra and $\H-mod$ the pivotal category of finite-dimensional $H$-modules. 
Then, for all standardized virtual Heegaard diagrams of the form $(\Sigma_{\genre}, \alpha, \beta, \PtInf, \gamma)$, we have
 $$ \mathcal{K}_{\H-mod}(M) = F''_H(\Sigma_{\genre}, \alpha, \beta, \PtInf, \gamma). $$  
\end{theorem}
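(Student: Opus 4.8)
The plan is to evaluate the categorical invariant $F'(\mathfrak{C}_\alpha,\Gamma)$ directly and match it, ingredient by ingredient, with the output of the bead algorithm defining $F''_H$. Since $F'$ depends only on the diffeomorphism class of $(\mathfrak{C}_\alpha,\Gamma)$ (condition $1$ of Theorem~\ref{Th:DefF'}), and since $F''_H$ has already been shown to be invariant under isotopy, handle-slides of $\beta$ over $\alpha$, and $R_{II}$, $R_{III}$, $G_\gamma$ moves (Propositions~\ref{ExistF''_H}, \ref{Aplatissement} and~\ref{prop:ChangOrientation}), I may replace $(\Sigma_{\genre},\alpha,\beta)$ by an associated standardized flat diagram $(\Sigma_{\genre},\alpha,\beta',\PtInf,\gamma)$ and work inside the planar picture $\psi(\alpha,\beta')$ coming from a standard projection. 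In that picture the blue graph $O_H$ (with $F'(O_H)=1$) is localized near $\PtInf$ and the red curves $\beta'$ meet only the $\alpha$ curves.

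Next I would remove the obstruction to cutting $\mathfrak{C}_\alpha$ along the meridian disks $D_k$ bounded by the $\alpha_k$: as each $D_k$ meets the red subgraph exactly at $\alpha_k\cap\beta'$, I apply the chromatic modification of Figure~\ref{fig:AppChroma} to carry the red curves $\beta'$ into the blue subgraph, so that by condition~$5$ the graph $\Gamma$ becomes $\H-mod$-colored with every edge colored by $H$ and every former crossing carrying the chromatic morphism $\tilde d$. Each $D_k$ is now disjoint from the (empty) red subgraph and threads only blue edges, so condition~$4$ permits cutting along all the $D_k$ without changing $F'$; by the observation that $\Sigma_{\genre}\setminus\alpha$ is connected, the handlebody collapses to a single ball $B^3$. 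Each cut inserts a dual pair of coupons valued in $\operatorname{Hom}_{\H-mod}(\mathbb{1},H^{\otimes n_k})$ and $\operatorname{Hom}_{\H-mod}(H^{\otimes n_k},\mathbb{1})$, with $n_k=\#(\alpha_k\cap\beta')$; using that $\operatorname{Hom}_{\H-mod}(\mathbb{1},H)=\korps\,\Lambda$ consists of the left cointegrals, normalized by $\lambda(\Lambda)=1$, these coupons realize the cointegral decoration of $\alpha_k$, and the $H$-action distributing $\Lambda$ along the threading strands reproduces the beads $S^{d_j}(\Lambda_{(j)})$ of Figure~\ref{fig:PerlesKAmod}, the antipodes and exponents $d_j\in\{0,1\}$ being fixed by the orientation conventions of Proposition~\ref{prop:ChangOrientation}.

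Finally, the configuration in the ball is a $\Proj_H$-colored ribbon graph, so condition~$2$ gives $F'$ as a modified trace; the explicit formula $\trace_H(f)=\mu(f(1))$ turns the blue closure of each component $\beta'_i$ into the factor $\mu(\perleTot_i)$, while the pivotal (co)evaluations of $\H-mod$ at the extrema of $\beta'_i$ produce exactly the beads $g^{\pm 1}$ of Figure~\ref{fig:PlaceG}. The curves $\alpha_k$ disjoint from $\beta'$ survive the cut as closed blue loops of value $\epsilon(\Lambda)$, and by the multiplicativity of condition~$3$ they contribute the global factor $\epsilon(\Lambda)^v$. Assembling these contributions yields $\epsilon(\Lambda)^v\prod_{i=1}^{\genre}\mu(\perleTot_i)=F''_H(\Sigma_{\genre},\alpha,\beta,\PtInf,\gamma)$, hence $\mathcal{K}_{\H-mod}(M)=F''_H(\Sigma_{\genre},\alpha,\beta,\PtInf,\gamma)$.

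The hard part will be the bookkeeping of the cut coupons together with the chromatic morphisms: one must show that the full dual-basis insertion over $\operatorname{Hom}_{\H-mod}(\mathbb{1},H^{\otimes n_k})$, combined with $\tilde d(x\otimes y)=\lambda(S(y_{(1)})gx)y_{(2)}\otimes y_{(3)}$ and the pairing $\lambda(\Lambda)=1$, collapses to the single cointegral term $\Delta^{(n_k)}(\Lambda)$ distributed over the $n_k$ strands crossing $\alpha_k$, with precisely the antipodes $S^{d_j}$ and pivot beads $g$ dictated by the orientations. Reconciling the integral $\lambda$ that enters through $\tilde d$ and the modified trace with the cointegral $\Lambda$ that enters through the cut — so that no spurious scalar survives and, in particular, no signature-type factor appears (consistently with the absence of any $\delta^{-s}$ in $F''_H$) — is where the delicate verification lies.
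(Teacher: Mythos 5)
Your proposal follows essentially the same route as the paper's proof: chromatic modifications to absorb the red $\beta$ curves into the blue graph, cutting along the $\alpha$ disks, identifying the inserted dual-basis coupons with the action of the cointegral, and evaluating the chromatic coupons via $\lambda(g\varspadesuit)=\mu$; the two ``delicate verifications'' you defer are precisely the two lemmas the paper imports from \cite{CGPT}, namely Lemma \ref{lem:Coupure} and Lemma \ref{lem:AppChrom}. One small correction: condition $4$ of Theorem \ref{Th:DefF'} only allows cutting along a disk that meets at least one blue edge, so the curves $\alpha_k$ with $\alpha_k\cap\beta=\emptyset$ cannot be cut as you suggest; the paper removes them beforehand by an $R_{II}$ move so that $v=0$, the factor $\epsilon(\Lambda)^v$ being built into the definition of $F''_H$ rather than produced by the cutting.
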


\begin{proof}
Let $M$ be a $3$-dimensional connected oriented closed manifold. 
Let us consider a Heegaard splitting $M= \mathfrak{C}_\alpha \cup \mathfrak{C}_\beta$ and an associated Heegaard diagram of $M$, $(\Sigma_{\genre}, \alpha, \beta)$. 
We will assume (potentially after performing an $R_{II}$ move between the $\alpha$ and $\beta$ curves) that, for every component $\alpha_i \in \alpha$, $\alpha_i \cap \beta \not = \emptyset$. 
We fix a point $\PtInf$ in $\Sigma_{\genre} \setminus (\beta \cup \alpha)$ and a family of curves $\gamma \subset \Sigma_{\genre} \setminus \{ \PtInf \}$ such that $(\Sigma_{\genre}, \alpha, \gamma, \PtInf, \gamma)$ is a standardized virtual Heegaard diagram. 
We once again consider a ribbon graph $O_H \subset \Sigma \setminus (\alpha \cup \beta)$ which is the braid closure in a disk of a coupon colored by $h \in \operatorname{End}_{\H-mod}(H)$ such that $F'(O_H) = 1$. 
We choose $O_H$ on $\Sigma$ such that there exists an open disk in $\Sigma_{\genre}$ containing $O_H$, but not containing $\PtInf$, and such that $\gamma \cap O_H = \emptyset$.
Unless stated otherwise, the following operations are skein relations between diagrams, and will be performed in the complement of an open disk containing the coupon colored by $h$. 
We will call this open disk $V_h$. 
Notably, the considered skein relations will always preserve the images under $F'$.

Let us consider, on the pointed surface $\Sigma_{\genre} \setminus \PtInf$, the bichrome diagram formed by the set of $\beta$ curves, considered as red curves, and the graph $O_H$ previously mentioned, considered as the blue subgraph.
Through chromatic modifications with $\tilde{d}$, we turn blue all $\beta$ curves by connecting them with $O_H$ (see Figure \ref{fig:PassBleu}). 
We denote by $(\mathfrak{C}_\alpha, \Gamma)$ the bichrome diagram thus obtained.  
It is always possible (up to isotopy of the surface) to apply successive iterations of the chromatic modification, as shown in Figure \ref{fig:PassBleu}, since the complement of $\Sigma_{\genre} \setminus \beta$ is connected. 

\begin{figure}[ht]
\centering
\includegraphics[scale=0.7]{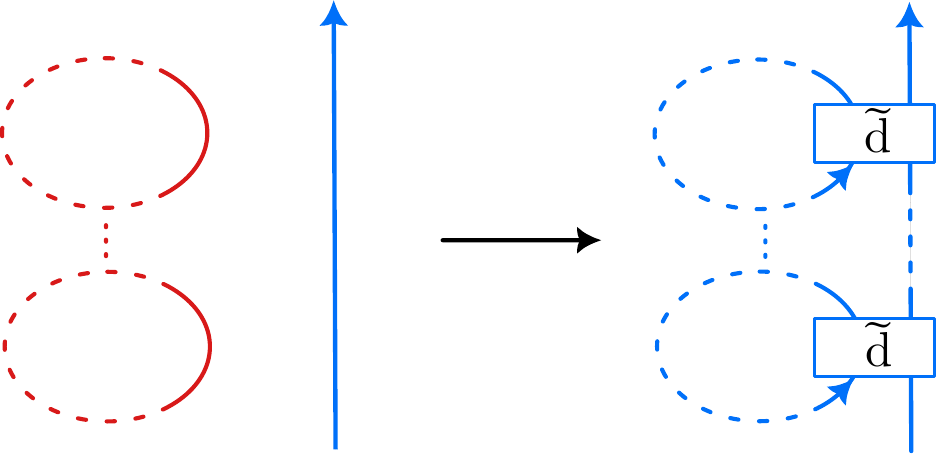}
\caption{Transformation of a bichrome graph into an entirely blue graph with chromatic modifications}\label{fig:PassBleu}
\end{figure}

\vspace{5pt}

We denote $\psi$ a standard projection of $((\Sigma_{\genre}, \alpha, \gamma),\PtInf)$.
We notice that $\Gamma$ and $\beta$ will have the same image by $\psi$ except on an open disk containing the graph $O_H$ and coupons colored by the chromatic morphism.

We use point $4$ of Theorem \ref{Th:DefF'} to cut $(\mathfrak{C}_\alpha, \Gamma)$ along disks $D_i$ with curves $\alpha_i$ as boundaries.
This gives us a new diagram on $B^3$ that we denote $cut(\Gamma)$. 
By definition, $ \mathcal{K}_{\H-mod} (M) = F'(B^3, cut(\Gamma))$.

For all $i \in \{1, ..., {\genre}\}$, cutting along a disk $D_i$ creates two disks in the boundary of $cut_{D_i}(\mathfrak{C}_\alpha)$. 
These disks boundaries, that we will call $\alpha_i^1$ and $\alpha_i^2$, are connected by $\gamma_i$.
Hence, the boundary of a tubular neighborhood $T_i$ of $\alpha_i \cup \gamma_i$ (in $\partial(\mathfrak{C}_\alpha)$) coincides with the boundary of a disk $E_i$ in $\partial(cut_{D_i}(\mathfrak{C}_\alpha))$.
From now on, we consider a projection of $cut(\Gamma)$ onto the plane, identical to the projection $\psi$ previously considered, except on the neighborhoods $T_i$ and the inside of the disks $E_i$ previously mentioned. 
The difference between the images of $T_i$ and $E_i$ by these two projections is represented in Figure \ref{fig:CoupurePlane} (image of $T_i$ on the left and of $E_i$ on the right).

\begin{figure}[ht]
 \centering
 \includegraphics[scale=0.59]{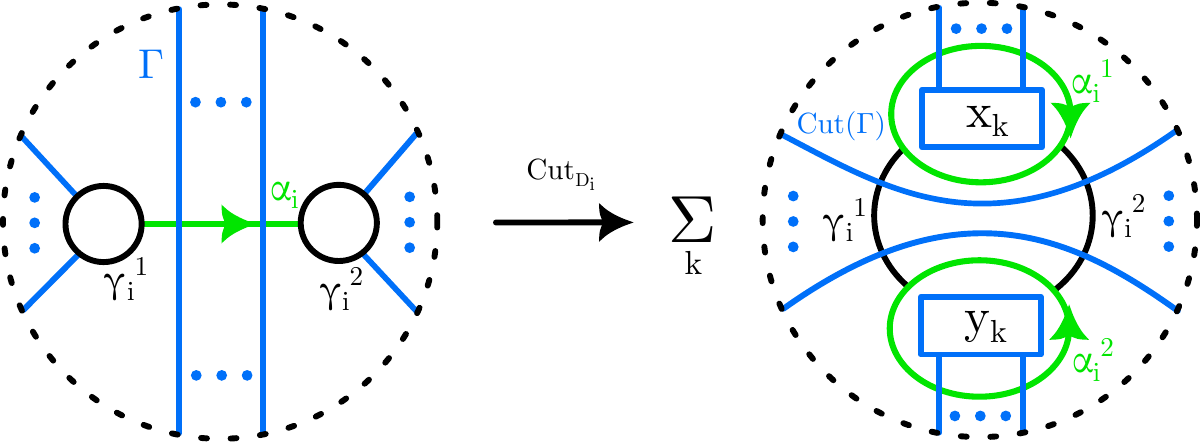}
  \caption{Projections onto the plane before and after cutting along a disk of boundary $\alpha_i$}\label{fig:CoupurePlane}
\end{figure}

For the rest of the computation of $\mathcal{K}_{\H-mod}$, we use graphical calculus in $\Vect-{\korps}$ : all edges are colored by the $\korps$-space $H$, and, for all $h \in H$, the morphism $L_h : x \in H \mapsto hx$ is represented by a bead $h$. 
(i.e. both diagrams represented in Figure \ref{fig:PerleToCoupon} are equivalent.)
\begin{figure}[ht]
\centering
\includegraphics[scale=0.5]{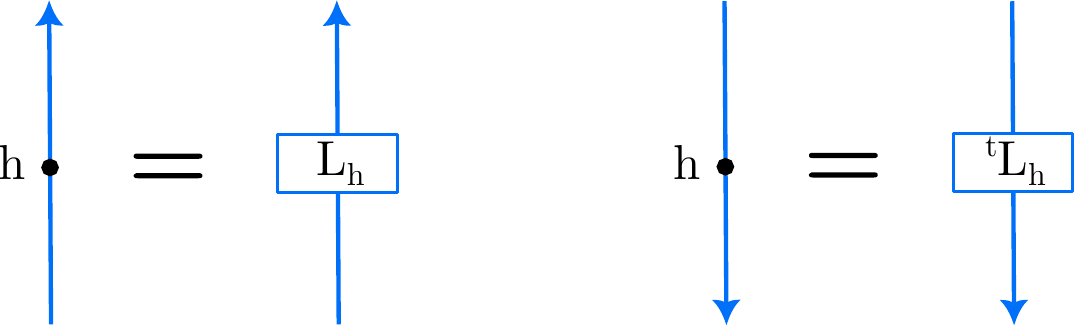}
\caption{Relation between notation with bead and notation with coupons}\label{fig:PerleToCoupon}
\end{figure}
The evaluation and coevaluation morphisms in $\H-mod$ and in $\Vect_{\korps}$ differ by an action of $g^{d}$ with $d \in \{-1, 0, 1\}$.
Hence, we represent on the graph computed on $\Vect$ the beads $g^{\pm 1}$ corresponding to the maxima and minima of the graph (as shown in Figure \ref{fig:PlaceG}).

In the open disk $E_i$, the two coupons of $cut(\Gamma)$ ($x_k$ and $y_k$ in Figure \ref{fig:CoupurePlane}) are separated by the curves that were intersecting $\gamma_i$ before the cutting.
In $\Vect_{\korps}$, Equality $(1)$ shown in Figure \ref{fig:CroisVect} is verified.
This equality is obtained thanks to the braiding $\tau$ of $\Vect$ (defined by $\tau : x \otimes y \in P \otimes P \mapsto y \otimes x$ with $P \in \Proj_H$) represented by crossings in the figure.
This enables us to express the morphism $\sum_k x_k \circ y_k = \Lambda_P$. 

\begin{figure}[ht]
\centering
\includegraphics[scale=0.43]{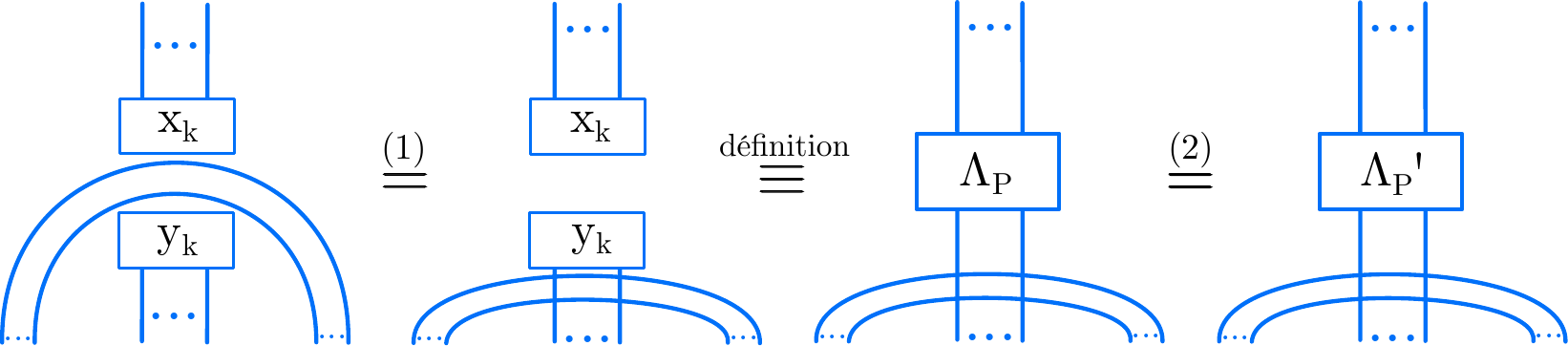}
\caption{Relations between images by $F'$ of diagrams in $\Vect_{\korps}$}\label{fig:CroisVect}
\end{figure}

\begin{lemme}\cite[Lemme 6.2]{CGPT}\label{lem:Coupure}
Let $\Lambda'_P : P \to P$ be the map determined by the action of the cointegral $\Lambda$ over $P \in \Proj_H$ (it is a $H$-module morphism due to $H$ being unimodular, hence $\Lambda$ is central). 
Let $\Lambda_P : P \to P$ be the morphism defined by $\Lambda_P = \sum_k x_k y_k$ with $\{x_k\}$ and $\{y_k\}$ the aforementioned dual bases.
Then $\Lambda_P = \Lambda'_P$ for all projective module $P \in \Proj_H$. 
\end{lemme}

With Lemma \ref{lem:Coupure} we can now "close" the curves where there are coupons $x_k \in \operatorname{Hom}_{\H-mod}(\mathbb{1}, P)$ and $y_k \in \operatorname{Hom}_{\H-mod}(P , \mathbb{1})$ following each other (see Equality $(2)$ in Figure \ref{fig:CroisVect}). 
We will again denote $\Gamma$ the diagram obtained after the "closure" of the curves of $cut(\Gamma)$. 
We now see the cointegral's action $\Lambda \in H$ over the strands appear where the coupon was. 
In other words, on the set of these strands, we now place beads $\Lambda_{(j)}$ or $S(\Lambda_{(j)})$, as indicated in Figure \ref{fig:PerlesKAmod}. 

Since the cutting has been performed along $\alpha$ curves, the resulting graph $\Gamma$ has the same image under the projection onto the plane (therefore the same beads) as a flattening of $\beta$, except inside an open disk $V$ containing the graph $O_H$ and all coupons representing the chromatic morphism. 
From now on, when we will talk about "collected beads" on $\beta$, we will consider the beads obtained by this flattening of $\beta$. 
To end the proof, we will observe the parts of the diagrams contained inside the open disk $V$.

\begin{lemme}\cite[Lemme 6.6]{CGPT}\label{lem:AppChrom}
For all $x \in H$ the following equality is verified in $\Vect_{\korps}$
$$( \overleftarrow{ev}_H \otimes \Id_H)(\Id_{H^*} \otimes L_x \otimes \Id_H)(\Id_{H^*} \otimes \tilde{d})(\overrightarrow{coev}_H \otimes \Id_H) = \lambda(gx) \Id_H,$$
with $\overleftarrow{ev}_H : H^* \otimes H \to \korps$ and $\overrightarrow{coev}_H : \korps \to H^* \otimes H $ the evaluation and coevaluation maps in $\Vect_{\korps}$. 
\end{lemme}

We notice here that the evaluation $\overleftarrow{ev}_H$ and the coevaluation $\overrightarrow{coev}_H$ are considered in $\Vect_{\korps}$ and not with the spherical structure using the pivot $g$. 

\begin{proof}
Let $\{e_i\}_i$ be a basis of $H$ and $\{e_i^*\}_i$ be a dual basis of $H^*$. 
By developing the formulas for the evaluation and coevaluation in $\Vect_{\korps}$ and the formula given for the chromatic morphism, for all $h \in H$ we have 
\begin{align*}
& ( \overleftarrow{ev}_H \otimes \Id_H)(\Id_{H^*} \otimes L_x \otimes \Id_H)(\Id_{H^*} \otimes \tilde{d})(\overrightarrow{coev}_H \otimes \Id_H)(h)  \\
& = \sum_i ( \overleftarrow{ev}_H \otimes \Id_H)(\Id_{H^*} \otimes L_x \otimes \Id_H)(\Id_{H^*} \otimes \tilde{d})(e_i^* \otimes e_i \otimes h) \\
& = \sum_i( \overleftarrow{ev}_H \otimes \Id_H)(\Id_{H^*} \otimes L_x \otimes \Id_H)(e_i^* \otimes \lambda(S( h_{(1)}) g e_i) h_{(2)} \otimes h_{(3)} ) \\
& = \sum_i( \overleftarrow{ev}_H \otimes \Id_H)(e_i^* \otimes \lambda(S( h_{(1)}) g e_i) x h_{(2)} \otimes h_{(3)}) \\
& =  \sum_i \lambda(S( h_{(1)}) g e_i) e_i^*(x h_{(2)}) \otimes h_{(3)})\\
& =  \lambda(S( h_{(1)}) g x h_{(2)}) h_{(3)} \\
& = \lambda(S^2(h_{(2)}) S( h_{(1)}) g x) h_{(3)} \\
& =  \lambda(g x ) \epsilon(h_{(1)}) h_{(2)} \\
& =  \lambda(g x) h.
\end{align*}
 
The third-to-last equality comes from the sphericity of $H$. 
\end{proof}

Since $\lambda (gx) = \mu(x)$ by definition of $\mu$, we can graphically represent in $\Vect_{\korps}$ the lemma by

\begin{center}
\includegraphics[scale=0.5]{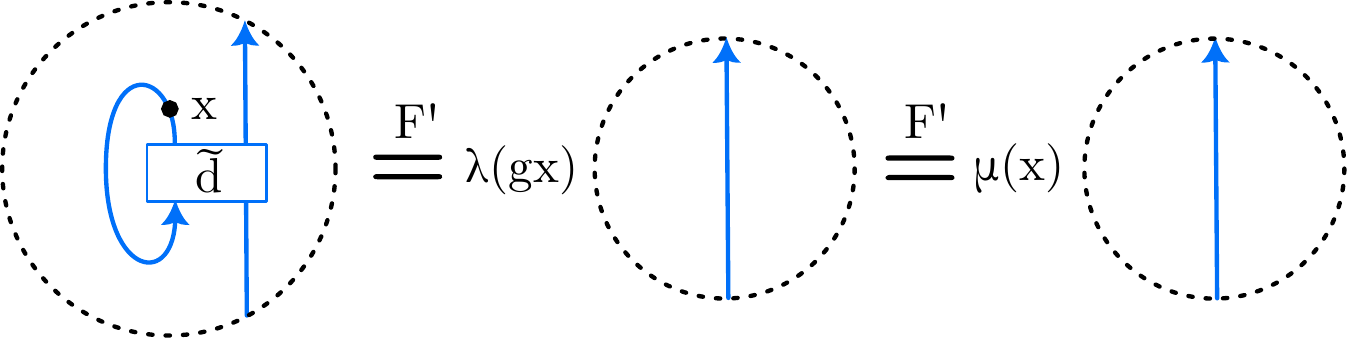} 
\end{center}

The equalities in the figure represent skein relations between images under $F'$ of diagrams differing only on an open set as indicated.

Hence, for each coupon colored by the chromatic morphism on $\Gamma$, we obtain a scalar $\mu(\perleTot_i)$ with, for all $i \in \{1, ..., {\genre}\}$, the product of the component's beads denoted $\perleTot_i \in H$. 
This product $\perleTot_i \in H$ is identical to the total bead $\perleTot_i$ obtained on the corresponding component of $\beta$ during the computation of $F''$ for a base point placed inside $V$.
Indeed, we have seen previously that, outside the open set $V$, $\Gamma$ has the same components and beads as those found on $\beta$ during the computation $F''$. 

After applying Lemma \ref{lem:AppChrom} on each coupon colored by the chromatic morphism, we note that the image of graph $\Gamma$ under $F'$ is reduced to the image of graph $O_H$ under $F'$ multiplied by scalars $\mu(\perleTot_i)$. 
By definition of $O_H$ and of $F'$, we find that $F'(\mathfrak{C}_\alpha, \Gamma) = \prod_{i \in \{1, ..., {\genre}\}} \mu(\perleTot_i) F'(B^3,O_H) = \prod_{i \in \{1, ..., {\genre}\}} \mu(\perleTot_i) = F''_H(\Sigma_{\genre}, \alpha, \beta, \PtInf, \gamma) $. 
Which gives us the desired result. 

\end{proof}

Since $\mathcal{K}_{\H-mod}$ does not depend on the placement of point $\PtInf$, nor on the $\gamma$ curves, Theorem \ref{Th:F''KHmod} gives the following corollary. 

\begin{coro}
Let $M$ be a $3$-dimensional connected oriented closed manifold and $(\Sigma_{\genre}, \alpha, \beta)$ be a Heegaard diagram of $M$.
Let $\PtInf$ be a point and $\gamma$ be curves such that $(\Sigma_{\genre}, \alpha, \beta, \PtInf, \gamma)$ is a standardized virtual Heegaard diagram.
Let $H$ be a spherical Hopf algebra and $\H-mod$ the pivotal category of finite-dimensional $H$-modules.  
The scalar $F''(\Sigma_{\genre}, \alpha, \beta, \PtInf, \gamma)$ does not depend on the choices of the point $\PtInf$ or the curves $\gamma$. 
\end{coro}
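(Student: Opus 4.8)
The plan is to read the corollary off directly from Theorem~\ref{Th:F''KHmod}, treating it as a purely logical consequence of the equality established there. Fix the Heegaard diagram $(\Sigma_{\genre}, \alpha, \beta)$ of $M$, and let $(\PtInf, \gamma)$ and $(\PtInf', \gamma')$ be any two \emph{admissible} choices, meaning that both $(\Sigma_{\genre}, \alpha, \beta, \PtInf, \gamma)$ and $(\Sigma_{\genre}, \alpha, \beta, \PtInf', \gamma')$ are standardized virtual Heegaard diagrams. Applying Theorem~\ref{Th:F''KHmod} to each of them yields
$$ F''_H(\Sigma_{\genre}, \alpha, \beta, \PtInf, \gamma) = \mathcal{K}_{\H-mod}(M) = F''_H(\Sigma_{\genre}, \alpha, \beta, \PtInf', \gamma'), $$
so the two values of $F''_H$ coincide, which is exactly the asserted independence.

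The only point to justify is that the scalar in the middle of this chain genuinely does not see $\PtInf$ or $\gamma$. This is immediate from the construction of $\mathcal{K}_{\H-mod}$: by \cite[Th. 2.4]{CGPT} the invariant $\mathcal{K}_{\H-mod}(M) = F'(\mathfrak{C}, \Gamma)$ depends only on the diffeomorphism class of $M$, and neither the handlebody $\mathfrak{C}$ nor the bichrome graph $\Gamma$ entering its definition involves the auxiliary data $(\PtInf, \gamma)$, which serve only to standardize and flatten the $\beta$ curves when computing the right-hand side. Hence $\mathcal{K}_{\H-mod}(M)$ is a fixed element of $\korps$, and Theorem~\ref{Th:F''KHmod} then forces $F''_H(\Sigma_{\genre}, \alpha, \beta, \PtInf, \gamma)$ to equal that single element for every admissible pair.

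I expect essentially no obstacle here, since all of the analytic content has already been discharged upstream: invariance of $F''_H$ under change of standard projection and under $\gamma$-flattening is Proposition~\ref{ExistF''_H} together with the flattening proposition, invariance under orientation reversal is Proposition~\ref{prop:ChangOrientation}, and the identification with $F'$ is the proof of Theorem~\ref{Th:F''KHmod}. The corollary merely records that independence of $F''_H$ from $(\PtInf, \gamma)$ is \emph{inherited} from its computing an invariant of $M$. The one subtlety worth flagging is that the statement tacitly ranges only over admissible pairs $(\PtInf, \gamma)$ — precisely those yielding a standardized virtual Heegaard diagram — which is exactly the range over which Theorem~\ref{Th:F''KHmod} supplies the equality, so no admissible choice is left uncovered.
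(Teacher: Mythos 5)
Your argument is exactly the paper's: the corollary is stated as an immediate consequence of Theorem~\ref{Th:F''KHmod} together with the fact that $\mathcal{K}_{\H-mod}(M)$ depends only on $M$ and not on the auxiliary data $(\PtInf, \gamma)$. The proposal is correct and matches the paper's reasoning.
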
 

Consequently, for the rest of the article, we will allow ourselves to omit arguments $\PtInf$ or $\gamma$ in the map $F''$ when $(\Sigma_{\genre}, \alpha, \beta)$ is a non-virtual Heegaard diagram or a flattening of a non-virtual Heegaard diagram.

\section{Relation between invariants}

In this section, we fix $H$ a spherical Hopf algebra on a field $\korps$.

This section rests on a correspondence between specific Heegaard and surgery link diagrams. 
This approach is used and detailed in \cite[Ch. 4]{Chang_Cui} and \cite[Ch. 3.1]{Chang_Wang}.

\begin{theorem} \cite[Th. 4.2]{Chang_Cui}\cite[Th. 4.1 \& Th.4.2]{BirPow}\label{ExDiagStand}
Any $3$-dimensional oriented closed manifold $M$ admits a Heegaard diagram $D=(\Sigma_{\genre}, \alpha, \beta)$, with some genus ${\genre}$, verifying the following properties: 
\begin{itemize}
\item There exists a collection of ${\genre}$ curves $\gamma = \{\gamma_1, ... , \gamma_{\genre}\}$ on $\Sigma_{\genre}$ such that $D_1 =  (\Sigma_{\genre}, \gamma, \beta)$ and $D_2 =(\Sigma_{\genre}, \alpha, \gamma)$ are diagrams of $S^3$ verifying $\MatInter{\gamma}{\beta} = \Id_{\genre}$ and $\MatInter{\alpha}{\gamma} = \Id_{\genre}$. 
\item If we see $\beta$ as a framed link in $\mathbb{S}^3$, where $\mathbb{S}^3$ is determined by $D_1$, 
and where framing is chosen as a parallel copy of $\beta$ in the Heegaard surface, then $\beta$ is a surgery link of $M$. 
\end{itemize}

\end{theorem}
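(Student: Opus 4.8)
The plan is to build the Heegaard diagram by starting from the surgery description of $M$ and running the surgery-to-Heegaard translation in reverse, so that the link $\beta$ is literally read off from a surgery presentation. First I would invoke the fact that every closed oriented $3$-manifold is obtained by integral Dehn surgery on a framed link $L \subset \mathbb{S}^3$ (Lickorish--Wallace); isotope $L$ into a thin tubular neighborhood of a standardly embedded genus-$\genre$ Heegaard surface $\Sigma_\genre \subset \mathbb{S}^3$, where $\genre$ is the number of components of $L$, so that each component $\beta_i$ of $L$ lies on $\Sigma_\genre$ and the blackboard/surface framing agrees with the prescribed surgery framing. Taking $\gamma = \{\gamma_1,\dots,\gamma_\genre\}$ to be the meridian system of the genus-$\genre$ handlebody of $\mathbb{S}^3$ bounded by $\Sigma_\genre$, we arrange (after isotopy) that $\beta_i$ meets $\gamma_j$ in the single geometric point dictated by the standard picture, i.e. $\MatInter{\gamma}{\beta} = \Id_\genre$, and $(\Sigma_\genre,\gamma,\beta) = D_1$ is a diagram of $\mathbb{S}^3$.

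The key structural step is to produce the $\alpha$ curves. Because $D_1$ is a diagram of $\mathbb{S}^3$ with $\MatInter{\gamma}{\beta}=\Id_\genre$, I would choose $\alpha = \{\alpha_1,\dots,\alpha_\genre\}$ to be a dual system: a family of $\genre$ simple closed disjoint curves on $\Sigma_\genre$ with $\MatInter{\alpha}{\gamma}=\Id_\genre$ and such that $(\Sigma_\genre,\alpha,\gamma)=D_2$ again presents $\mathbb{S}^3$. Concretely, one takes $\alpha_i$ to be the longitude dual to $\gamma_i$ pushed off along the surface and adjusted by Dehn twists so that cutting along $\alpha$ realizes the second handlebody of $\mathbb{S}^3$; the condition $\MatInter{\alpha}{\gamma}=\Id_\genre$ together with connectedness of $\Sigma_\genre \setminus \alpha$ guarantees $D_2$ is a Heegaard diagram of $\mathbb{S}^3$. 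One then checks that the handlebody $\mathfrak{C}_\alpha$ determined by $\alpha$, reglued to the handlebody $\mathfrak{C}_\beta$ determined by $\beta$, reproduces exactly the result of the original surgery on $L$: gluing along $\Sigma_\genre$ the $\alpha$-handlebody (which fills $\mathbb{S}^3$ relative to $\gamma$) to the $\beta$-handlebody performs precisely the Dehn filling with the surface framing, so $(\Sigma_\genre,\alpha,\beta)$ is a Heegaard diagram of $M$ and $\beta$, with its surface framing, is a surgery link for $M$.

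The main obstacle I anticipate is the simultaneous normalization of framings: one must guarantee that the surface framing of $\beta_i$ inside $\Sigma_\genre$ coincides with the integral surgery coefficient prescribed by Lickorish--Wallace, \emph{while} keeping both $\MatInter{\gamma}{\beta}=\Id_\genre$ and $\MatInter{\alpha}{\gamma}=\Id_\genre$. Adjusting a framing corresponds to a Dehn twist of $\beta_i$ about a parallel curve, which in general disturbs the geometric intersection numbers with $\gamma$; reconciling these is exactly where the Birman--Powell stabilization/handle-slide analysis enters, allowing one to absorb the framing discrepancies by stabilizing $\Sigma_\genre$ (increasing $\genre$) and sliding, at the cost of enlarging the genus but preserving the two $\mathbb{S}^3$-diagram conditions. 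I would therefore structure the proof so that, after the initial placement, any mismatch in framing or in the intersection matrices is resolved by a bounded sequence of stabilizations and handle-slides, invoking \cite[Th. 4.1 \& Th. 4.2]{BirPow} to certify that the final diagram simultaneously satisfies all three required properties.
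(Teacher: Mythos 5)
Your overall framework --- translating a surgery presentation into a Heegaard diagram by placing the surgery link near a standardly embedded surface, taking $\gamma$ to be the meridian system and $\alpha$ a dual system --- is the same one the paper relies on. (Note the paper does not prove this theorem; it cites \cite{Chang_Cui} and \cite{BirPow} and only sketches the idea: present the surgery link as the flat closure of a pure braid and ``untangle'' it so that the only strands leaving the plane $\mathbb{R}^2 \times \{0\}$ are unknotted arcs with trivial framing.) However, your first step contains a genuine gap that cannot be repaired by the framing adjustments you describe in your last paragraph. You assert that the Lickorish--Wallace link $L$ can be isotoped onto a genus-$\genre$ surface, with $\genre$ equal to the number of components of $L$, so that $\MatInter{\gamma}{\beta} = \Id_{\genre}$. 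But the condition $\MatInter{\gamma}{\beta} = \Id_{\genre}$ forces each $\beta_i$ to run over exactly one handle exactly once, with the rest of $\beta_i$ embedded in the planar part of the surface; such a curve has bridge number one and is therefore an unknot in $\mathbb{S}^3$. Since a surgery presentation of a general $M$ may require knotted components (e.g.\ $+1$-surgery on the trefoil for the Poincar\'e sphere), no isotopy of $L$ achieves this position, and the genus cannot in general be taken equal to the number of components of the original $L$.

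The actual content of the cited theorem is precisely the replacement of an arbitrary surgery presentation by a new one --- generally with more components, obtained by Kirby-calculus-type modifications rather than isotopy --- in which every component is an unknot lying flat except for a single untwisted bridge over its own handle, all components being disjoint on the surface. The genus $\genre$ is the number of components of this \emph{modified} link. Your proposal defers only the ``framing normalization'' to \cite{BirPow}, but the unknotting/positioning normal form is the harder and logically prior step, and your argument assumes it. As written, the proof is circular on exactly the point the theorem is asserting; to fix it you would need to either carry out the Birman--Powell untangling explicitly (starting from the link as a closure of a braid and trading crossings and framings for new handles) or state clearly that the entire normal-form existence, not just the framing bookkeeping, is imported from \cite{BirPow}. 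The remainder of your argument (that $\MatInter{\gamma}{\beta} = \Id_{\genre}$ makes $D_1$ a stabilized diagram of $\mathbb{S}^3$, and that regluing $\mathfrak{C}_\alpha$ to $\mathfrak{C}_\beta$ realizes the Dehn filling with surface framing) is sound once that normal form is granted.
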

The idea behind the proof in \cite[Theorem 4.1 and Theorem 4.2]{BirPow} is to use a surgery presentation of $M$ by a link $L \subset \mathbb{R}^2 \times \mathbb{R}^{-} \subset \mathbb{S}^3$ that we will consider as the flat closure of a pure braid. 
We can then "untangle" that link in such a way that the only strands of $L$ not resting in the plane $\mathbb{R}^2 \times \{0\}$ are unknotted arcs with trivial framing. 

\vspace{5pt}
We will now describe a way to see the $\beta$ curves of a Heegaard diagram verifying the hypotheses of Theorem \ref{ExDiagStand} as a link in $\mathbb{R}^3$. 
This perspective is based on the reasoning inside the proof mentioned above, which is why the link obtained will be the one mentioned in the theorem. 

Let $(\Sigma_{\genre}, \alpha, \beta)$ be a Heegaard diagram of a ($3$-dimensional oriented closed) manifold $M$ and $\gamma \subset \Sigma_{\genre}$ be a collection of curves such as described in Theorem \ref{ExDiagStand}. 
Therefore, there exists a point $\PtInf \in \Sigma_{\genre} \setminus (\alpha \cup \beta)$ such that $(\Sigma_{\genre}, \alpha, \beta, \PtInf, \gamma)$ is a standardized virtual Heegaard diagram, with changes of the orientation of certain curves of $\gamma$ if necessary. 
Let us consider $\psi$ a standard projection of $(\Sigma_{\genre}, \alpha, \beta, \PtInf, \gamma)$. 
We will see a way to visualize the image $\psi(\beta) \subset \mathbb{R}^2$ as a link in $\mathbb{R}^3$ (the following explanations are illustrated in Figure \ref{fig:Anse3D}). 
Let $r_\theta$ be the rotation with radius $\theta$ around axis $\{\frac{3}{2}\} \times \mathbb{R} \times \{0\}$ (counterclockwise). 
For all $i \in \{1, ..., {\genre}\}$, the image $r_{[0, \pi]}(D_i^1) = \{r_{\theta}(D_i^1) | \theta \in [0, \pi]\}$ describes an unknotted handle with trivial framing and attaching region $\{D_i^1 \cup D_i^2 \}$, attached to $\mathbb{R}^2 \times \mathbb{R}^{+}$.
The image of $\psi(\beta) \cap \gamma_i^1$ by the set of rotations $r_{[0, \pi]}$ is a strand in the boundary of the handle thus defined, which connects $\psi(\beta) \cap \gamma_i^1$ and $\psi(\beta) \cap \gamma_i^2$. 
The strands thus defined correspond to the unknotted arcs with trivial framing mentioned in the idea behind the proof of Theorem \ref{ExDiagStand}.
Therefore, we obtain a link $ \psi(\beta) \bigcup_{1 \leq i \leq {\genre}} r_{[0, \pi]}(\psi(\beta) \cap \gamma_i^1)$ in $\mathbb{R}^3 (\subset \mathbb{S}^3)$ which is, in particular, a surgery link of the manifold $M$ (see proof of \cite[Theorem 4.1 and Theorem 4.2]{BirPow}). 

\begin{figure}[ht]
\centering
\includegraphics[scale=1.3]{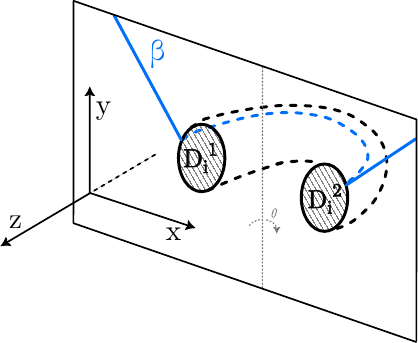}
\caption{Visualization of the image of a standard projection in $\mathbb{R}^3$}\label{fig:Anse3D}
\end{figure}

\Rq In the rest of this article, we will represent the image of a Heegaard diagram $D \subset \mathbb{R}^2 \subset \mathbb{R}^3$ by a standard projection on $\mathbb{R}^2$, and an associated surgery link $L \subset \mathbb{R}^3 (\subset \mathbb{S}^3)$ as shown in Figure \ref{fig:CompHeegEntrel}.
Outside the open disks represented in the figure, it follows from the above that the link $L$ can be projected onto the plane (with the same standard projection), and we can note that we obtain the same curves for the projection of the Heegaard diagram and for the associated surgery link seen in the plane. 
\vspace{5pt}

\begin{figure}[ht]
\centering
\includegraphics[scale=0.65]{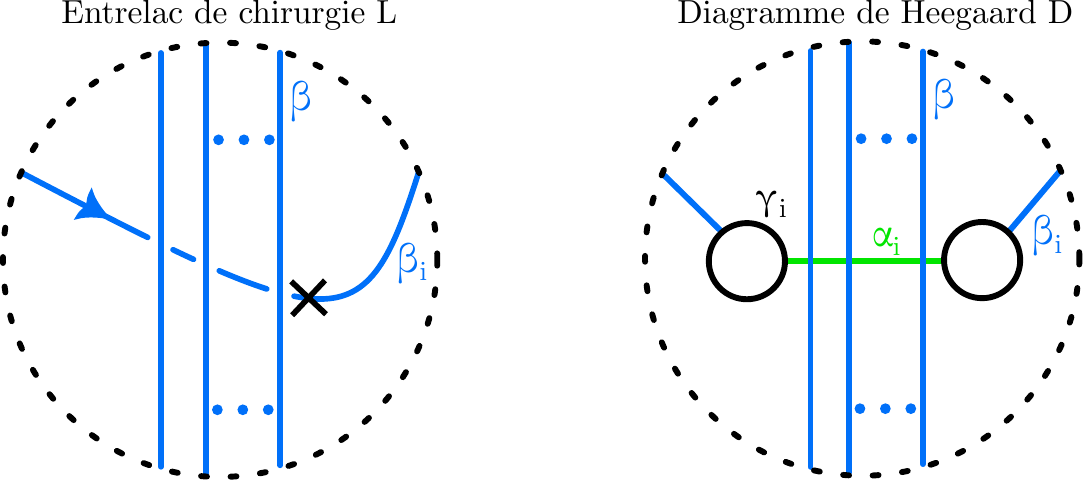}
\caption{Heegaard diagram and surgery link of the same manifold $M$}\label{fig:CompHeegEntrel}
\end{figure}

This similarity between diagrams allows us to see the following relation between the spherical chromatic invariant $\mathcal{K}_{\H-mod}$ and the renormalized Hennings-Kauffman-Radford invariant $\HKR_{D(H)}$ of $M$.

\begin{theorem}[Main theorem]\label{Th:principal}
Let $H$ be a spherical Hopf algebra and $M$ be a $3$-dimensional connected oriented closed manifold. 
We have
$$  \mathcal{K}_{\H-mod}(M) = \HKR_{D(H)}(M) $$
with $D(H)$ the Drinfeld double of $H$. 
\end{theorem}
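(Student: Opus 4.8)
The plan is to compute both invariants from a single Heegaard diagram $(\Sigma_{\genre},\alpha,\beta)$ of $M$ satisfying Theorem \ref{ExDiagStand}, together with its auxiliary curves $\gamma$, and to match the two computations crossing by crossing. On the left, Theorem \ref{Th:F''KHmod} gives $\mathcal{K}_{\H-mod}(M)=F''_H(\Sigma_{\genre},\alpha,\beta,\PtInf,\gamma)$, which (after an $R_{II}$ move ensuring every $\alpha_i$ meets $\beta$, so the exponent $v$ vanishes) equals $\prod_{i=1}^{\genre}\mu(\perleTot_i)$, where $\perleTot_i$ is the total bead of $\beta_i$. On the right, the curves $\beta$, pushed over the $\genre$ handles as described before Figure \ref{fig:Anse3D}, form a surgery link $L$ of $M$; the algorithm of Section \ref{Part:CalcHenn}, applied with the double data $R^D=\sum_k(\epsilon\otimes e_k)\otimes(e_k^*\otimes 1_H)$, pivot $g^D=\epsilon\otimes g$ and symmetrized integral $\mu^D=\Lambda\otimes\mu$, yields $\HKR_{D(H)}(M)=\delta^{-s}\,(\mu^D)^{\otimes \genre}(PT(L))$. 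The whole argument then reduces to evaluating $(\mu^D)^{\otimes\genre}(PT(L))$ and identifying it with $\prod_i\mu(\perleTot_i)$.

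First I would fix the geometric dictionary. Since $\MatInter{\alpha}{\gamma}=\MatInter{\gamma}{\beta}=\Id_{\genre}$, the $i$-th handle is traversed exactly once by $\beta_i$, so each component of $L$ is a single curve $\beta_i$ and $n=\genre$. Outside the $\genre$ handle disks the projection of $L$ coincides with the standard projection of the flattened Heegaard diagram (Figure \ref{fig:CompHeegEntrel}); inside, the handle arc of $\beta_i$ runs parallel to $\alpha_i$ and passes over precisely the planar strands crossing $\alpha_i$. Thus the crossings ``handle arc $\beta_i$ over planar $\beta_j$'' are in canonical bijection, and in the same order along $\alpha_i$, with the intersections $\alpha_i\cap\beta_j$ of the flattened diagram; the remaining crossings of $L$ are planar $\beta$--$\beta$ crossings, which carry no bead in $F''_H$.

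The core of the proof is the local evaluation at the handle crossings. At such a crossing $R^D$ contributes, up to the orientation-dependent application of $S$, a dual vector $e_k^*$ to the $H^*$-slot of the handle arc of $\beta_i$ and the matching basis vector $e_k$ to the $H$-slot of the planar strand $\beta_j$, the index $k$ being summed. Applying $(\mu^D)^{\otimes\genre}=(\Lambda\otimes\mu)^{\otimes\genre}$, I would use on each component $\beta_i$ that the product in $H^{*\mathrm{cop}}$ is dual to the coproduct of $H$: pairing with $\Lambda$ the word of dual vectors collected along the handle arc of $\beta_i$ (in order along $\alpha_i$) amounts to evaluating $\Delta^{(n_i)}(\Lambda)=\Lambda_{(1)}\otimes\cdots\otimes\Lambda_{(n_i)}$ against them. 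The contraction $\sum_k\langle e_k^*,\Lambda_{(p)}\rangle\,e_k=\Lambda_{(p)}$ then replaces the basis vector on $\beta_j$ by $\Lambda_{(p)}$, exactly the bead that $F''_H$ assigns to $\alpha_i\cap\beta_j$; the pivot beads come from $g^D=\epsilon\otimes g$ and match the $g$-beads on caps and cups, while negative crossings and downward strands produce the antipode powers $S^{d}$ through $S^2=g(\varspadesuit)g^{-1}$ and the unimodular identity $S(\Lambda)=\Lambda$. After these substitutions the $H$-part of $\beta_j$ is precisely $\perleTot_j$ and the $H$-component of $\mu^D$ returns $\mu(\perleTot_j)$, so that $(\mu^D)^{\otimes\genre}(PT(L))=\prod_{j}\mu(\perleTot_j)$ once the normalization is settled.

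The main obstacle is twofold and essentially bookkeeping-theoretic. First, along a single $\beta_i$ the $H^*$-type beads (handle arc) and $H$-type beads (planar undercrossings and pivots) are interleaved, so the Drinfeld-double product twists them through the rule $(\epsilon\otimes v)(f\otimes 1_H)=f(S^{-1}(v_{(3)})\varspadesuit v_{(1)})\otimes v_{(2)}$; I would show, using sphericity and $S(\Lambda)=\Lambda$, that this twisting regroups the beads into the factorized form needed for $\mu^D=\Lambda\otimes\mu$ and reproduces exactly the conjugations already met in the proofs of Propositions \ref{ExistF''_H} and \ref{prop:ChangOrientation}. Second, one must account for the extra local maxima created by the handle arcs and for the framing factor $\delta^{-s}$: because the handle arcs are unknotted with trivial framing, the ribbon-element contributions of the self-crossings combine with $\delta^{-s}$ to give $1$, and the spurious $g^{\pm1}$ pairs produced at the foot of each handle cancel. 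Verifying that these two effects are trivial is the delicate part; once done, $\HKR_{D(H)}(M)=\prod_i\mu(\perleTot_i)=\mathcal{K}_{\H-mod}(M)$.
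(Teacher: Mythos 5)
Your strategy is essentially the paper's: compute $\mathcal{K}_{\H-mod}$ via Theorem \ref{Th:F''KHmod} on a diagram furnished by Theorem \ref{ExDiagStand}, compute $\HKR_{D(H)}$ on the surgery link obtained by pushing the $\beta$ curves over the handles, and identify the two bead products by contracting dual bases, $\sum_k e_k^*(\Lambda_{(p)})\,e_k=\Lambda_{(p)}$; this contraction is exactly the content of the paper's Lemma \ref{MultiPerles}. One of the two obstacles you flag is not actually an obstacle: the $H^*$-type and $H$-type beads on a component $\beta_i$ are never interleaved, because all the $e_k^*\otimes 1_H$ beads sit consecutively on the single handle arc of $\beta_i$, and by placing the base point just after that arc they become the leftmost factors of the total bead. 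Only the two easy multiplication rules $(f\otimes 1_H)(f'\otimes v')=ff'\otimes v'$ and $(f\otimes v)(\epsilon\otimes v')=f\otimes vv'$ are ever needed; the general twisted product of $D(H)$, sphericity and $S(\Lambda)=\Lambda$ play no role at that step (the latter two enter only to match the antipode exponents and the ordering of the coproduct factors of $\Lambda$).

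The genuine gap is your treatment of the normalization $(\delta^D)^{-s}$. Here $s$ is the signature of the linking matrix of $L$, which is a nonzero integer for general $M$, and there is no cancellation between $\delta^{-s}$ and ``ribbon-element contributions of the self-crossings'': the self-crossings of the blackboard-framed diagram already contribute their $R$-matrix beads inside $PT(L)$, and nothing on the $\mathcal{K}_{\H-mod}$ side produces a compensating factor. What is actually needed, and what the paper isolates as Lemma \ref{lem:delta=1}, is the identity $\delta^D=\mu^D(g^D\theta^D)=1$, proved directly from the explicit structure of Part \ref{PartDoubleDrinfeld}:
$$\mu^D(g^D\theta^D)=\sum_i(\Lambda\otimes\mu)\bigl(e_i^*\otimes g\,e_i\,g\bigr)=\mu(g^2\Lambda)=\lambda(g^3\Lambda)=\epsilon(g)^3\lambda(\Lambda)=1,$$
using cyclicity of $\mu$ and the left-cointegral property $x\Lambda=\epsilon(x)\Lambda$. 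The same identity is needed even earlier in your argument, to justify that $\mu^D=\Lambda\otimes\mu$ satisfies the renormalization hypothesis $\mu^D(g^D\theta^D)\,\mu^D((g^D)^{-1}(\theta^D)^{-1})=1$ under which the formula $\HKR=\delta^{-s}\mu^{\otimes n}(PT)$ is stated. Without this lemma the proof does not close.
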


\Rq We suspect to be true the following, more general, version of this theorem, 
$$ \mathcal{K}_{\categ}(M) = \HKR_{Z(\categ)}(M) $$
with $\categ$ a spherical category and $Z(\categ)$ its Drinfeld center. 

\begin{proof}
Let $H$ be a spherical Hopf algebra and $M$ be a $3$-dimensional connected oriented closed manifold. 

By Theorem \ref{ExDiagStand}, there exists a Heegaard diagram and a surgery link representing the manifold $M$ and respecting the assumptions of that theorem (we have seen that these diagrams will have analogous forms). 

We can assume (potentially by adding $R_{II}$ moves between $\alpha$ and $\beta$ curves) that, for all $i \in \{1, ..., {\genre}\}$, the intersection $\alpha_i \cap \beta$ is non-empty. 

Therefore, on one hand, we have $D=(\Sigma_{\genre}, \alpha, \beta)$, a Heegaard diagram of $M$ along with a point $\PtInf$ and a collection of curves $\gamma$ such that $(\Sigma_{\genre}, \alpha, \beta, \PtInf, \gamma)$ is standardized, and that $\MatInter{\gamma}{\beta} = \Id_{\genre}$.
We will compute the bichrome invariant $\mathcal{K}_{\H-mod}(M)$ with this diagram. 
On the other hand, we have $L$, a surgery link of $M$ originating from the $\beta$ curves. 
We will compute the invariant $\HKR_{D(H)} (M)$ with that link.

Each component $\beta_i$ of $L$, $i \in \{1, ..., {\genre}\}$, contains a "horizontal" segment included in any neighborhood (on the plane) of the curve $\alpha_i$ (because $\MatInter{\gamma}{\beta} = \Id_{\genre}$ and $\MatInter{\gamma}{\alpha} = \Id_{\genre}$). 
Hence, we can consider that each component $\beta_i$ is oriented in such a way that this horizontal segment of $\beta_i$ will be oriented from left to right.
Indeed, if it is not the case for a given curve $\beta_i$, we can reverse its orientation without changing the image of the invariants, since $\HKR$ is invariant under orientation reversal of any curve $\beta_i$ (the same is true for $\mathcal{K}$ by Proposition \ref{prop:ChangOrientation}). 

On each component $\beta_i$, we will choose to place the base points on the right of all crossings of this horizontal strand, and before any extremum on $\beta_i$ (see the left in Figure \ref{fig:CompHeegEntrel}).

We will seek to make explicit $\HKR_{D(H)}(M)$. 
We know, by results of Part \ref{PartDoubleDrinfeld}, that we can express elements of $D(H)$ with those of $H$ and $H^*$. 
Hence, Lemma \ref{MultiPerles} will allow us to express $\HKR_{D(H)}(M)$ with elements of $H$ (and $H^*$).

\vspace{10pt}

We start by paying attention to the beads placed on crossings during the computation of the $\HKR$ invariant. 
Each crossing gives two beads that will be collected in different orders during their multiplication. 

\vspace{5pt}
\Rq We denote the $R$-matrix of $D(H)$ by $R^D = \sum_i r^D_i \otimes s^D_i$ (or by $R^D = r^D_i \otimes s^D_i$, implying the summation). 
Therefore, we will talk about an $s$ bead (resp. an $r$ bead) for beads colored by elements $s^D_i$ (resp. $S^n(r^D_i)$ with $n \in \mathbb{Z}$) of the $R$-matrix (see Figure \ref{fig:FoncPerleGPP}).  
We will also talk about $g$ beads for beads colored by $g \in H$, $g^D \in D(H)$ or their inverse. 

\vspace{5pt}

What follows can be visualized on the left side of Figure \ref{fig:CompEntrHeeg}.

We notice that $s$ beads will always follow each other on that horizontal segment, without $g$ bead in between. 
Conversely, $r$ beads will always be on "vertical" segments and can have $g$ beads between them.

For each crossing, we associate a pair of indices $(i,h) \in \mathbb{Z}^2$. 
Index $i$ is the index of the undercrossing component. 
Here, it is also the horizontal component.  
If we order the crossings from the base point, following orientation on $\beta_i$, this crossing is the $h$-th one among the undercrossings of component $\beta_i$. 
We denote by $k_i$ the number of under-crossings of component $\beta_i$ and ${\genre}$ is the total number of components (we recall that ${\genre} \in \mathbb{N}$ is also the genus of the associated Heegaard diagram of $M$ determined in Theorem \ref{ExDiagStand}). 

We will denote $s^D_{i,h}$ and $r^D_{i,h}$ the elements coloring the beads of the crossing $(i,h)$, and $d_{i,h} \in \mathbb{Z}$ the exponent of the antipode applied to $r^D_{i,h}$. 

We recall that, by definition (see Figure \ref{fig:FoncPerleGPP}), $d_{i,h}$ depends on the orientation of $\beta$ at the crossing. 
Hence, we have
\begin{equation*}
      d_{i,h} =
     \begin{cases}
     0 & \text{ if the vertical } \beta \text{ curve is downward},\\
     -1 & \text{else.}
     \end{cases}
\end{equation*}

\begin{figure}[ht]
\centering
\includegraphics[scale=0.65]{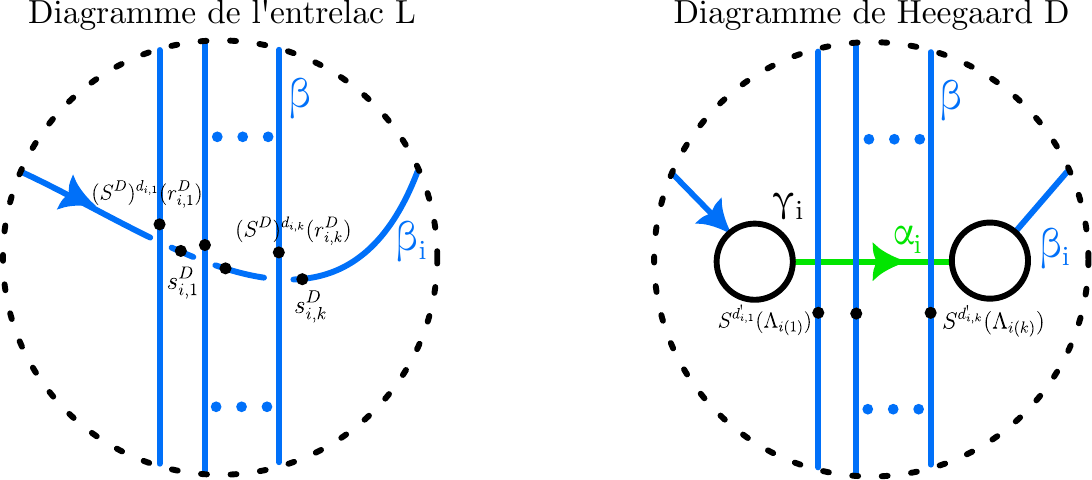}
\caption{Beads from crossings on $L$ and on $D$}\label{fig:CompEntrHeeg}
\end{figure}

\vspace{5pt}

For each crossing, we can associate a second pair of indices $(j,v) \in \mathbb{N}^2$ with $j$ the index of the overcrossing component (which here is also the vertical component), and, if we order overcrossings from the base point, following orientation on the component, this crossing is the $v$-th among the overcrossings of component $j$. 
We denote by $l_j$ the number of overcrossings of the component $j$. 
This allows us to define the map $\phi$ by $\phi(j,v) = (i,h)$. 
That map associates the component and collection order of an $r$ bead to the component and collection order of the $s$ bead from the same crossing. 
For example, in the case represented in Figure \ref{fig:ExempleEntrelacs}, we will have $\phi(1,1) = (2,1), \phi(1,2) = (1,1), \phi(2,1) = (2,3), \phi(2,2) = (2,2)$ and $\phi(2,3) = (1,2)$. 
We will sometimes denote $\phi(j,v) = (p_{j,v} , q_{j,v})$ when we will need to refer independently to the components of the pair. 

\Rq Since the studied Hopf algebras are non-commutative, we will denote 
$$ \prod^\longrightarrow_{m \in (n, ..., 1)} a_i = a_n a_{n-1} \cdots a_1 $$
to stress the order of the elements in a product. 
\vspace{5pt}

The total bead collected on each component $i$ during the computation of the $\HKR$ invariant is 
$$s^D_{i,k_i} \cdots  s^D_{i,1} \Big(\prod^\longrightarrow_{m \in (l_i, ..., 1)} (g^D)^{n_{i,m}} (S^D)^{d_{\phi(i,m)}}(r^D_{\phi(i,m)}) \Big)(g^D)^{n_{i,0}}$$
with $n_{i,m}$ the algebraic sum of the exponents of all $g$ beads between the bead colored by $r^D_{\phi(i,m)}$ and the next $r$ or $s$ bead (or between the base point and the first $r$ bead in the case of $n_{i,0}$). 
See Figures \ref{fig:ExempleDiagramme} and \ref{fig:ExempleEntrelacs} for an example illustrating this formula. 

With the beads thus collected, the formula of $\HKR$ gives 
$$\HKR_{D(H)}(M) =$$
$$ (\delta^D)^{-s} \prod_{i=1}^{{\genre}} \mu^D \Big(s^D_{i,k_i} \cdots  s^D_{i,1} \big(\prod^\longrightarrow_{m \in (l_i, ..., 1)} (g^D)^{n_{i,m}} (S^D)^{d_{\phi(i,m)}}(r^D_{\phi(i,m)}) \big)(g^D)^{n_{i,0}}\Big)$$
where $s$ is the signature of the linking matrix of the link. 
We will show in Lemma \ref{MultiPerles} that this expression can be reformulated with elements from $H$, which gives us 

$$\HKR_{D(H)}(M) = \delta^{-s} \prod_{i = 1}^{\genre} \mu \Big( \big(\prod^\longrightarrow_{m \in (l_i, ..., 1)} g^{n_{i,m}} S^{d_{\phi(i,m)}+1}(\Lambda_{p_{i,m},(q_{i,m})}) \big) g^{n_{i,0}} \Big).$$

Let us now return to the computation of $\mathcal{K}_{\H-mod}(M)$. 
During the computation of this invariant, a single bead is associated to each crossing between $\alpha_i$ and $\beta_j$ (see Figure \ref{fig:PerlesKAmod}). 

Due to the correspondence between the crossing of the Heegaard diagram $D$ and those of the surgery link $L$, we can use the map $\phi$ as seen above. 
For each crossing, the map $\phi$ associates to a pair of indices $(j,v)$ the pair $(i,h)$ with $j,i,v$ and $h$ integers defined as follows. 
Here, $j$ is the index of the component $\beta_j$ on which the bead will be placed and $i$ is the index of the component $\alpha_i$ involved in the crossing. 
If we order the beads along $\beta_j$, starting at the base point and following the orientation, then this bead is the $v$-th one collected on $\beta_j$. 
If we order the crossings on $\alpha_i$ from left to right, then this crossing is the $h$-th one among the crossings on $\alpha_i$. 

The total bead collected on each component $i$ in the Heegaard diagram $D$ is given by 
$$ g^{n'_{i,l_i}} S^{d_{\phi(i,l_i)}+1}(\Lambda_{\phi(i,l_i)}) \cdots S^{d_{\phi(i,1)}+1}(\Lambda_{\phi(i,1)}) g^{n'_{i,0}}, $$
with $d_{i,m} \in \{-1,0\}$ and ${n_{i,m}}$ the exponents previously seen for all $i \in \{1, ..., {\genre} \}$ and all $m \in \{1, ..., l_i\}$. 
Indeed, by definition of beads on crossings (see Figures \ref{fig:FoncPerleGPP} and \ref{fig:PerlesKAmod}), we note that we recover the exponents $d_{i,m}+1$ on the antipodes. 
Furthermore, since we chose analogous forms for $L$ and $D$, we know that we have the same minima, maxima, and crossings (in amount and orders) on both.
Hence, $n_{i,m}$ is also the algebraic sum of exponents on $g$ beads between the bead colored by $S^{d_{\phi(i,m)}+1}(\Lambda_{\phi(i,m)})$ (or the base point if $m=0$) and the next bead that is not a $g$ bead (or the base point if $m=l_i$).  

Therefore, by Theorem \ref{Th:F''KHmod}, we find the following formula for $\mathcal{K}_{\H-mod}(M)$. 
$$ \mathcal{K}_{\H-mod}(M) = \epsilon(\Lambda)^v \prod_{i = 1}^{\genre} \mu (\Big( \big(\prod^\longrightarrow_{m \in (l_i, ..., 1)}g^{n_{i,m}} S^{d_{\phi(i,m)}+1}(\Lambda_{\phi(i,m)}) \big)g^{n_{i,0}}\Big),$$
where $v = 0$, hence $\epsilon(\Lambda)^v =1$ because we assumed earlier that any $\alpha$ curve intersects the set $\beta$. 
See Figures \ref{fig:ExempleDiagramme} and \ref{fig:ExempleDiagrammePlat} for an example illustrating this formula.

If we apply Lemma \ref{MultiPerles} to the expressions we have for the invariants, we can conclude that 
$$  \HKR_{D(H)}(L) =  (\delta^D)^{-s} \mathcal{K}_{\H-mod}(D).$$

Hence, proving that $\delta^D=1$ (which will be done in Lemma \ref{lem:delta=1}) allows us to directly conclude the expected result $\HKR_{D(H)}(M) =  \mathcal{K}_{\H-mod}(M) $. 

To finish the proof, all that is left is to prove the two lemmas announced above.

\begin{lemme}\label{MultiPerles}
Let us consider a spherical algebra $H$ and its double $D(H)$ with the notations previously introduced and from Part \ref{PartDoubleDrinfeld}. 

We then have the equality
$$ \prod_{b = 1}^{\genre} \mu^D \Big(s^D_{i_{b,k_b}} \cdots s^D_{i_{b,1}} \big(\prod^\longrightarrow_{m \in (l_b, ..., 1)} (g^D)^{n_{b,m}}  (S^D)^{d_{\phi(b,m)}}(r^D_{i_{\phi(b,m)}}) \big) (g^D)^{n_{b,0}} \Big) $$
$$ = \prod_{b = 1}^{\genre} \mu \Big( \big(\prod^\longrightarrow_{m \in (l_b, ..., 1)} g^{n_{b,m}} S^{d_{\phi(b,m)}+1}(\Lambda_{p_{b,m},(q_{b,m})}) \big) g^{n_{b,0}} \Big), $$

with, for all $(i,j) \in \mathbb{N}^2$, $\Lambda_{p_{i,j}, (q_{i,j})}$ the $q_{i,j}$-nth factor in the coproduct $\Delta^{(l_{p_{i,j}})}(\Lambda_{p_{i,j}})$ (cointegral associated to the component ${p_{i,j}}$). 

\end{lemme}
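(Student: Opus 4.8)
The plan is to push the whole expression down into $H$ using the identity $\mu^D(f\otimes v)=f(\Lambda)\,\mu(v)$ recorded in the remark following Proposition~\ref{g_D}, and then to contract the $R$-matrix summations against the cointegral supplied by $\mu^D$. First I would simplify the total bead of each component inside $D(H)$. Recall that $g^D=\epsilon\otimes g$, $r^D_i=\epsilon\otimes e_i$, $s^D_i=e^*_i\otimes 1_H$, and that $\epsilon\otimes H$ is a Hopf subalgebra of $D(H)$ isomorphic to $H$, so that $(S^D)^{d}(\epsilon\otimes e_i)=\epsilon\otimes S^{d}(e_i)$ for every $d\in\mathbb{Z}$. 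Using the elementary products $(f\otimes 1_H)(f'\otimes 1_H)=ff'\otimes 1_H$ and $(f\otimes 1_H)(\epsilon\otimes v)=f\otimes v$ noted in Section~\ref{PartDoubleDrinfeld}, the total bead on component $b$ collapses to $F_b\otimes W_b$, where $F_b=e^*_{i_{b,k_b}}\cdots e^*_{i_{b,1}}$ is the convolution product in $H^{*,\mathrm{cop}}$ of the $s$-bead functionals and $W_b=\big(\prod^{\longrightarrow}_{m\in(l_b,\dots,1)} g^{n_{b,m}}S^{d_{\phi(b,m)}}(e_{i_{\phi(b,m)}})\big)g^{n_{b,0}}\in H$ collects the $r$- and $g$-beads.

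Applying $\mu^D$ then gives $\mu^D(F_b\otimes W_b)=F_b(\Lambda)\,\mu(W_b)$, and expanding the convolution yields $F_b(\Lambda)=e^*_{i_{b,k_b}}(\Lambda_{(1)})\cdots e^*_{i_{b,1}}(\Lambda_{(k_b)})$ with $\Delta^{(k_b)}(\Lambda)=\Lambda_{(1)}\otimes\cdots\otimes\Lambda_{(k_b)}$. The crucial step is to contract the two occurrences of each $R$-matrix index $i_{b,h}$: it appears once in $F_b(\Lambda)$ as $e^*_{i_{b,h}}(\,\cdot\,)$ and once, through $\phi$, inside some $W_j$ as $S^{d_{\phi(j,v)}}(e_{i_{b,h}})$, where $(b,h)=\phi(j,v)=(p_{j,v},q_{j,v})$. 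Before contracting I would apply the unimodular reversal identity $\Lambda_{(1)}\otimes\cdots\otimes\Lambda_{(k_b)}=S(\Lambda_{(k_b)})\otimes\cdots\otimes S(\Lambda_{(1)})$ (valid since $S(\Lambda)=\Lambda$, as used in the proof of Proposition~\ref{prop:ChangOrientation}) to rewrite $F_b(\Lambda)=\prod_{h=1}^{k_b}e^*_{i_{b,h}}\big(S(\Lambda_{(h)})\big)$, so that the $h$-th $s$-bead index is now paired with $S(\Lambda_{(h)})$.

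Summing over $i_{b,h}$ and using $\sum_i e^*_i(y)\,e_i=y$ together with the linearity of $\mu$ then replaces the factor $S^{d_{\phi(j,v)}}(e_{i_{b,h}})$ inside $W_j$ by $S^{d_{\phi(j,v)}}\!\big(S(\Lambda_{(h)})\big)=S^{d_{\phi(j,v)}+1}(\Lambda_{(h)})$. This single move produces simultaneously the extra antipode and the correct Sweedler index, matching $\Lambda_{p_{j,v},(q_{j,v})}$ on the right-hand side; here the cointegral attached to a component $p$ is split into $k_p$ pieces, which equals $l_p$ for the diagrams under consideration (as one checks on the example of Figure~\ref{fig:ExempleEntrelacs}), so that the splitting $\Delta^{(l_{p})}$ written on the right is the correct one. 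Carrying out every such contraction turns $\prod_b F_b(\Lambda)\prod_b\mu(W_b)$ into $\prod_b\mu\big(\prod^{\longrightarrow}_{m}g^{n_{b,m}}S^{d_{\phi(b,m)}+1}(\Lambda_{p_{b,m},(q_{b,m})})g^{n_{b,0}}\big)$, which is the asserted equality.

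I expect the main obstacle to be precisely this last bookkeeping: tracking which Sweedler factor of which component's cointegral reaches which $W_j$ under $\phi$, and verifying that the reversal identity supplies exactly the one extra antipode needed to turn $S^{d_{\phi(b,m)}}$ into $S^{d_{\phi(b,m)}+1}$ while flipping the order $\Lambda_{(k_b-h+1)}\leftrightarrow S(\Lambda_{(h)})$. Everything else passes through unchanged, since the pivot powers $g^{n_{b,m}}$ carry no $R$-matrix index and are untouched by the contraction, and the $s$-beads sit to the left of the $r$- and $g$-beads so that the two blocks never interleave.
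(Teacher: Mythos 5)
Your proposal is correct and follows essentially the same route as the paper's proof: collapse each total bead to $F_b\otimes W_b$ via the product rules in $D(H)$, apply $\mu^D=\Lambda\otimes\mu$, contract the dual-basis sums, and use $S(\Lambda)=\Lambda$ to produce the extra antipode and reverse the Sweedler index. The only (immaterial) difference is that you apply the reversal identity before contracting whereas the paper contracts first and then invokes $S(\Lambda)=\Lambda$.
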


\begin{proof}
Let $(e_1, ..., e_n)$ be a basis of $H$ and $(e^*_1, ..., e^*_n)$ be a dual basis of $H^{*,\mathrm{cop}}$, we recall that we may make explicit the $R$-matrix of $D(H)$ by
$$ R^D := \sum^n_{i = 1} (\epsilon \otimes e_i) \otimes (e^*_i \otimes 1_H). $$
Thus, we have
\begin{align*}
&  \prod_{b = 1}^{\genre} \mu^D \Big(s^D_{i_{b,k_b}} \cdots s^D_{i_{b,1}} \big(\prod^\longrightarrow_{m \in (l_b, ..., 1)} (g^D)^{n_{b,m}} (S^D)^{d_{\phi(b,m)}}(r^D_{i_{\phi(b,m)}}) \big)(g^D)^{n_{b,0}} \Big)  \\ 
&\overset{(1)}= \prod_{b = 1}^{\genre} (\Lambda \otimes \mu) \Big((e^*_{i_{b,k_b}} \otimes 1_H) \cdots (e^*_{i_{b,1}} \otimes 1_H) \\
&\qquad \big(\prod^\longrightarrow_{m \in (l_b, ..., 1)} (\epsilon \otimes g)^{n_{b,m}}(\epsilon \otimes S^{d_{\phi(b,m)}}(e_{i_{\phi(b,m)}})) \big) (\epsilon \otimes g)^{n_{b,0}} \Big) \\
&\overset{(2)}= \prod_{b = 1}^{\genre} (\Lambda \otimes \mu) \Big(e^*_{i_{b,k_b}} \cdots e^*_{i_{b,1}} \otimes  \big(\prod^\longrightarrow_{m \in (l_b, ..., 1)} g^{n_{b,m}}S^{d_{\phi(b,m)}}( e_{i_{\phi(b,m)}}) \big) g^{n_{b,1}} \Big) \\
&\overset{(3)}= \prod_{b = 1}^{\genre} e^*_{i_{b,k_b}}(\Lambda_{b,(1)}) \cdots e^*_{i_{b,1}}(\Lambda_{b,(k_b)})  \mu \Big( \big(\prod^\longrightarrow_{m \in (l_b, ..., 1)} g^{n_{b,m}} S^{d_{\phi(b,m)}}( e_{i_{\phi(b,m)}})\big) g^{n_{b,0}} \Big) \\
&\overset{(4)}= \prod_{b = 1}^{\genre} \mu \Big( \big(\prod^\longrightarrow_{m \in (l_b, ..., 1)} g^{n_{b,m}}S^{d_{\phi(b,m)}}( \Lambda_{p_{b,m}, (l_b-q_{b,m} +1)}) \big) g^{n_{b,0}} \Big) \\
&\overset{(5)}= \prod_{b = 1}^{\genre} \mu \Big( \big(\prod^\longrightarrow_{m \in (l_b, ..., 1)} g^{n_{b,m}}S^{d_{\phi(b,m)}+1}( \Lambda_{p_{b,m}, (q_{b,m})})\big) g^{n_{b,0}} \Big).
\end{align*}

Equality $(1)$ is obtained by expressing the elements of $D(H)$ with elements of $H$ according to the formulas from Part \ref{PartDoubleDrinfeld}. 

Due to the expression for multiplication in the double, for all $f, f' \in H^{*,\mathrm{cop}}$, and all $x, x' \in H$, we have 
$$ (f \otimes x)(f'\otimes x') = ff' \otimes xx' \qquad \text{ if } x = 1_H \text{ or } f'= \epsilon. $$
Which gives equality $(2)$.  

Equality $(3)$ is the application of $\Lambda \otimes \mu$ to elements of $D(H)$. 
We distinguish here $g$ copies of $\Lambda$, a copy $\Lambda_i$ being associated to each component $\beta_i$. 

By a property of the dual basis, for all $x \in H$, and $f \in H^*$ then $\sum_i e_i^*(x) f(e_i) = f(x)$, which gives equality $(4)$. 

Equality $(5)$ is obtained by the relation $S(\Lambda) = \Lambda$. 

\end{proof}

\begin{lemme}\label{lem:delta=1}
Let $H$ be a spherical Hopf algebra with pivot $g$ and $D(H)$ be its Drinfeld double with the notations introduced in Part \ref{PartDoubleDrinfeld}. 
Then $\delta^D := \mu^D(g^D \theta^D) = 1$, with $\theta^D := \sum_i s^D_i g^D r^D_i$. 
\end{lemme}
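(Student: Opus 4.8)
The plan is to compute $\delta^D = \mu^D(g^D\theta^D)$ directly by unwinding all of the doubled data into elements of $H$ and $H^*$, in the same spirit as Lemma \ref{MultiPerles}. First I would make $\theta^D$ explicit. Using the $R$-matrix $R^D = \sum_i (\epsilon \otimes e_i) \otimes (e^*_i \otimes 1_H)$, so that $r^D_i = \epsilon \otimes e_i$ and $s^D_i = e^*_i \otimes 1_H$, together with the two reduced product rules $(f \otimes 1_H)(f' \otimes v') = ff' \otimes v'$ and $(f \otimes v)(\epsilon \otimes v') = f \otimes vv'$, one computes
$$ \theta^D = \sum_i s^D_i\, g^D\, r^D_i = \sum_i (e^*_i \otimes 1_H)(\epsilon \otimes g)(\epsilon \otimes e_i) = \sum_i e^*_i \otimes g e_i. $$

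Next I would multiply on the left by $g^D = \epsilon \otimes g$. Since neither factor fits the two reduced rules, I would use the general product $M^D$, simplified by the fact that $g$ is group-like, so $g_{(1)} = g_{(2)} = g_{(3)} = g$ and $S^{-1}(g) = g^{-1}$. This yields
$$ g^D \theta^D = \sum_i \big( e^*_i(g^{-1}\, \varspadesuit\, g) \big) \otimes g^2 e_i, $$
where $e^*_i(g^{-1}\,\varspadesuit\, g)$ denotes the functional $x \mapsto e^*_i(g^{-1} x g)$. Applying $\mu^D = \Lambda \otimes \mu$ and pairing the first tensor factor against $\Lambda$ gives $\delta^D = \sum_i e^*_i(g^{-1}\Lambda g)\,\mu(g^2 e_i)$, and the dual-basis identity $\sum_i e^*_i(y)\,\phi(e_i) = \phi(y)$ (used exactly as in step $(4)$ of Lemma \ref{MultiPerles}) collapses this to
$$ \delta^D = \mu\big(g^2 \cdot g^{-1}\Lambda g\big) = \mu(g \Lambda g). $$

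Finally I would simplify $\mu(g\Lambda g)$ using only the spherical and unimodular structure of $H$. Since $\Lambda$ is a left cointegral and $\epsilon(g) = 1$, we have $g\Lambda = \epsilon(g)\Lambda = \Lambda$, hence $g\Lambda g = \Lambda g$. The trace property $\mu(xy) = \mu(yx)$ of the symmetrized integral then gives $\mu(\Lambda g) = \mu(g\Lambda) = \mu(\Lambda)$, and unwinding $\mu = \lambda(g\,\varspadesuit)$ together with $g\Lambda = \Lambda$ and the normalization $\lambda(\Lambda) = 1$ yields $\mu(\Lambda) = \lambda(g\Lambda) = \lambda(\Lambda) = 1$. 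Thus $\delta^D = 1$.

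The step I expect to be the main obstacle is the explicit evaluation of $g^D\theta^D$ through the general product $M^D$: one must carefully track which Sweedler factors of the group-like $g$ appear in the conjugation $S^{-1}(v_{(3)})\,\varspadesuit\, v_{(1)}$ and verify that taking the product with the unit $\epsilon$ of $H^{*\mathrm{cop}}$ leaves the functional $x \mapsto e^*_i(g^{-1}xg)$ unchanged, so that the conjugation direction ($g^{-1}\,\varspadesuit\, g$ rather than $g\,\varspadesuit\, g^{-1}$) comes out correctly. Once this conjugation is pinned down, the remaining manipulations are immediate consequences of the cointegral identity and the trace property already established for $\mu$.
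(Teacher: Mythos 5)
Your proof is correct and follows essentially the same route as the paper: unwind $\theta^D$ via the explicit $R$-matrix, apply $\mu^D=\Lambda\otimes\mu$, collapse the sum with the dual-basis identity, and finish with the cointegral and trace properties of $\mu$. The only difference is that the paper avoids the general product $M^D$ by first cyclically moving the leading $g^D$ to the end via $\mu^D(xy)=\mu^D(yx)$, so that only the reduced rule $(f\otimes v)(\epsilon\otimes v')=f\otimes vv'$ is needed and one gets $\mu(g^2\Lambda)$ directly, whereas you evaluate $g^D\theta^D$ through $M^D$ using the group-likeness of $g$ --- both correctly land on $\mu(g\Lambda g)=\mu(g^2\Lambda)=1$.
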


\begin{proof}
Since $H$ is spherical, we can apply Proposition \ref{g_D} and express elements of $D(H)$ with elements of $H$. 
\begin{align*}
\delta^D & =  \mu^D(g^D \theta^D) =  \mu^D(g^D \sum_i s_i^D g^D r_i^D) = \sum_i \mu^D(g^D s_i^D g^D r_i^D) &&\\
	& = \sum_i (\Lambda \otimes \mu) ((e_i^* \otimes 1_H) (\epsilon \otimes g )(\epsilon \otimes e_i) (\epsilon \otimes g)) &&\\
	& =  \sum_i (\Lambda \otimes \mu) (e_i^* \otimes g e_i g) &&\\
	& = \sum_i e_i^*(\Lambda) \otimes \mu (g e_i g) &&\\
	& = \mu (g^2 \Lambda) = \epsilon(g)^3 \lambda(\Lambda) = 1 &&
\end{align*}
\end{proof}

\Rq Here it is essential for $D(H)$ to be ribbon and unimodular (because quasitriangular and spherical) to be able to express $g^D$ with $g$.  

 \vspace{5pt}
This concludes the proof of Theorem \ref{Th:principal}.

\end{proof}

\textit{Example : }

\begin{figure}[ht]
\centering
\includegraphics[scale=0.12]{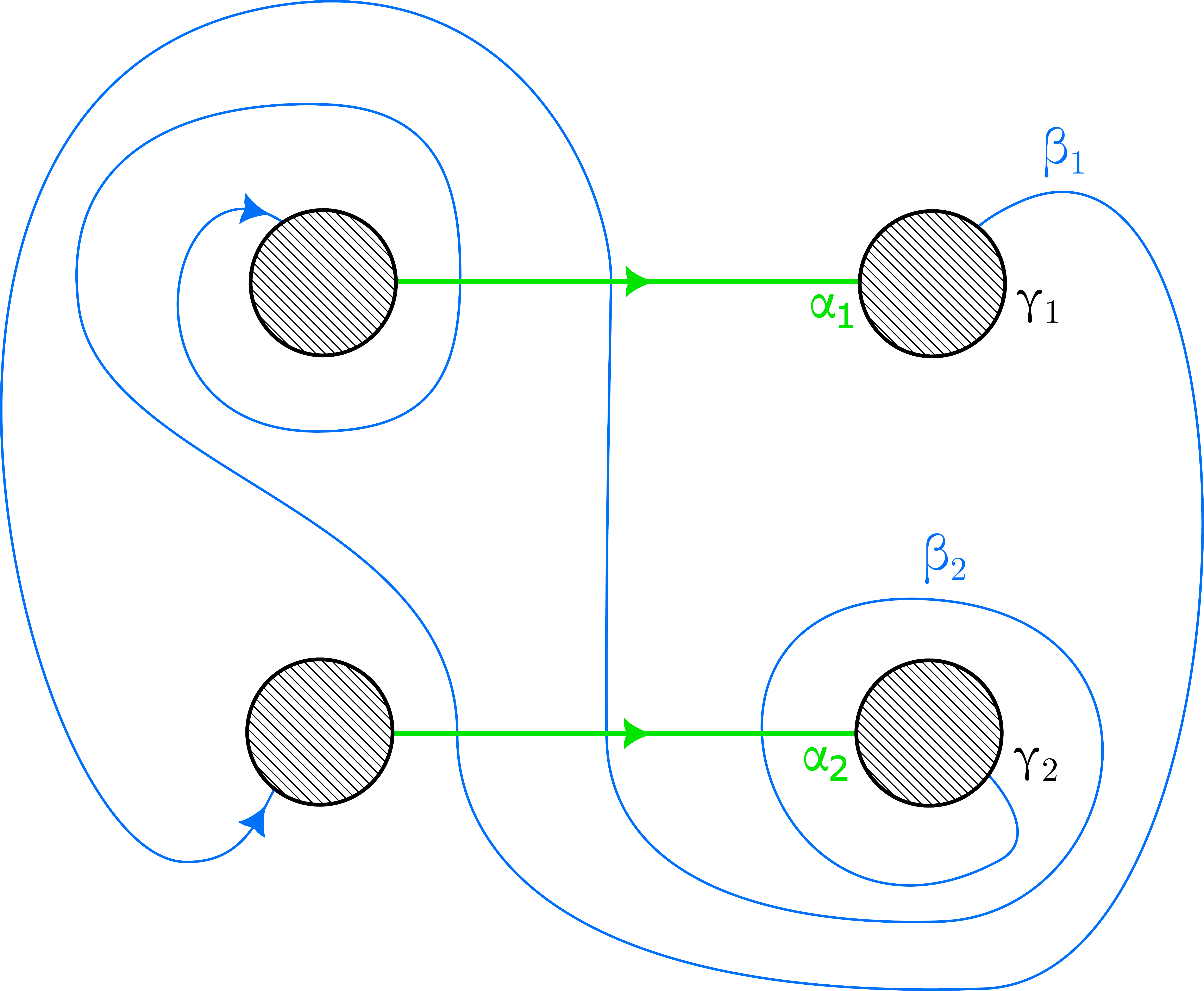}
\caption{Heegaard diagram of the manifold $\mathbb{S}^3$}\label{fig:ExempleDiagramme}
\end{figure}

Applied to the diagram in Figure \ref{fig:ExempleDiagramme}, the process described in part \ref{Part:CalcHenn} to compute the invariant $\HKR_{D(H)}$ (see Figure \ref{fig:ExempleEntrelacs}) gives the scalar 
\begin{align*}
S_{\HKR} =& \mu^D(s^D_{1,2} s^D_{1,1} g^D r^D_{1,1} g^D (S^D)^{-1}(r^D_{2,1}) )\\
& \mu(s^D_{2,3} s^D_{2,2} s^D_{2,1} (S^D)^{-1}(r^D_{1,2}) (S^D)^{-1}(r^D_{2,2}) g^D (S^D)^{-1}(r^D_{2,3})) \qquad \in \korps.
\end{align*}

\begin{figure}[ht]
\centering
\includegraphics[scale=0.11]{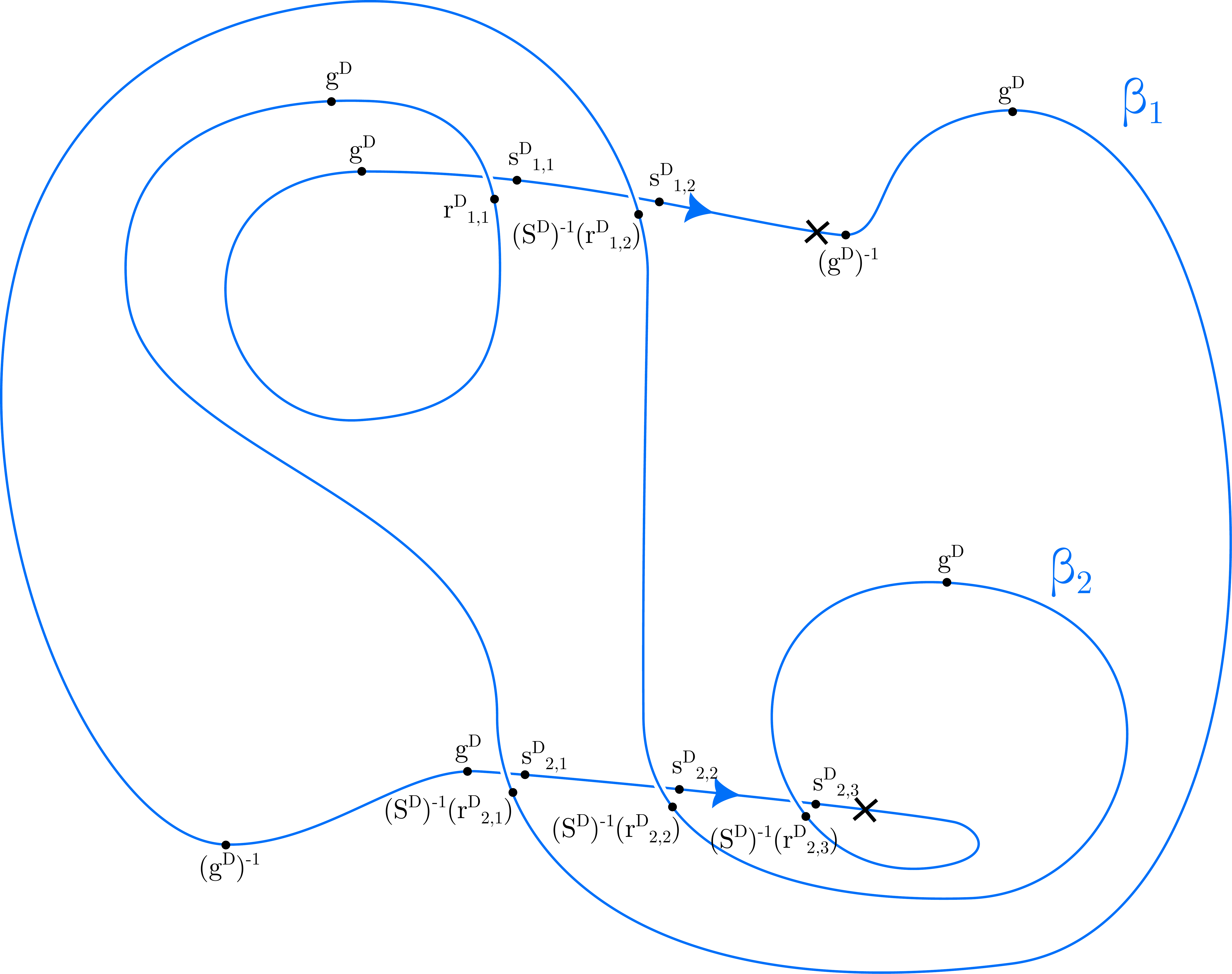}
\caption{Surgery link associated to the virtual diagram represented in Figure \ref{fig:ExempleDiagrammePlat}}\label{fig:ExempleEntrelacs}
\end{figure}

Applied to the diagram in Figure \ref{fig:ExempleDiagramme}, the process described in part \ref{Part:F''} to compute the invariant $\mathcal{K}_{\H-mod}$ (see Figure \ref{fig:ExempleDiagrammePlat}) gives the scalar
$$ S_\mathcal{K} = \mu(g S(\Lambda_{1,(1)}) g \Lambda_{2,(1)}) \mu(\Lambda_{1,(2)} \Lambda_{2,(2)} g \Lambda_{2,(3)}) \qquad \in \korps.$$

\begin{figure}[ht]
\centering
\includegraphics[scale=0.11]{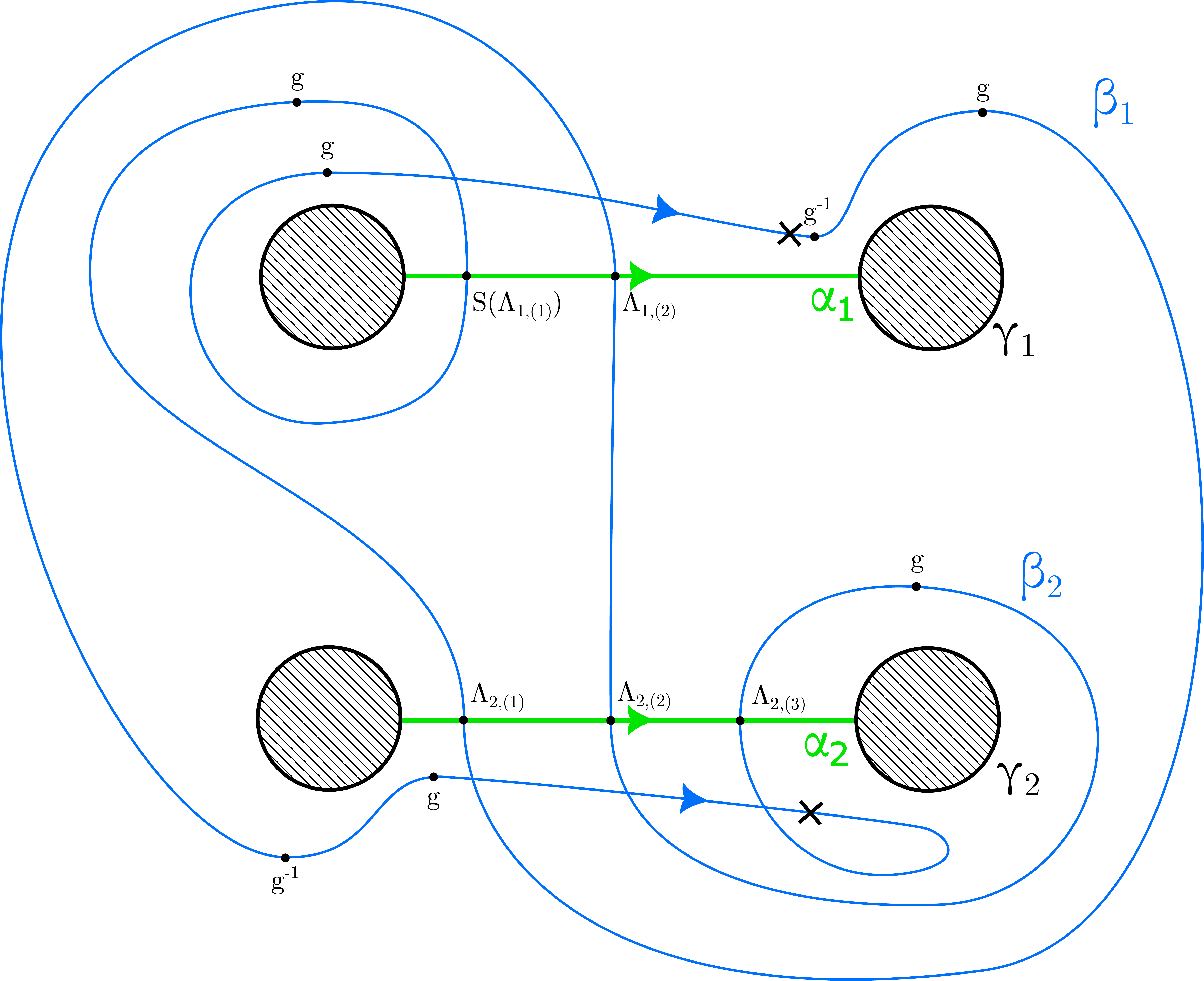}
\caption{Flattening of the diagram represented in Figure \ref{fig:ExempleDiagramme}}\label{fig:ExempleDiagrammePlat}
\end{figure}

\clearpage

\bibliography{Biblio}

\bibliographystyle{alpha}

\end{document}